\newcommand{\notehelper}[3]{\textcolor{#3}{$\blacksquare$}\marginpar{\ifodd\thepage\raggedright\else\raggedleft\fi\color{#3}\tiny \textbf{#2:} #1}}
\DeclareMathOperator{\gl}{gl}
\DeclareMathOperator{\Stk}{Stk}
\DeclareMathOperator{\DiffStk}{DiffStk}
\DeclareMathOperator{\SepStk}{SepStk}
\DeclareMathOperator{\EuclStk}{EuclStk}
\DeclareMathOperator{\Orbfld}{Orbfld}
\DeclareMathOperator{\Mfld}{Mfld}
\DeclareMathOperator{\Eucl}{Eucl}
\DeclareMathOperator{\htp}{htp}
\newcommand{\Piinfty}{\Pi_{\infty}}
\newcommand{\Deltaalg}[1]{\Delta^{#1}_{\mathrm{alg}}}
\DeclareMathOperator{\Open}{Open}
\DeclareMathOperator{\open}{open}
\DeclareMathOperator{\Hom}{Hom}
\DeclareMathOperator{\iHom}{\underline{\operatorname{Hom}}}
\DeclareMathOperator{\Fun}{Fun}
\DeclareMathOperator{\fgt}{fgt}
\DeclareMathOperator{\id}{id}
\DeclareMathOperator{\pr}{pr}
\DeclareMathOperator{\const}{const}
\DeclareMathOperator{\colim}{colim}
\let\lim\relax
\DeclareMathOperator{\lim}{lim}
\newcommand{\ev}{\textup{ev}}
\DeclareMathOperator{\Shv}{Shv}
\DeclareMathOperator{\PSh}{PSh}
\DeclareMathOperator{\pt}{pt}
\newcommand{\catop}{^{\textup{op}}}
\newcommand{\catname}[1]{{\textup{#1}}}
\newcommand{\Spc}{\catname{Spc}}
\newcommand{\Sp}{\catname{Sp}}
\newcommand{\Top}{\catname{Top}}
\newcommand{\Grpd}{\catname{Grpd}}
\newcommand{\Fin}{\catname{Fin}}
\newcommand{\Cat}{\catname{Cat}}
\newcommand{\PrL}{\textup{Pr}^{\textup{L}}}
\newcommand{\Orb}{\textup{Orb}}
\newcommand{\Glo}{\textup{Glo}}
\newcommand{\qin}{\quad\in\quad}
\newcommand{\abs}[1]{\lvert #1 \rvert}
\let\op\relax
\DeclareMathOperator{\op}{op}
\DeclareMathOperator{\N}{\mathbb N}
\DeclareMathOperator{\R}{\mathbb R}
\renewcommand{\phi}{\varphi}
\renewcommand{\epsilon}{\varepsilon}
\newcommand{\Cc}{\mathcal{C}}
\newcommand{\Dd}{\mathcal{D}}
\newcommand{\Ff}{\mathcal{F}}
\newcommand{\Gg}{\mathcal{G}}
\newcommand{\Uu}{\mathcal{U}}
\newcommand{\Xx}{\mathcal{X}}
\newcommand{\Yy}{\mathcal{Y}}
\newcommand{\bH}{\mathbf{H}}
\newcommand{\bbB}{\mathbb{B}}
\newcommand{\bbR}{\mathbb{R}}
\theoremstyle{plain}
\newtheorem{theorem}{Theorem}[section]
\newtheorem{corollary}[theorem]{Corollary}
\newtheorem{lemma}[theorem]{Lemma}
\newtheorem{proposition}[theorem]{Proposition}
\theoremstyle{definition}
\newtheorem{construction}[theorem]{Construction}
\newtheorem{convention}[theorem]{Convention}
\newtheorem{definition}[theorem]{Definition}
\newtheorem{notation}[theorem]{Notation}
\newtheorem{remark}[theorem]{Remark}
\newtheorem*{remark*}{Remark}
\theoremstyle{plain}
\newtheorem*{theorem*}{Main Theorem}
\newtheorem*{corollary*}{Corollary}
\crefname{construction}{construction}{constructions}
\crefname{convention}{convention}{conventions}
\crefname{corollary}{corollary}{corollaries}
\crefname{definition}{definition}{definitions}
\crefname{lemma}{lemma}{lemmas}
\crefname{notation}{notation}{notations}
\crefname{proposition}{proposition}{propositions}
\crefname{remark}{remark}{remarks}
\crefname{theorem}{theorem}{theorems}
\Crefname{construction}{Construction}{Constructions}
\Crefname{convention}{Convention}{Conventions}
\Crefname{corollary}{Corollary}{Corollaries}
\Crefname{definition}{Definition}{Definitions}
\Crefname{lemma}{Lemma}{Lemmas}
\Crefname{notation}{Notation}{Notations}
\Crefname{proposition}{Proposition}{Propositions}
\Crefname{remark}{Remark}{Remarks}
\Crefname{theorem}{Theorem}{Theorems}
\newcommand{\iso}{\xrightarrow{\;\smash{\raisebox{-0.5ex}{\ensuremath{\scriptstyle\sim}}}\;}}
\newcommand{\isol}{\xleftarrow{\;\smash{\raisebox{-0.5ex}{\ensuremath{\scriptstyle\sim}}}\;}}
\newcommand{\quot}{/\!\!/}
\newcommand{\Addresses}{{
		\bigskip
		\footnotesize
		
		A.~Clough, \textsc{NYU Abu Dhabi, Center for Quantum and Topological Systems, Abu Dhabi, Abu Dhabi, AE}\par\nopagebreak
		\textit{E-mail address:} \texttt{adrian.clough@gmail.com}
		
		\medskip
		
		B.~Cnossen, \textsc{University of Regensburg, Department of Mathematics, Universitätsstraße 31, Regensburg, DE 93053}\par\nopagebreak
		\textit{E-mail address:} \texttt{bastiaan.cnossen@ur.de}
		
		\medskip
		
		S.~Linskens (Corresponding author), \textsc{University of Regensburg, Department of Mathematics, Universitätsstraße 31, Regensburg, DE 93040}\par\nopagebreak
		\textit{E-mail address:} \texttt{sil.linskens@ur.de}
		
}}
\title{Global spaces and the homotopy theory of stacks}
\author{Adrian Clough\thanks{
		The author acknowledges support by \emph{Tamkeen} under \emph{NYUAD Research Institute grant} \texttt{CG008}.
	}, Bastiaan Cnossen, Sil Linskens}
\date{\today}
\begin{document}
	\maketitle

	\begin{abstract}
		We show that the $\infty$-category of global spaces is equivalent to the homotopy localization of the $\infty$-category of sheaves on the site of separated differentiable stacks, following a philosophy proposed by Gepner-Henriques. We further prove that this $\infty$-category of sheaves is a cohesive $\infty$-topos and that it fully faithfully contains the singular-cohesive $\infty$-topos of Sati-Schreiber.
	\end{abstract}
	
	\tableofcontents
	
	\section{Introduction}
	Cohomology theories defined in geometric contexts often admit natural equivariant extensions. For example, if $G$ is a suitable group object acting on an object $X$ (e.g.\ a scheme, complex analytic space, or smooth manifold), an equivariant version of K-theory may be obtained by considering $G$-equivariant vector bundles on $X$. Two features of such theories will be important to us. First, they are \emph{homotopy invariant}: equivariant K-theory, for instance, is unchanged under $G$-homotopies. Second, they are defined uniformly in the group $G$, hence belong to the field of \emph{global homotopy theory} \cite{schwede2018global}; equivalently, the value of such theory on the object $X$ only depends on the quotient stack $X \quot G$. For example, by identifying $G$-equivariant vector bundles on $X$ with vector bundles on the quotient stack $X \quot G$, one obtains an equivalence $K_G(X) \simeq K(X \quot G)$ (\cite{AdemRuan2003Twisted, TXLG_Twisted_Ktheory}; see also \cite{Toen1999RR, VezzosiVistoli2002KTheory}). The study of such theories on stacks is motivated in part by their appearance in mathematical physics, notably in equivariant elliptic cohomology \cite{BE2024Survey}, and in symplectic geometry, through Floer homotopy theory \cite{AB2024Floer}.
	
	In this article we study the \emph{homotopy theory of stacks}, defined as the universal home of such cohomology theories, and relate it to other suggestions for the homotopy theory of stacks in the literature \cite{GepnerHenriques2007Orbispaces,satiSchreiber2020proper, schwede2018global}. We follow the approach of Morel–Voevodsky \cite{MorelVoevodsky1999Motivic}, who obtained a homotopy theory of schemes by localizing a sheaf category on a site of schemes at the projection maps $X \times \mathbb{A}^1 \to X$. In our situation, we replace schemes by \emph{separated differentiable stacks}: sheaves of groupoids $\Xx$ on the site of smooth manifolds which admit a smooth atlas by a manifold and whose diagonal $\Xx \to \Xx \times \Xx$ is representably proper. The resulting $(2,1)$-category $\SepStk$ admits a Grothendieck topology given by open coverings, and we define the homotopy theory of stacks as the localization of the sheaf category of $\SepStk$ at the projections $\Xx \times \R \to \Xx$.

	Previous results in the literature have identified analogous homotopy localizations in topology:
	\begin{itemize}
		\item Let $\Mfld$ denote the category of manifolds, equipped with the open covering topology. Dugger argued in \cite[Prop.~8.3]{Dugger2001Universal} that the localization of the sheaf category $\Shv(\Mfld)$ at the projection maps $\{M \times \R \to M\}_{M \in \Mfld}$ is equivalent to the category $\Spc$ of spaces. The localization functor $\Piinfty\colon \Shv(\Mfld) \to \Spc$ sends a manifold to its singular homotopy type.
		\item For a compact Lie group $G$, let $\Mfld_G$ denote the category of (smooth) $G$-manifolds, equipped with the equivariant open covering topology. The first-named author showed in \cite[Th.~4.4.16]{Cnossen2023PhD} that the localization of $\Shv(\Mfld_G)$ at the projection maps $\{M \times \R \to M\}_{M \in \Mfld}$ is equivalent to the presheaf category $\PSh(\Orb_G)$ of the category $\Orb_G$ of $G$-orbits. In light of Elmendorf's theorem \cite{elmendorf}, this is equivalent to the category $\Spc_G$ of $G$-spaces, and the resulting localization functor $\Piinfty^{G}\colon\Shv(\Mfld_G) \to \Spc_G$ sends a $G$-manifold to its equivariant homotopy type.
	\end{itemize}

	In the case of $\Shv(\SepStk)$, the analogue of the orbit category $\Orb_G$ is the \emph{global indexing category} $\Glo$, introduced by Gepner and Henriques \cite{GepnerHenriques2007Orbispaces} in their study of the homotopy theory of topological stacks.\footnote{We follow the notation and terminology of \cite{rezk2014global}; Gepner and Henriques denoted this category by $\Orb$.} The objects of $\Glo$ are arbitrary compact Lie groups, while the space of morphisms between $\bbB G$ and $\bbB H$ is the homotopy type of the mapping stack between them, i.e.\ $\Piinfty\iHom_{\Shv(\Mfld)}(\bbB G,\bbB H)$. We may think of $\Glo$ as uniformly encoding all the orbit categories $\Orb_G$ simultaneously: sending an orbit $G/H$ to the associated map of classifying stacks $\bbB H \to \bbB G$ induces a fully faithful functor $\Orb_G \hookrightarrow \Glo_{/\bbB G}$. Presheaves on $\Glo$ are called \emph{global spaces}. In the spirit of Elmendorf's theorem, the value at $\bbB G$ may be thought of as the $G$-isotropy space of the global space. The relation between global spaces and equivariant homotopy theory was studied systematically by Rezk in \cite{rezk2014global}.
	
	Our main result shows that the homotopy localization of sheaves on separated stacks is precisely the category of global spaces:

	\begin{theorem*}[\ref{thm:MainTheorem}]
		\label[theorem]{introthm:MainTheorem}
		There exists a global homotopy type functor
		\[
			\Piinfty^{\Glo}\colon\Shv(\SepStk)\to\PSh(\Glo)	
		\]
		which exhibits its target as the localization of its source at the projections $\Xx\times\R\to\Xx$ for $\Xx \in \SepStk$. For a separated stack $\Xx$, its global homotopy type $\Piinfty^{\Glo}(\Xx)$ is given on objects by
		\[
			\Piinfty^{\Glo}(\Xx)(\bbB G)\simeq \Piinfty\iHom_{\Shv(\Mfld)}(\bbB G,\Xx).
		\]
	\end{theorem*}

	By taking coefficients in spectra, the theorem provides an equivalence
	\[
		\Shv^{\htp}(\SepStk;\Sp) \iso \PSh(\Glo;\Sp)
	\]
	between spectrum-valued homotopy invariant sheaves on separated stacks and spectrum-valued presheaves on $\Glo$; in particular, each such presheaf gives rise to a ``naive'' cohomology theory on stacks. It would be interesting and fruitful to extend this comparison to ``genuine'' cohomology theories on stacks (which are as of yet undefined) and \emph{global spectra} in the sense of \cite{schwede2018global}. We hope to return to this question in future work.
	
	We emphasize that the open covering topology on $\SepStk$ is \emph{coarser} than the effective epimorphism topology that is often considered in the literature. To illustrate this, consider the circle $S^1$ equipped with the $C_2$-action given by flipping across an axis. The $C_2$-equivariant open cover of $S^1$ by its two hemispheres induces a covering in $\SepStk$ of the quotient stack $S^1 \quot C_2$ by two copies of $\bbR \quot C_2$. In contrast, the quotient map $S^1 \to S^1 \quot C_2$ is \emph{not} a covering in $\SepStk$. More generally, the open covering topology does not allow any gluings which introduce new isotropy groups. 
	
	The open covering topology was already considered implicitly by Satake and Thurston when they defined orbifolds as generalized manifolds glued from quotient stacks $\bbR^n \quot G$, for linear representations of finite groups (see \cite{Satake1956Orbifold,Thurston1978ThreeManifolds}). We may loosely think of separated stacks as generalized orbifolds in which the isotropy groups may be arbitrary compact Lie groups. Indeed, every separated stack admits an open cover by Euclidean quotient stacks $\bbR^n \quot G$ for linear representations of compact Lie groups (see \Cref{prop:Linearization_Theorem} for a recollection). The full subcategory of $\SepStk$ spanned by separated stacks with finite isotropy groups (or equivalently, those covered by linear representations of finite groups) is equivalent to the $(2,1)$-category $\Orbfld$ of orbifolds.

	The preceding paragraph motivates the following generalization of our main result. Given a collection $\Ff$ of (isomorphism classes of) compact Lie groups, denote by $\SepStk_{\Ff} \subseteq \SepStk$ the full subcategory on those separated stacks $\Xx$ whose isotropy groups are contained in $\Ff$, and denote by $\Glo_{\Ff} \subseteq \Glo$ the full subcategory spanned by the objects $\bbB G$ for $G \in \Ff$.

	\begin{corollary*}[\ref{cor:Main_Result_For_Families}]
		The functor $\Piinfty$ restricts to a functor $\Shv(\SepStk_{\Ff}) \to \PSh(\Glo_{\Ff})$ which exhibits its target as the homotopy localization of its source.
	\end{corollary*}
	
	Taking $\Ff$ to be the class of finite groups, this identifies the homotopy theory of orbifolds with the category of so-called ``$\Fin$-global spaces''.

	For the class of finite groups, Sati and Schreiber \cite[\textsection 3.2]{satiSchreiber2020proper}, \cite[\textsection 3.3]{satiSchreiber2021equivariantPublished} pursue a different approach to combining differential geometry and equivariant homotopy theory. They study the \emph{singular cohesive} $\infty$-topos
	\[
		\bH = \Shv(\Glo_{\Fin}\times\Eucl) \simeq \PSh(\Glo_{\Fin})\otimes \Shv(\Mfld)\simeq \Fun(\Glo_{\Fin}^{\op},\Shv(\Mfld)),
	\]
	where $\Eucl$ denotes the category of smooth manifolds diffeomorphic to $\bbR^n$. This $\infty$-topos is cohesive over both $\PSh(\Glo_{\Fin})$ and $\Shv(\Mfld)$, with several commuting modalities. This feature underlies their synthetic axiomatics of orbifold geometry and is described in their broader program as a key ingredient in an anticipated construction of orbifold/equivariant nonabelian differential cohomology via Chern characters. By contrast, the $\infty$-topos $\Shv(\SepStk_\Ff)$ exhibits a different cohesive behavior (see \cref{subsec:cohesion}), which we expect to play a role in differential orbifold cohomology theory in the style of \cite{bunke2016SheavesOfSpectra} (i.e., using differential hexagons). We provide a comparison between $\bH$ and $\Shv(\SepStk_{\Fin})$ in \Cref{subsec:Cohesion_for_orbifolds}.
	
	\subsection*{Structure of the article}
	We begin in Subsection~\ref{subsec:SepStk} by recalling the definitions of separated stacks and associated notion of sheaves. In Subsection~\ref{subsec:htpinv} we recall the definition of homotopy invariant sheaves on $\SepStk$ and show that the resulting $\infty$-topos $\Shv^{\htp}(\SepStk)$ is equivalent to a presheaf $\infty$-topos on an $\infty$-category which we denote by $\Glo^{\Stk}$. In Subsection \ref{sec:global-spaces} we prove that the $\infty$-category $\Glo^{\Stk}$ is equivalent to the global indexing category $\Glo$ as traditionally defined in the literature, which proves our main theorem. We explain in \ref{subsec:isotropy} how to deduce the analogous result for separated stacks with isotropy groups in a specified family $\Ff$.
	
	In Section~\ref{subsec:cohesion} we then apply the previous results to show that $\Shv(\SepStk_{\Ff})$ is cohesive, in the sense of Schreiber \cite{schreiber2013differential}. In the case where $\Ff$ is a collection of finite groups, we improve the previous result by showing that $\Shv(\Orbfld_{\Ff})$ is already cohesive relative to the base $\infty$-topos $\PSh(\Glo_{\Ff})$. We also relate $\Shv(\Orbfld)$ to the singular-cohesive topos $\bH$ of Sati--Schreiber \cite{satiSchreiber2020proper}.
	
	\subsection*{Acknowledgments}
	We thank the anonymous referee for a careful reading of our article and various helpful suggestions. We further thank Daniele Velati for pointing out an oversight in our original proof of \Cref{prop:Homotopy_Invariant_Presheaf_On_EuclStk_Is_Sheaf}.
	
	\section{The homotopy theory of stacks}\label{sec:SepStk}
	In this section we prove our main theorem by identifying the homotopy localization of $\Shv(\SepStk)$ with the category of global spaces $\PSh(\Glo)$.
	
	\subsection{Separated stacks}\label{subsec:SepStk}
	We establish some of our conventions on notation and terminology regarding stacks.
	
	\begin{notation}
		We let $\Mfld$ denote the category of smooth manifolds, and regard $\Mfld$ as a Grothendieck site by equipping it with the \textit{open covering topology} in which the covering families are precisely the open covers $\{U_i \hookrightarrow M\}_{i \in I}$. We denote by $\Shv(\Mfld;\Grpd)$ the (2,1)-category of sheaves of groupoids with respect to this Grothendieck topology, and refer to its objects as \textit{stacks on $\Mfld$}. Note that the Yoneda embedding induces a fully faithful functor $\Mfld \hookrightarrow \Shv(\Mfld;\Grpd)$; we will abuse notation by identifying a manifold with its associated stack on $\Mfld$.
	\end{notation}
	
	\begin{definition}[{Differentiable stack, \cite[Definition~2.15]{BehrendXu2011Stacks}}]
		A stack $\Xx \in \Shv(\Mfld;\Grpd)$ is called a \textit{differentiable stack} if there exists a smooth manifold $M$ equipped with a morphism $p\colon M \twoheadrightarrow \Xx$ of stacks on $\Mfld$ satisfying the following two properties:
		\begin{enumerate}[(1)]
			\item The morphism $p$ is an \textit{effective epimorphism}, which equivalently means that for any smooth manifold $N$ and any morphism of stacks $f\colon N \to \Xx$ there exists an open cover $\{\iota_i\colon U_i \hookrightarrow N\}_{i \in I}$ of $N$ such that each composite $f \circ \iota_i\colon U_i \to \Xx$ factors through $p\colon M \to \Xx$;
			\item The morphism $p$ is a \textit{representable submersion}, which means that for any smooth manifold $N$ and any morphism of stacks $f\colon N \to \Xx$ the fiber product $M \times_{\Xx} N$ of stacks is (representable by) a smooth manifold, and the projection map $M \times_{\Xx} N \to N$ is a smooth submersion of smooth manifolds.
		\end{enumerate}
		A map $p\colon M \twoheadrightarrow \Xx$ satisfying properties (1) and (2) is also called a \textit{smooth atlas} of $\Xx$. We denote by $\DiffStk \subseteq \Shv(\Mfld;\Grpd)$ the full subcategory spanned by the differentiable stacks.
	\end{definition}

	Clearly every smooth manifold $M$ is a differentiable stack, with atlas given by the identity map $\id_M \colon M \to M$. More interestingly, also every Lie groupoid $\Gg = (\Gg_1 \rightrightarrows \Gg_0)$ gives rise to a differentiable stack $\bbB \Gg$: for a smooth manifold $N$ we define the groupoid $\bbB \Gg(N)$ as the groupoid of principal $\Gg$-bundles on $N$. One can show that the map $\Gg_0 \to \bbB \Gg$ classifying the canonical principal $\Gg$-bundle on $\Gg_0$ is a smooth atlas for $\bbB \Gg$; we refer to \cite[Section~3.2]{GepnerHenriques2007Orbispaces} for a discussion in the topological setting. In particular, every smooth manifold $M$ equipped with a smooth action by a Lie group $G$ gives rise to a \textit{quotient stack} $M\quot G$, obtained by applying the above construction to the action groupoid $G \ltimes M = (G \times M \rightrightarrows M)$. Taking the manifold to be the point equipped with the trivial action, we obtain for every Lie group $G$ a differentiable stack $\bbB G = \pt \quot G$, called the \textit{classifying stack of $G$}, so named because it classifies (i.e.\ represents) the groupoid of principal $G$-bundles.
	
	\begin{definition}[{Representable morphism, \cite[Section~2.3]{BehrendXu2011Stacks}}]
		Let $f\colon \Xx \to \Yy$ be a morphism of differentiable stacks.
		\begin{enumerate}[(1)]
			\item The morphism $f$ is called \textit{representable} if for every smooth atlas $M \twoheadrightarrow \Yy$ the fiber product $M \times_{\Yy} \Xx$ of stacks is (representable by) a smooth manifold $N$. (Note that the map $N \twoheadrightarrow \Xx$ is then a smooth atlas of $\Xx$.)
			\item The morphism $f$ is called an \textit{open embedding} (resp.\ \textit{proper}) if it is representable and the induced smooth map $N \to M$ from (1) is an open embedding (resp.\ a proper map).
		\end{enumerate}
	\end{definition}
	
	The open embeddings lead to a natural Grothendieck topology on $\DiffStk$:
	
	\begin{definition}[Open covering topology]	\label[definition]{def:Open_cover_topology}
		Given a differentiable stack $\Xx$, an \textit{open covering of $\Xx$} is a collection $\{\iota_i\colon \Uu_i \hookrightarrow \Xx\}_{i \in I}$ of open embeddings of differentiable stacks such that the map $(\iota_i) \colon \bigsqcup_{i \in I} \Uu_i \twoheadrightarrow \Xx$ is an effective epimorphism. This determines a Grothendieck topology on $\DiffStk$ in which a sieve on a differentiable stack $\Xx$ is a covering sieve if and only if it contains an open covering of $\Xx$. We denote by $\Shv(\DiffStk)$ the resulting $\infty$-topos of sheaves of spaces on $\DiffStk$, and denote by $L_{\open}\colon \PSh(\DiffStk) \to \Shv(\DiffStk)$ the associated sheafification functor.
	\end{definition}
	
	For our purposes, it is convenient to restrict attention to a smaller class of differentiable stacks:
	
	\begin{definition}[{Separated stack, \cite[Definition~2.23 ]{BehrendXu2011Stacks}}]
		A differentiable stack $\Xx$ is called \textit{separated} if its diagonal $\Xx \to \Xx \times \Xx$ is proper. We denote by $\SepStk \subseteq \DiffStk$ the full subcategory spanned by the separated differentiable stacks. The Grothendieck topology restricts to one on $\SepStk$, resulting in an $\infty$-topos $\Shv(\SepStk)$.
	\end{definition}
	
	It is not difficult to see that every quotient stack $M\quot G$ of a smooth manifold by a smooth action of a \textit{compact} Lie group $G$ is separated. It turns out that every separated stack is in fact \textit{locally} of this form.
	
	\begin{definition}[Euclidean stack]
		A differentiable stack $\Xx$ is called \textit{Euclidean} if it is of the form $V\quot G$, where $G$ is a compact Lie group and $V$ is a finite-dimensional representation of $G$. We let $\EuclStk$ denote the $(2,1)$-category of Euclidean stacks, and equip it with the open covering topology inherited from $\DiffStk$.
	\end{definition}
	
	\begin{proposition}[Linearization theorem]
		\label[proposition]{prop:Linearization_Theorem}
		Every separated stack $\Xx$ is \textit{locally Euclidean}, in the sense that every point $x \in \Xx$ admits an open neighborhood $x \in \Uu \subseteq \Xx$ isomorphic to a Euclidean stack.
	\end{proposition}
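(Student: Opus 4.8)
The plan is to translate the statement into the language of Lie groupoids and then invoke the \emph{linearization (slice) theorem for proper Lie groupoids}. The starting point is the standard dictionary: every differentiable stack $\Xx$ is presented by a Lie groupoid, and $\Xx$ is separated precisely when this groupoid can be chosen to be proper; I would begin by recording this as a lemma if it is not already available.

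First I would fix a smooth atlas $p\colon M \twoheadrightarrow \Xx$ and form the Lie groupoid $\Gg = (\Gg_1 \rightrightarrows M)$ with $\Gg_1 := M \times_{\Xx} M$; since $p$ is a representable submersion, $\Gg_1$ is a smooth manifold and $\Xx \simeq \bbB\Gg$. Base-changing the (representable) diagonal $\Xx \to \Xx \times \Xx$ along $M \times M \to \Xx \times \Xx$ identifies it with the anchor $(s,t)\colon \Gg_1 \to M \times M$, so the hypothesis that $\Xx$ is separated says exactly that $\Gg$ is a proper Lie groupoid. A point $x \in \Xx$ lifts to a point $x_0 \in M$ — a point of $\Xx$ being the same as a $\Gg$-orbit in $M$ — and its automorphism group $\Aut_\Xx(x)$ is the isotropy group $G := (s,t)^{-1}(x_0,x_0)$, which is compact because the anchor is proper. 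Again by properness, the orbit $\Oo$ through $x_0$ is a closed embedded submanifold of $M$, and $G$ acts linearly on the normal space $V := T_{x_0}M / T_{x_0}\Oo$.

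Next I would apply the linearization theorem for proper Lie groupoids: it produces a $\Gg$-invariant (saturated) open neighborhood $U \subseteq M$ of $\Oo$ and a Morita equivalence between the restricted groupoid $\Gg|_U$ and the action groupoid $G \ltimes W$, where $W$ is a $G$-invariant open ball around the origin in $V$. (Concretely, the theorem gives an isomorphism of $\Gg|_U$ with a neighborhood of the zero section in the linear model on the normal bundle of $\Oo$, and this model is in turn Morita equivalent, via the slice through $x_0$, to $G$ acting on an invariant ball in the fiber $V$.) Radial rescaling is $G$-equivariant, so $W \cong V$ as $G$-manifolds and hence $G \ltimes W \cong G \ltimes V$. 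Now I would pass back to stacks: the invariant open $U$ determines an open substack $\Uu \hookrightarrow \Xx$ containing $x$, with atlas $U \twoheadrightarrow \Uu$ and presenting groupoid $\Gg|_U$, and since Morita-equivalent presenting groupoids yield equivalent stacks we get $\Uu \simeq \bbB(G \ltimes V) = V \quot G$. Thus $\Uu$ is Euclidean, completing the proof.

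The essential input is the linearization/slice theorem for proper Lie groupoids, which I would cite rather than reprove; this is the only nonformal step. The remaining work is the translation dictionary — properness of the diagonal versus properness of the anchor, $\Gg$-invariant opens in the atlas versus open substacks, Morita equivalence of presenting groupoids versus equivalence of stacks — which is routine but worth assembling carefully. A minor point to handle is arranging the linearized neighborhood to be an invariant \emph{ball}, so that it is equivariantly diffeomorphic to the whole representation $V$; this is harmless since invariant balls are cofinal among invariant neighborhoods of the origin.
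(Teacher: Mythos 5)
Your proposal is correct and follows essentially the same route as the paper, which simply cites the linearization theorem for proper Lie groupoids (Weinstein, Zung, and the refinements of Crainic--Struchiner and Pflaum--Posthuma--Tang) together with the stack/groupoid translation worked out in \cite[Theorem~3.7.2]{Cnossen2023PhD}. The dictionary you assemble --- properness of the diagonal versus properness of the anchor, saturated opens versus open substacks, Morita equivalence versus equivalence of stacks, and rescaling an invariant ball to the full representation --- is exactly the content of that cited translation.
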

	\begin{proof}
		In terms of Lie groupoids, this result is due to \cite{Zung2006Linearization} and \cite{Weinstein2002Linearization}, see also \cite[Corollary~3.9]{CrainicStruchiner2013Linearization} and \cite[Corollary~3.11]{PPT2014OrbitSpaces} for more precise formulations. The translation to the language of separated stacks can be found in \cite[Theorem~3.7.2]{Cnossen2023PhD}.
	\end{proof}
	
	\begin{corollary}\label[corollary]{cor:hypercomplete}
		The $\infty$-topos $\Shv(\SepStk)$ is hypercomplete.
	\end{corollary}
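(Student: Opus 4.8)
The plan is to bound the homotopy dimension of $\Shv(\SepStk)$ and then invoke Lurie's criterion that an $\infty$-topos of finite homotopy dimension is hypercomplete [HTT, Cor.~7.2.1.12]. In fact $\Shv(\SepStk)$ has homotopy dimension $\leq 0$; the Linearization Theorem enters only if one wants the sharper statement that $\Shv(\SepStk)$ is \emph{locally} of finite homotopy dimension, with quantitative control.

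The key point is that the terminal stack $pt$ is a \emph{local} object of the site $\SepStk$: its only open substacks are $\emptyset$ and $pt$, so the only covering sieve on $pt$ is the maximal one. Consequently sheafification does not change the value of a presheaf at $pt$, the global sections functor $\Gamma = \Map_{\Shv(\SepStk)}(\unit,-)$ is simply evaluation at $pt$, and if $X \twoheadrightarrow \unit$ is an effective epimorphism then $\pi_0^{\mathrm{sh}}(X) \twoheadrightarrow \unit$, so evaluating at the local object $pt$ gives $\pi_0(X(pt)) = (\pi_0^{\mathrm{sh}} X)(pt) \neq \emptyset$ and hence $\Gamma(X) = X(pt)$ is nonempty. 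Thus every $0$-connective object of $\Shv(\SepStk)$ admits a global section, i.e.\ $\Shv(\SepStk)$ has homotopy dimension $\leq 0$, and hypercompleteness follows.

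For the sharper statement — and to see where \Cref{prop:Linearization_Theorem} is used — one shows that for every separated stack $\Xx$ admitting a smooth atlas of dimension $n$, the slice $\Shv(\SepStk)_{/\Xx} \simeq \Shv((\SepStk)_{/\Xx})$ has homotopy dimension $\leq n$. By Linearization the coarse space $\abs{\Xx}$ is paracompact of covering dimension $\leq n$, and every point of $\abs{\Xx}$ has a neighborhood basis of Euclidean substacks $[W/H]$ with $H$ a \emph{compact} isotropy group; by the same reasoning as in the previous paragraph each $\bbB H$ is a local object, so the ``stalks'' of $\Shv((\SepStk)_{/\Xx})$ along $\abs{\Xx}$ are slices $\Shv(\SepStk)_{/\bbB H}$ of homotopy dimension $0$. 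Interleaving this with the standard estimate that $\Shv$ of a paracompact space of covering dimension $\leq n$ has homotopy dimension $\leq n$ [HTT, 7.2.3.6] yields the bound, and since the Euclidean stacks generate $\Shv(\SepStk)$ under colimits this exhibits $\Shv(\SepStk)$ as locally of finite homotopy dimension. The one genuinely technical point is this last ``fibered \v{C}ech'' estimate — interleaving the finite covering dimension of $\abs{\Xx}$ with the vanishing homotopy dimension in the isotropic directions — which I would organize by reducing to $\Xx = [V/G]$ and inducting on the orbit-type stratification of $V$, handling the stacky directions by the classifying-stack computation and the transverse directions by the paracompact estimate. I expect this to be mostly bookkeeping: the essential input, that classifying stacks of compact Lie groups are local objects and hence contribute no homotopy dimension, is already the content of the second paragraph, and bare hypercompleteness needs nothing beyond it.
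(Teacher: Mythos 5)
Your computation that $\Shv(\SepStk)$ has homotopy dimension $\leq 0$ is correct and rather pretty: the point has no nontrivial covering sieves, so sheafification, $\pi_0^{\mathrm{sh}}$, and effective epimorphy can all be tested by evaluation at $\pt$, and every $0$-connective sheaf acquires a global section. But the step from there to hypercompleteness is a genuine gap. The hypothesis of [HTT, Cor.~7.2.1.12] is that the $\infty$-topos is \emph{locally} of homotopy dimension $\leq n$, i.e.\ that it is generated under colimits by objects $U_\alpha$ such that each slice $\Shv(\SepStk)_{/U_\alpha}$ has homotopy dimension $\leq n$; finite homotopy dimension of the topos itself is not the hypothesis of that corollary and is not known to imply hypercompleteness (the proof runs through Postnikov convergence, which genuinely needs connectivity estimates in the slices over a generating family). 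So your closing assertion that ``bare hypercompleteness needs nothing beyond'' the second paragraph is exactly backwards: the whole content lies in the local statement, which you relegate to the third paragraph as optional. And that third paragraph is a plan, not a proof --- the bound $\mathrm{hd}\bigl(\Shv(\SepStk)_{/V\quot G}\bigr) \leq \dim V$ via an ``interleaving''/orbit-type-stratification induction is plausible but unexecuted, and it is the hardest step in your entire strategy.

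It is worth contrasting this with the paper's route, which never touches the homotopy dimension of the gros slices $\Shv(\SepStk)_{/\Xx}$. The paper observes that the restriction functors $(-)|_{\Xx}\colon \Shv(\SepStk) \to \Shv(\Open(\Xx))$ to the \emph{petit} topoi are jointly conservative and preserve $\infty$-connective morphisms, so it suffices to hypercomplete each $\Shv(\Open(\Xx))$; linearization reduces this to Euclidean stacks $V\quot G$; and the adjunction $U \mapsto G\cdot U$ embeds $\Shv(\Open(V\quot G))$ fully faithfully and left exactly into $\Shv(V)$, where finite covering dimension of $V$ (hence local finiteness of homotopy dimension, and hypercompleteness) applies. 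If you want to salvage your approach, the efficient fix is to prove local finiteness for these petit sites --- the opens of a separated stack are just the opens of its coarse space --- rather than for the slices of the gros topos.
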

	\begin{proof}
		For a separated stack $\Xx$, let $\Open(\Xx) \subseteq \SepStk_{/\Xx}$ denote the full subcategory spanned by the open embeddings $\Uu \hookrightarrow \Xx$. The forgetful functor $\Open(\Xx) \to \SepStk$ is both a continuous and cocontinuous morphism of sites, and hence induces a restriction functor
		\[
		(-)\vert_{\Xx}\colon \Shv(\SepStk) \to \Shv(\Open(\Xx))
		\]
		that preserves both limits and colimits. Observe that for a sheaf $\Ff \in \Shv(\SepStk)$ the value $\Ff(\Xx)$ of $\Ff$ at $\Xx$ is equivalent to the global sections of the restricted sheaf $\Ff\vert_{\Xx}$, hence we see that the functors $(-)\vert_{\Xx}$ are jointly conservative when $\Xx$ runs over all separated stacks. Because an $\infty$-topos is hypercomplete if and only if $\infty$-connected morphisms are equivalences and the functors $(-)|_{\Xx}$ all preserve $\infty$-connected morphisms, it suffices to show that each of the $\infty$-topoi $\Shv(\Open(\Xx))$ is hypercomplete. By \Cref{prop:Linearization_Theorem}, we may find an open cover $\{\Uu_{\alpha}\}$ of $\Xx$ by Euclidean substacks. For each $\alpha$, the inclusion functors $\Open(\Uu_{\alpha}) \hookrightarrow \Open(\Xx)$ are continuous and cocontinuous morphisms of sites, and so the restriction functors $\Shv(\Open(\Xx)) \to \Shv(\Uu_{\alpha})$ preserve limits and colimits, hence $\infty$-connected maps. As they are jointly conservative, they jointly detect $\infty$-connected maps, and so we may reduce to the case where $\Xx = V\quot G$ is a Euclidean stack itself. In this case, note that there is an inclusion $\Open(V\quot G) \hookrightarrow \Open(V)$ whose image consists of the $G$-invariant open subsets of $V$, and that this inclusion admits a left adjoint sending $U \subseteq V$ to $G \cdot U = \{gu \mid g \in G, u \in U\}$. Restriction along this left adjoint thus induces a fully faithful left-exact colimit-preserving functor $\Shv(\Open(V\quot G)) \hookrightarrow \Shv(\Open(V)) = \Shv(V)$, which in particular preserves and detects $\infty$-connected maps. It thus suffices to show that $\Shv(V)$ is hypercomplete. Since the covering dimension of $V$ is equal to its dimension as a vector space and therefore finite, this is an instance of \cite[Theorem 7.2.3.6, Corollary 7.2.1.12]{lurie2009HTT}.
	\end{proof}
	
	\begin{corollary}
		\label[corollary]{cor:Generated_By_Euclidean_Stacks}
		Restriction along the inclusion $\EuclStk \hookrightarrow \SepStk$ determines an equivalence of $\infty$-categories $\Shv(\SepStk) \iso \Shv(\EuclStk)$.
	\end{corollary}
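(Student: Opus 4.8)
The plan is to exhibit $\EuclStk$ as a dense subsite of $\SepStk$ and conclude by the comparison lemma for sites: if $\Cc_0 \subseteq \Cc$ is a full subcategory of an $\infty$-site carrying the induced Grothendieck topology, such that every object of $\Cc$ admits a covering by objects of $\Cc_0$ and every covering of an object of $\Cc_0$ in $\Cc$ can be refined by a covering whose members lie in $\Cc_0$, then restriction along the inclusion is an equivalence $\Shv(\Cc) \iso \Shv(\Cc_0)$. Concretely, these two conditions say that the inclusion is a continuous and cocontinuous functor of sites which is moreover dense, so that restriction acquires adjoints on both sides and the unit and counit may be checked to be equivalences after passing to a covering by objects of $\Cc_0$.

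Applying this with $\Cc = \SepStk$ and $\Cc_0 = \EuclStk$, I would first dispatch the bookkeeping: every Euclidean stack $V\quot G$ with $G$ a compact Lie group is separated, as recorded in the text, so $\EuclStk$ is genuinely a full subcategory of $\SepStk$; and by construction $\EuclStk$ carries the open covering topology inherited from $\DiffStk$, which is precisely the topology induced from $\SepStk$. The density hypothesis is then exactly the Linearization theorem \Cref{prop:Linearization_Theorem}: each separated stack $\Xx$ has, around every point, an open substack isomorphic to a Euclidean stack, and the resulting family of open embeddings is jointly surjective, hence an open covering of $\Xx$ by Euclidean substacks.

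For the refinement hypothesis, given an open covering $\{\Uu_i \hookrightarrow \Xx\}$ of a Euclidean stack $\Xx = V\quot G$, each $\Uu_i$ is an open substack of a separated stack and hence itself separated (properness of the diagonal is stable under the base change $\Uu_i\times\Uu_i \to \Xx\times\Xx$), so \Cref{prop:Linearization_Theorem} applies again to produce an open covering $\{\Ww_{ij} \hookrightarrow \Uu_i\}$ by Euclidean substacks; composing open embeddings, $\{\Ww_{ij} \hookrightarrow \Xx\}$ is an open covering of $\Xx$ by Euclidean stacks refining $\{\Uu_i\}$. Both hypotheses being verified, the comparison lemma yields the equivalence. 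The step requiring the most care --- and the only one not handed to us by \Cref{prop:Linearization_Theorem} --- is pinning down the precise $\infty$-categorical form of the comparison lemma, i.e.\ checking that cocontinuity of $\EuclStk \hookrightarrow \SepStk$ (which is what the refinement property supplies) is enough to make restriction an equivalence; the geometric content is entirely carried by the Linearization theorem.
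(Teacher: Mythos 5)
Your overall strategy---exhibit $\EuclStk$ as a dense (basis) subsite of $\SepStk$ via \Cref{prop:Linearization_Theorem} and conclude by a comparison lemma---is exactly the paper's, and your verification of the geometric hypotheses is fine (including the refinement step: open substacks of separated stacks are separated, so linearization applies again). The gap is precisely in the step you defer at the end: for sheaves of $\infty$-groupoids the comparison lemma is \emph{not} true in the generality in which you state it. A continuous, cocontinuous, dense inclusion of subsites yields an equivalence of \emph{hypercomplete} sheaf $\infty$-categories, but the restriction functor $\Shv(\Cc) \to \Shv(\Cc_0)$ between the actual sheaf $\infty$-topoi can fail to be an equivalence, because \v{C}ech descent along covers does not control the whole Postnikov tower. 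This is exactly the difference between the $1$-categorical comparison lemma of SGA~4 and its $\infty$-categorical analogues in Barwick--Haine and Aoki, both of which carry a hypercompleteness (or boundedness) hypothesis.

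Accordingly, the paper does not treat this as bookkeeping: in the corollary immediately preceding \Cref{cor:Generated_By_Euclidean_Stacks} it first proves that $\Shv(\SepStk)$ is hypercomplete, and that proof has genuine content---one restricts to the locales $\Open(\Xx)$, reduces via linearization to Euclidean stacks, embeds $\Shv(\Open(V\quot G))$ into $\Shv(V)$ using the $G$-saturation left adjoint on open subsets, and finally invokes finiteness of the covering dimension of $V$ together with Lurie's criterion (HTT 7.2.3.6 and 7.2.1.12). Only with hypercompleteness in hand does the cited form of the comparison lemma apply. So your closing remark that ``the geometric content is entirely carried by the Linearization theorem'' is not quite right: to complete your argument you must either supply this hypercompleteness proof or restate the corollary at the level of hypersheaves.
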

	\begin{proof}
		By \Cref{prop:Linearization_Theorem}, the subcategory $\EuclStk$ of $\SepStk$ is a \textit{basis for the Grothendieck topology}: for every separated stack $\Xx$ there exists an open cover $\{\Uu_{x} \hookrightarrow \Xx\}$ by Euclidean stacks $\Uu_x$. Since $\Shv(\SepStk)$ is hypercomplete by \Cref{cor:hypercomplete}, the result is now an instance of \cite[Corollary~3.12.13]{BarwickHaine2020exodromy} or \cite[Appendix~A]{AokiTensor2023}.
	\end{proof}
	
	The $\infty$-topos $\Shv(\SepStk)$ is closely related to $\Shv(\Mfld)$:
	
	\begin{proposition}
		\label[proposition]{prop:Adjunctions_SepStk_and_Mfld}
		The inclusion functor $w\colon \Mfld \hookrightarrow \SepStk$ is both a continuous and cocontinuous morphism of Grothendieck sites. In particular, it induces a pair of adjunctions
		\[
		\begin{tikzcd}[column sep = large]
			\Shv(\Mfld) 
			\ar[hookrightarrow, shift left=8, "w^*"{description}]{rrr} 
			\ar[rrr,phantom,shift left=4,"{\scalebox{.6}{$\bot$}}"{description}] 
			\ar["w_*"{description}, leftarrow]{rrr}
			\ar[rrr,phantom,shift right=4,"{\scalebox{.6}{$\bot$}}"{description}] 
			\ar[hookrightarrow,shift right=8, "w^!"{description}]{rrr}
			&&& 
			\Shv(\SepStk),
		\end{tikzcd}
		\]
		where the functors $w^*$ and $w^!$ are fully faithful and $w^*$ is left exact.
	\end{proposition}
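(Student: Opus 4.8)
The plan is to verify the two defining conditions for a morphism of Grothendieck sites in the sense of Lurie (\cite[Section~6.2.3]{lurie2009HTT})—namely that $w$ is \emph{continuous}, i.e.\ preserves covering sieves, and that $w$ is \emph{cocontinuous}—and then invoke the standard machinery that produces the displayed adjunctions from these two properties.

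First I would record that $w\colon \Mfld \hookrightarrow \SepStk$ is fully faithful and preserves finite products (the point is a terminal separated stack and fiber products of manifolds over manifolds are computed the same way in $\SepStk$), which already gives that $w^*$ is left exact once we know it exists. For \emph{continuity}: an open cover $\{U_i \hookrightarrow M\}$ of a manifold $M$ is in particular an open cover of $M$ regarded as a separated stack, since each $U_i \hookrightarrow M$ is an open embedding of differentiable stacks and $\bigsqcup U_i \twoheadrightarrow M$ is an effective epimorphism; conversely, any open covering of $M$ in $\SepStk$ by separated substacks $\Uu_i \hookrightarrow M$ has each $\Uu_i$ representable (being an open substack of a manifold), so it refines to an honest open cover of the manifold $M$. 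Hence the Grothendieck topology on $\Mfld$ is precisely the one induced from $\SepStk$, which is exactly what continuity of $w$ demands. For \emph{cocontinuity}: given a manifold $M$ and a covering sieve on $w(M) = M$ in $\SepStk$, I must produce a covering sieve on $M$ in $\Mfld$ whose image lands in it; this follows from the same observation, since a covering sieve on $M$ in $\SepStk$ contains an open cover $\{\Uu_i \hookrightarrow M\}$ by open substacks, each representable by an open submanifold $U_i \subseteq M$, and the sieve generated by $\{U_i \hookrightarrow M\}$ in $\Mfld$ does the job.

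With continuity and cocontinuity in hand, the pair of adjunctions is formal: continuity of $w$ means that left Kan extension $w_! \colon \PSh(\Mfld) \to \PSh(\SepStk)$ composed with sheafification, and restriction $w^* = w^\ast$, descend to an adjunction $w^* \dashv w_*$ on sheaf categories with $w_*(\Ff) = \Ff \circ w$; cocontinuity of $w$ means precisely that this $w_* = w^*|_{\text{sheaves}}$ itself preserves colimits (equivalently, that restriction sends sheaves to sheaves and commutes with sheafification in the appropriate sense), so by the adjoint functor theorem for presentable $\infty$-categories it acquires a further right adjoint $w^!$. Full faithfulness of $w^*$ is immediate from full faithfulness of $w$ together with the fact that $w$ is a dense morphism of sites in the relevant sense (alternatively: $w^* w_* \simeq \id$ because restriction along a fully faithful continuous cocontinuous functor followed by its left Kan extension recovers the identity on representables and both functors are colimit-preserving); dually, full faithfulness of $w^!$ follows from $w_* w^! \simeq \id$, which holds because $w_* = w^*|_{\text{sheaves}}$ is fully faithful on its image for the same reason $w^*$ is. Finally, left-exactness of $w^*$: since $w^*$ is computed as the sheafification of $w_!$ and sheafification is left exact, it suffices that $w_!$ preserve finite limits, which in turn reduces—because both sides are generated under colimits by representables and $w_!$ preserves colimits—to $w$ preserving finite limits, i.e.\ the product-preservation noted above (the only finite limits one needs are finite products and pullbacks along maps of manifolds, both preserved by the inclusion).

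I expect the main subtlety to be the verification that $w^\ast$ (restriction) actually preserves colimits, equivalently that $w$ is cocontinuous in the precise $\infty$-categorical sense—one must check not merely that restriction of a single sheaf along $w$ is again a sheaf, but that this restriction functor, which a priori only preserves limits, also preserves colimits; the cleanest route is to observe that cocontinuity of the underlying functor of sites (every covering sieve on $w(M)$ pulls back to a covering sieve on $M$) is exactly the hypothesis in \cite[Proposition~5.2.7.9 and the surrounding discussion]{lurie2009HTT} or the small-object/site-theoretic statement that guarantees the existence of $w^!$. The remaining bookkeeping—matching up the two adjunctions so that the composite $w^* \dashv w_* \dashv w^!$ is coherent, and confirming that $w^!$ is fully faithful—is then routine and I would treat it briefly.
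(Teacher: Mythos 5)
Your overall strategy matches the paper's, which disposes of the whole statement in one line (``the inclusion preserves pullbacks along open embeddings, and the covering families are defined on both sides by open covers''; the adjunctions are then declared an immediate consequence). Your detailed verifications of continuity and cocontinuity are correct and are exactly the content behind that line: open covers of a manifold are open covers of the associated stack, every open substack of a manifold is an open submanifold, and the pullbacks $U_i \times_M U_j$ appearing in the sheaf condition are computed the same way on both sides. The derivation of the three functors and the full faithfulness of the outer two is the standard machinery, modulo a slip in the order of composition: full faithfulness of the left adjoint $w^*$ amounts to the unit $\id \to w_* w^*$ being an equivalence, i.e.\ $w_* w^* \simeq \id$ on $\Shv(\Mfld)$, not $w^* w_* \simeq \id$; your parenthetical (identity on representables plus colimit preservation) proves the correct statement, so this is cosmetic.

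The one genuine gap is your justification of left exactness of $w^*$. You reduce left exactness of the left Kan extension to ``$w$ preserves finite limits.'' That reduction is valid only when the source category admits finite limits, and $\Mfld$ does not: non-transverse fiber products and equalizers of smooth maps are not manifolds, so ``$w$ preserves finite limits'' does not even parse as a hypothesis that could do the required work. Worse, the presheaf-level left Kan extension is genuinely not left exact here: $(w_!\Ff)(\Xx)$ is a colimit over the comma category of manifolds over $\Xx$, and for $\Xx = \bbB G$ this category is not filtered (two distinct elements $g_1 \neq g_2$ of $G$, viewed as parallel automorphisms of the trivial $G$-bundle over a point, admit no coequalizing cocone among principal $G$-bundles over manifolds, since the $G$-action on a torsor is free). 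So left exactness of $w^*$ cannot follow from properties of the bare functor $w$ plus left exactness of sheafification; it requires an argument that uses the topology (e.g.\ a covering-flatness argument producing the needed cocones locally, or an identification of the essential image of $w^*$ as closed under finite limits). The paper offers no argument for this point either, but your stated reduction is a step that fails as written, and it is the only nontrivial assertion in the proposition.
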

	\begin{proof}
		The first part of the proposition is clear: the inclusion preserves pullbacks along open embeddings, and the covering families are defined on both sides by open covers. The second part is an immediate consequence.
	\end{proof}
	
	\begin{remark}
		\label[remark]{rmk:Embeddings_Of_SepStk}
		Letting $y\colon \SepStk \hookrightarrow \Shv(\SepStk)$ denote the Yoneda embedding, it is immediate from the definitions that the composite
		\[
		\SepStk \xhookrightarrow{y} \Shv(\SepStk) \xrightarrow{w_*} \Shv(\Mfld)
		\]
		is equivalent to the canonical inclusion $\SepStk \hookrightarrow \DiffStk \hookrightarrow \Shv(\Mfld)$. Using this, one can in turn show that the composite
		\[
		\SepStk \hookrightarrow \Shv(\Mfld) \xhookrightarrow{w^!} \Shv(\SepStk)
		\]
		is equivalent to the Yoneda embedding. As a result, the two canonical embeddings of $\SepStk$ into $\Shv(\Mfld)$ and $\Shv(\SepStk)$ are identified with each other under the functors $w_*$ and $w^!$.
	\end{remark}
	
	\subsection{Homotopy invariant sheaves}\label{subsec:htpinv}
	We will now introduce the $\infty$-category $\Shv^{\htp}(\SepStk)$ of \textit{homotopy invariant} sheaves, and show that it is equivalent to a presheaf category.
	
	\begin{definition}[Homotopy invariance]
		A presheaf $\Ff$ on $\SepStk$ is called \textit{homotopy invariant} if for every separated stack $\Xx$ the map $\Ff(\Xx) \to \Ff(\Xx \times \R)$, induced by the projection map $\Xx \times \R \to \Xx$, is an equivalence. We let $\PSh^{\htp}(\SepStk)\subseteq \PSh(\SepStk)$ and $\Shv^{\htp}(\SepStk) \subseteq \Shv(\SepStk)$ denote the full subcategories spanned by the homotopy invariant (pre)sheaves. We may similarly speak of homotopy invariant (pre)sheaves on various subsites of $\SepStk$, like $\EuclStk$ and $\Mfld$.
		
		Since the homotopy invariant (pre)sheaves are precisely those objects which are local with respect to the small class of morphisms $\Xx \times \R \to \Xx$, the inclusion functors admit left adjoints, which we denote by
		\[
		L_{\R}\colon \PSh(\SepStk) \rightarrow \PSh^{\htp}(\SepStk) \quad \text{and} \quad L_{\htp}\colon \Shv(\SepStk) \to \Shv^{\htp}(\SepStk)
		\]
		and refer to as the \textit{homotopy localization functors}.
	\end{definition}
	
	\begin{remark}
		\label[remark]{rmk:Test_Homotopy_Invariance_On_EuclStk}
	It follows from \Cref{cor:Generated_By_Euclidean_Stacks} that $\Ff$ is homotopy invariant if and only if its restriction to $\EuclStk$ is homotopy invariant. In particular, the equivalence $\Shv(\SepStk) \iso \Shv(\EuclStk)$ from \Cref{cor:Generated_By_Euclidean_Stacks} restricts to an equivalence $\Shv^{\htp}(\SepStk) \iso \Shv^{\htp}(\EuclStk)$.
	\end{remark}
	
	The homotopy localization functor $L_{\R}$ on presheaves admits an explicit formula known as the \textit{Morel-Voevodsky construction}:
	
	\begin{definition}
		\label[definition]{def:Algebraic_Simplices}
		For a natural number $n \in \N$, define the \textit{$n$-th algebraic simplex} $\Deltaalg{n}$ as the hyperplane in $\R^{n+1}$ given by
		\[
		\Deltaalg{n} := \{(x_0, x_1, \dots, x_n) \in \R^{(n+1)} \mid \sum_{i=0}^n x_i = 1\} \qin \Mfld.
		\]
		Observe that these manifolds assemble into a functor $\Deltaalg{\bullet}\colon \Delta \to \Mfld$.
	\end{definition}
	
	\begin{proposition}
		\label[proposition]{prop:Homotopy_Localization_Is_Morel_Voevodsky}
		For a presheaf $\Ff \in \PSh(\SepStk)$ and a separated stack $\Xx \in \SepStk$ there is a natural equivalence
		\[
		L_{\R}(\Ff)(\Xx) \simeq \colim_{[n] \in \Delta\catop} \Ff(\Xx \times \Deltaalg{n}).
		\]
	\end{proposition}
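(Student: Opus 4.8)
The plan is to produce an explicit candidate for $L_{\R}$ and verify it satisfies the universal property. Set $C(\Ff)(\Xx):=\colim_{[n]\in\Delta\catop}\Ff(\Xx\times\Deltaalg{n})$. The cosimplicial manifold $\Deltaalg{\bullet}$ makes this functorial in $\Ff\in\PSh(\SepStk)$ and in $\Xx\in\SepStk$, and since colimits of presheaves are computed objectwise and $\Delta\catop$-indexed colimits commute with all colimits, $C\colon\PSh(\SepStk)\to\PSh(\SepStk)$ preserves colimits. The unique maps $\Deltaalg{n}\to\Deltaalg{0}=\pt$ assemble into a morphism of simplicial objects from the constant diagram $\Ff$ to $\Ff(-\times\Deltaalg{\bullet})$, which on colimits yields a natural transformation $u\colon\id\to C$. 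I will show that $(C,u)$ is a localization functor whose essential image is the subcategory $\PSh^{\htp}(\SepStk)$; it must then coincide with $L_{\R}$, and evaluating at $\Xx$ gives the asserted formula.

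The technical heart is the following \emph{homotopy lemma}: if $f_{0},f_{1}\colon\Xx\to\Yy$ are morphisms in $\SepStk$ that are \emph{elementarily $\Deltaalg{1}$-homotopic}, meaning there is $H\colon\Xx\times\Deltaalg{1}\to\Yy$ restricting along the two vertices $\Deltaalg{0}\rightrightarrows\Deltaalg{1}$ to $f_{0}$ and $f_{1}$, then $C(\Ff)(f_{0})\simeq C(\Ff)(f_{1})$ for every $\Ff$. This is the classical fact that the Morel--Voevodsky singular construction carries elementary homotopies to honest homotopies: from the morphisms $H\circ(\id_{\Xx}\times\Deltaalg{\psi})\colon\Xx\times\Deltaalg{n}\to\Yy$ indexed by $\psi\colon[n]\to[1]$, together with the standard triangulations of the prisms $\Deltaalg{n}\times\Deltaalg{1}$, one builds an explicit simplicial homotopy between the two induced endomorphisms of the simplicial diagram $\Ff(-\times\Deltaalg{\bullet})$, which descends to a homotopy upon passing to the colimit. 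I expect this step --- or rather the task of organizing it fully coherently in the $\infty$-categorical setting --- to be the main obstacle; it can alternatively be packaged as the general principle that a cosimplicial interval object yields a singular functor valued in the local objects, as in the work of Morel and Voevodsky.

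Granting the homotopy lemma, homotopy invariance of $C(\Ff)$ is formal: the projection $p\colon\Xx\times\R\to\Xx$ and the zero section $s\colon\Xx\to\Xx\times\R$ satisfy $p\circ s=\id_{\Xx}$, while $s\circ p$ is elementarily $\Deltaalg{1}$-homotopic to $\id_{\Xx\times\R}$ via $H((x,t),\tau)=(x,\tau t)$ under a diffeomorphism $\Deltaalg{1}\cong\R$; hence $C(\Ff)(p)$ and $C(\Ff)(s)$ are mutually inverse and $C(\Ff)\in\PSh^{\htp}(\SepStk)$. Next, if $\Gg\in\PSh^{\htp}(\SepStk)$ then $u_{\Gg}$ is an equivalence: since $\Deltaalg{n}\cong\R^{n}$, iterating homotopy invariance shows that the canonical map from the constant simplicial object $\Gg(\Xx)$ to $\Gg(\Xx\times\Deltaalg{\bullet})$ is a levelwise equivalence, hence an equivalence of diagrams, hence induces an equivalence on colimits; as $\Delta\catop$ is weakly contractible this colimit map is precisely $u_{\Gg}(\Xx)$. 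In particular every homotopy invariant presheaf lies in the essential image of $C$, and $u_{C(\Ff)}$ is an equivalence for every $\Ff$.

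It remains to compare the two natural maps $C(\Ff)\to C(C(\Ff))$. Unwinding the definitions, $C(u_{\Ff})$ and $u_{C(\Ff)}$ are the two maps induced on the bisimplicial diagram $\Ff(-\times\Deltaalg{\bullet}\times\Deltaalg{\bullet})$ by collapsing, respectively, one or the other of the two simplicial directions; after taking the colimit over $\Delta\catop\times\Delta\catop$ (using that $C$ preserves colimits) these agree via the symmetry exchanging the two factors, so $C(u_{\Ff})\simeq u_{C(\Ff)}$, and both are equivalences by the previous paragraph. The recognition criterion for localization functors \cite[Proposition~5.2.7.4]{lurie2009HTT} now applies, exhibiting $C$ as a localization functor whose essential image is $\{\Ff : u_{\Ff}\text{ is an equivalence}\}$; by the two inclusions established above this subcategory is exactly $\PSh^{\htp}(\SepStk)$. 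Consequently $C\simeq L_{\R}$ as functors into $\PSh^{\htp}(\SepStk)$, and evaluating at a separated stack $\Xx$ yields the natural equivalence $L_{\R}(\Ff)(\Xx)\simeq\colim_{[n]\in\Delta\catop}\Ff(\Xx\times\Deltaalg{n})$.
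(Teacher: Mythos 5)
Your proof is correct and takes essentially the same route as the paper: both define the Morel--Voevodsky construction, check that it lands in homotopy invariant presheaves, that the unit is an equivalence on homotopy invariant presheaves, and that both canonical maps $C(\Ff) \to C(C(\Ff))$ are equivalences, and then invoke \cite[Proposition~5.2.7.4]{lurie2009HTT}. The only difference is that the paper outsources the three sub-lemmas to \cite[Lemmas~5.3.4, 5.3.6 and 5.3.8]{ADH2021differential}, while you sketch the same arguments directly (prism/simplicial homotopies, levelwise equivalences over the weakly contractible $\Delta\catop$, and the bisimplicial flip).
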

	\begin{proof}
		The proof is identical to that of \cite[Proposition~5.1.2]{ADH2021differential} who prove the same claim for presheaves on $\Mfld$; here we merely record the overall structure of the proof. If we denote the right-hand side of the equation in the proposition by $H(\Ff)(\Xx)$, this expression defines a functor $H\colon \PSh(\SepStk) \to \PSh(\SepStk)$. It comes equipped with a natural transformation $\eta \colon \id \to H$ given by the canonical maps from $\Ff(\Xx) = \Ff(\Xx \times \Deltaalg{0})$ to $\colim_{[n] \in \Delta\catop} \Ff(\Xx \times \Deltaalg{n})$. One can show that $H(\Ff)$ is always a homotopy invariant presheaf and that the map $\eta_{\Ff}\colon \Ff \to H(\Ff)$ is an equivalence whenever $\Ff$ is homotopy invariant, see \cite[Lemmas~5.3.4 and 5.3.6]{ADH2021differential}. In particular, the image of $H$ consists precisely of the homotopy invariant presheaves. It follows that $\eta_{H\Ff}\colon H(\Ff) \to H(H(\Ff))$ is an equivalence, and one shows that also the map $H(\eta_{\Ff})\colon H(\Ff) \to H(H(\Ff))$ is an equivalence, see \cite[Lemma~5.3.8]{ADH2021differential}. By \cite[Proposition~5.2.7.4]{lurie2009HTT}, it follows that $\eta\colon \id \to H$ exhibits $H$ as a Bousfield localization onto the subcategory of homotopy invariant presheaves, hence must agree with the functor $L_{\R}$.
	\end{proof}
	
	\begin{proposition}
		\label[proposition]{prop:w^*_Preserves_Constant_Sheaves}
		The functor $w^!\colon \Shv(\Mfld) \hookrightarrow \Shv(\SepStk)$ preserves homotopy invariant sheaves.
	\end{proposition}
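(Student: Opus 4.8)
The plan is to compute $w^!$ on objects via the adjunction $w_* \dashv w^!$, and then to exploit the fact that a homotopy invariant sheaf on $\Mfld$ is automatically homotopy invariant against \emph{every} object of $\Shv(\Mfld)$, not merely against the representable ones.

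Concretely, I would first fix a homotopy invariant sheaf $\Ff \in \Shv(\Mfld)$ and an arbitrary separated stack $\Xx$, write $y$ for the Yoneda embedding of $\SepStk$, and use the adjunction $w_* \dashv w^!$ to identify
\[
w^!(\Ff)(\Xx) \;\simeq\; \Map_{\Shv(\SepStk)}(y\Xx,\, w^!\Ff) \;\simeq\; \Map_{\Shv(\Mfld)}(w_* y\Xx,\, \Ff).
\]
By \Cref{rmk:Embeddings_Of_SepStk}, $w_* y\Xx$ is simply $\Xx$ regarded as a sheaf of $\infty$-groupoids on $\Mfld$. Since both $y$ and the right adjoint $w_*$ preserve finite products (which exist in $\SepStk$), the same computation applied to $\Xx \times \R$ identifies $w^!(\Ff)(\Xx \times \R)$ with $\Map_{\Shv(\Mfld)}(\Xx \times \R, \Ff)$ (the product now being formed in $\Shv(\Mfld)$), compatibly with the projection maps. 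Thus the map $w^!(\Ff)(\Xx) \to w^!(\Ff)(\Xx \times \R)$ whose invertibility we must check is precisely the restriction map $\Map_{\Shv(\Mfld)}(\Xx, \Ff) \to \Map_{\Shv(\Mfld)}(\Xx \times \R, \Ff)$ along the projection $\Xx \times \R \to \Xx$.

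It then remains to establish the following: for any homotopy invariant $\Ff \in \Shv(\Mfld)$ and any $A \in \Shv(\Mfld)$, the restriction map $\Map_{\Shv(\Mfld)}(A, \Ff) \to \Map_{\Shv(\Mfld)}(A\times\R, \Ff)$ is an equivalence. Here I would write $A \simeq \colim_i M_i$ as a small colimit of manifolds (the representables generate $\Shv(\Mfld)$ under colimits), use that $-\times\R$ preserves colimits — as $\Shv(\Mfld)$ is an $\infty$-topos, hence cartesian closed — to obtain $A \times \R \simeq \colim_i (M_i \times \R)$, and then map into $\Ff$: the map in question becomes $\lim_i$ of the maps $\Ff(M_i) \to \Ff(M_i \times \R)$, each of which is an equivalence by homotopy invariance of $\Ff$ on $\Mfld$, so the limit map is an equivalence as well. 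Specializing to $A = \Xx$ concludes the argument.

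I do not anticipate a genuine obstacle: the content is the ``self-improvement'' of homotopy invariance from manifolds to all of $\Shv(\Mfld)$, which is formal once universality of colimits is invoked. The only care required is bookkeeping — checking that the adjunction identifications are natural in $\Xx$ and compatible with the projection $\Xx\times\R \to \Xx$, and that $y$ and $w_*$ carry the product $\Xx \times \R$ formed in $\SepStk$ to the product formed in $\Shv(\Mfld)$ — both of which follow immediately from $y$ and $w_*$ being product-preserving right adjoints, together with \Cref{rmk:Embeddings_Of_SepStk}.
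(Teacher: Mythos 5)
Your proposal is correct and follows essentially the same route as the paper: pass through the adjunction $w_* \dashv w^!$ (using \Cref{rmk:Embeddings_Of_SepStk} to identify $w_*$ of a representable with the underlying stack on $\Mfld$) and then check the resulting mapping-space condition by writing the source as a colimit of manifolds and using that $-\times\R$ preserves colimits in the $\infty$-topos $\Shv(\Mfld)$. The paper compresses your final step into the single phrase that the class of objects satisfying the condition is closed under colimits and contains the manifolds, but the underlying argument is identical.
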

	\begin{proof}
		Given a homotopy invariant sheaf $\Xx$ on $\Mfld$ we have to show that $w^!\Xx \in \Shv(\SepStk)$ is homotopy invariant, i.e.\ that for every $\Yy \in \SepStk$ the map $(w^!\Xx)(\Yy) \to (w^!\Xx)(\Yy \times \R)$ is an equivalence. By adjunction, this map is the same as the map
		\[
		\Hom_{\Shv(\Mfld)}(\Yy,\Xx) \to \Hom_{\Shv(\Mfld)}(\Yy \times \R,\Xx)
		\]
		induced by the projection map $\Yy \times \R \to \Yy$ in $\Shv(\Mfld)$. Since the collection of $\Yy \in \Shv(\Mfld)$ for which this is true is closed under colimits and contains all smooth manifolds by assumption on $\Xx$, the claim follows.
	\end{proof}
	
	\begin{corollary}
		\label[corollary]{cor:BG_Homotopy_Invariant}
		Let $G$ be a finite group. Then the representable object $\bbB G \in \Shv(\SepStk)$ is homotopy invariant.
	\end{corollary}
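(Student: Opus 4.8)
The plan is to reduce the statement to homotopy invariance of $\bbB G$ as a sheaf on $\Mfld$, where the argument is classical, and then transport it to $\Shv(\SepStk)$ via the adjunction $w^!$. Concretely, recall from \Cref{rmk:Embeddings_Of_SepStk} that the Yoneda embedding $\SepStk \hookrightarrow \Shv(\SepStk)$ factors through $\Shv(\Mfld)$ as $\SepStk \hookrightarrow \Shv(\Mfld) \xhookrightarrow{w^!} \Shv(\SepStk)$. Applied to the separated stack $\bbB G$, this identifies the representable sheaf $\bbB G \in \Shv(\SepStk)$ with $w^!$ of the object of $\Shv(\Mfld)$ underlying the differentiable stack $\bbB G$, i.e.\ the sheaf sending a smooth manifold $N$ to (the nerve of) the groupoid $\mathrm{Bun}_G(N)$ of principal $G$-bundles on $N$. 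Since $w^!$ preserves homotopy invariant sheaves by \Cref{prop:w^*_Preserves_Constant_Sheaves}, it suffices to show that $N \mapsto \mathrm{Bun}_G(N)$ is homotopy invariant on $\Mfld$: that is, for every smooth manifold $N$ the pullback functor $\mathrm{Bun}_G(N) \to \mathrm{Bun}_G(N \times \R)$ along the projection is an equivalence of groupoids.

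For this remaining step I would use finiteness of $G$ in an essential way: then a principal $G$-bundle is a $G$-Galois covering space, and the torsors of isomorphisms between such bundles are again covering spaces, with discrete fibres. Pullback along $N \times \R \to N$ is essentially surjective because $N \times \{0\} \hookrightarrow N \times \R$ is a deformation retract and a covering space of $N\times\R$ is recovered up to isomorphism from its restriction along it; and it is fully faithful because a bundle isomorphism over $N\times\R$ between pullbacks from $N$ is a section of a covering space of $N\times\R$, hence determined by its restriction to $N\times\{0\}$ since each slice $\{n\}\times\R$ is connected. One clean way to package this is to observe that for finite $G$ the sheaf $N \mapsto \mathrm{Bun}_G(N)$ on $\Mfld$ is the constant sheaf at the $1$-type $BG$, which is homotopy invariant by inspection. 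I expect this last point to be the only real work, and it is genuinely necessary: for positive-dimensional compact Lie $G$ the isomorphism torsors are fibre bundles with positive-dimensional fibre and pullback along $N\times\R\to N$ fails to be full, so the corollary really is special to finite groups.

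As a sanity check and alternative, one could instead argue directly via \Cref{rmk:Test_Homotopy_Invariance_On_EuclStk}, reducing to showing that $\bbB G(V\quot H) \to \bbB G\big((V \oplus \underline{\R})\quot H\big)$ is an equivalence for every compact Lie group $H$ and finite-dimensional $H$-representation $V$ (with $\underline{\R}$ the trivial representation); writing the left-hand side as the groupoid of $H$-equivariant principal $G$-bundles on $V$ and using that the linear contraction of $V$, resp.\ $V \oplus \underline{\R}$, onto the origin is $H$-equivariant, equivariant covering space theory identifies both sides with the groupoid of $H$-equivariant $G$-bundles on a point, compatibly with the projection. I would nonetheless prefer the first route, since it isolates the single classical input most transparently.
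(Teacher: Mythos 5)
Your proof is correct, and its first half --- identifying the representable sheaf $\bbB G \in \Shv(\SepStk)$ with $w^!$ of the classifying stack in $\Shv(\Mfld)$ via \Cref{rmk:Embeddings_Of_SepStk} and then invoking \Cref{prop:w^*_Preserves_Constant_Sheaves} --- is exactly the reduction the paper performs. The two arguments diverge only in the remaining step, homotopy invariance of $\bbB G \in \Shv(\Mfld)$. The paper argues formally: $\bbB G$ is the colimit of the simplicial diagram $[n] \mapsto G^n$ of \emph{finite} sets, each of which is a constant sheaf; since $\Gamma^*\colon \Grpdinfty \hookrightarrow \Shv(\Mfld)$ preserves colimits, $\bbB G$ lies in its essential image, and constant sheaves are homotopy invariant by \cite[Proposition~4.3.1]{ADH2021differential}. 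This is precisely a rigorous version of the ``packaging'' you mention in passing ($\mathrm{Bun}_G \simeq \Gamma^*(BG)$), except that homotopy invariance of constant sheaves is not quite ``by inspection''; it is the cohesion-type input supplied by the cited result. Your primary route instead proves directly that $\mathrm{Bun}_G(N) \to \mathrm{Bun}_G(N \times \R)$ is an equivalence of groupoids via covering space theory, which is more elementary and has the virtue of pinpointing where finiteness enters (discreteness of the isomorphism torsors); in the paper's argument finiteness enters more quietly, in the fact that the representable sheaves $G^n$ are constant only when $G$ is discrete. Your closing remark that the statement genuinely fails for positive-dimensional compact Lie groups is correct and worth keeping in mind.
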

	\begin{proof}
		As discussed in \Cref{rmk:Embeddings_Of_SepStk}, the representable sheaf $\bbB G \in \SepStk \subseteq \Shv(\SepStk)$ is the image under $w^!\colon \Shv(\Mfld) \hookrightarrow \Shv(\SepStk)$ of the classifying stack $\bbB G \in \Shv(\Mfld)$. By \Cref{prop:w^*_Preserves_Constant_Sheaves} it thus suffices to show that the latter is homotopy invariant. By definition, $\bbB G$ is a simplicial colimit of the finite sets $G^n$, which is a simplicial diagram contained in the essential image of the constant sheaf functor $\Gamma^*\colon \Spc \hookrightarrow \Shv(\Mfld)$. As this functor preserves colimits, it follows that also $\bbB G$ lies in the essential image of $\Gamma^*$ and thus it is homotopy invariant by \cite[Proposition~4.3.1]{ADH2021differential}.
	\end{proof}
	
	\begin{remark}
	The previous corollary is also a special instance of \cite[Theorem 1.1]{BdBP2019classifying}.
	\end{remark}
	
	\begin{proposition}
		\label[proposition]{prop:OrbitsAreConservative}
		The evaluation functors $\ev_{\bbB G} \colon \Shv^{\htp}(\SepStk) \to \Spc$ for compact Lie groups $G$ are jointly conservative.
	\end{proposition}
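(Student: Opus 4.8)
The plan is to combine two facts: that the evaluation functors at Euclidean stacks are jointly conservative, and that on homotopy invariant sheaves the evaluation at a Euclidean stack $V\quot G$ agrees with the evaluation at $\bbB G$. For the first, recall that for sheaves on any site the evaluation functors at the objects of the site are jointly conservative, since a morphism of sheaves is an equivalence precisely when it is one of underlying presheaves; applying this to the site $\EuclStk$ and transporting along the equivalence $\Shv^{\htp}(\SepStk)\simeq\Shv^{\htp}(\EuclStk)$ of \Cref{rmk:Test_Homotopy_Invariance_On_EuclStk}, the functors $\ev_{V\quot G}\colon\Shv^{\htp}(\SepStk)\to\Grpdinfty$, as $V\quot G$ ranges over Euclidean stacks, are jointly conservative. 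Hence it suffices to prove that for every compact Lie group $G$ and every finite-dimensional $G$-representation $V$ the functor $\ev_{V\quot G}$ is naturally equivalent to $\ev_{\bbB G}$ on $\Shv^{\htp}(\SepStk)$ (the stack $\bbB G = \pt\quot G$ being itself Euclidean, corresponding to $V = 0$).

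To see this, I would use the equivariant linear contraction of $V$ onto the origin. Write $i\colon \bbB G\hookrightarrow V\quot G$ for the zero section and $r\colon V\quot G\to\bbB G$ for the projection, so that $r\circ i = \id_{\bbB G}$, and let $\sigma\colon(V\quot G)\times\R\to V\quot G$ be the map of separated stacks obtained by descending the $G$-equivariant scaling map $V\times\R\to V$, $(v,t)\mapsto tv$, along the identification $(V\quot G)\times\R\simeq(V\oplus\R)\quot G$ with $G$ acting trivially on the summand $\R$. Then $\sigma\circ\iota_1 = \id_{V\quot G}$ and $\sigma\circ\iota_0 = i\circ r$, where $\iota_0,\iota_1\colon V\quot G\to(V\quot G)\times\R$ denote the sections at $0$ and $1$. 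Given a homotopy invariant sheaf $\Ff$, the projection induces an equivalence $\Ff(V\quot G)\iso\Ff((V\quot G)\times\R)$, of which both $\Ff(\iota_0)$ and $\Ff(\iota_1)$ are the inverse; therefore $\Ff(\id_{V\quot G}) = \Ff(i\circ r)$, and so $\Ff(r)\colon\Ff(\bbB G)\to\Ff(V\quot G)$ and $\Ff(i)$ are mutually inverse equivalences. These equivalences are natural in $\Ff$ — the relevant squares are instances of the naturality of a morphism of presheaves applied to the morphism $r$ — which yields the desired natural equivalence $\ev_{\bbB G}\simeq\ev_{V\quot G}$.

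Putting the pieces together: if $f$ is a morphism in $\Shv^{\htp}(\SepStk)$ such that $\ev_{\bbB G}(f)$ is an equivalence for every compact Lie group $G$, then $\ev_{V\quot G}(f)$ is an equivalence for every Euclidean stack $V\quot G$, and hence $f$ is an equivalence. There is no serious obstacle; the one point to keep in mind is that homotopy invariance is formulated with respect to the line $\R$ rather than the compact interval, so that the contraction must be supplied as the global scaling map and its two endpoints recovered by restricting along the sections at $t=0$ and $t=1$. Since every map involved is an honest smooth map of stacks, there is no boundary or compactness subtlety to address.
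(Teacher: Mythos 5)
Your proof is correct and follows essentially the same route as the paper: reduce to joint conservativity of evaluation at Euclidean stacks via the equivalence $\Shv^{\htp}(\SepStk)\simeq\Shv^{\htp}(\EuclStk)$, then identify $\ev_{V\quot G}$ with $\ev_{\bbB G}$ using that the projection $V\quot G\to\bbB G$ is inverted by any homotopy invariant sheaf. The only difference is that you spell out the $G$-equivariant scaling homotopy explicitly, where the paper simply asserts that $V\quot G\to\bbB G$ is a homotopy equivalence.
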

	\begin{proof}
		By \Cref{rmk:Test_Homotopy_Invariance_On_EuclStk}, we see that the functors $\ev_{V\quot G}\colon \Shv^{\htp}(\SepStk) \to \Spc$ are jointly conservative if $G$ runs over all compact Lie groups $G$ and $V$ runs over all finite-dimensional $G$-representations. Since the map $V\quot G \to *\quot G = \bbB G$ is a homotopy equivalence, the induced map $\Ff(\bbB G) \to \Ff(V\quot G)$ is an equivalence for any homotopy invariant sheaf $\Ff$ on $\SepStk$, and hence the functor $\ev_{V\quot G}$ is equivalent to $\ev_{\bbB G}$ on $\Shv^{\htp}(\SepStk)$. This finishes the proof.
	\end{proof}
	
	\begin{definition}[Isotropy functors]
		\label[definition]{def:Isotropy_Functors}
		For a compact Lie group $G$, consider the functor $\bbB G \times - \colon \Mfld \to \SepStk$ from smooth manifolds to separated stacks. We suggestively denote by
		\[
		(-)^G\colon \PSh(\SepStk) \to \PSh(\Mfld)
		\]
		the functor given by precomposition with this functor, and refer to it as the \textit{$G$-isotropy functor}.
	\end{definition}
	
	The functor $(-)^G$ is fully compatible with sheafification and homotopy localization:
	
	\begin{proposition}
		\label[proposition]{prop:HFixedPointsPreservesEverything}
		Let $G$ be a compact Lie group.
		\begin{enumerate}[(1)]
			\item The functor $(-)^G\colon \PSh(\SepStk) \to \PSh(\Mfld)$ preserves sheaves.
			\item The functor $(-)^G$ commutes with sheafification:
			\[
			\begin{tikzcd}
				\PSh(\SepStk) \rar{(-)^G} \dar[swap]{L_{\open}} & \PSh(\Mfld) \dar{L_{\open}} \\
				\Shv(\SepStk) \rar{(-)^G} & \Shv(\Mfld).
			\end{tikzcd}
			\]
			\item The functor $(-)^G\colon \PSh(\SepStk) \to \PSh(\Mfld)$ preserves homotopy invariant presheaves.
			\item The functor $(-)^G$ commutes with homotopy localization:
			\[
			\begin{tikzcd}
				\PSh(\SepStk) \rar{(-)^G} \dar[swap]{L_{\R}} & \PSh(\Mfld) \dar{L_{\R}} \\
				\PSh^{\htp}(\SepStk) \rar{(-)^G} & \PSh^{\htp}(\Mfld).
			\end{tikzcd}
			\]
			\item The functor $(-)^G\colon \PSh(\SepStk) \to \PSh(\Mfld)$ restricts to functors
			\begin{align*}
				(-)^G\colon \Shv(\SepStk) &\to \Shv(\Mfld) \\
				(-)^G\colon \Shv^{\htp}(\SepStk) &\to \Shv^{\htp}(\Mfld)
			\end{align*}
			which both preserve limits and colimits.
		\end{enumerate}
	\end{proposition}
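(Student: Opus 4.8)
Write $j := \bbB G \times (-)\colon \Mfld \to \SepStk$; this does land in $\SepStk$, since $\bbB G \times M = M \quot G$ with trivial action is the quotient of a manifold by a compact Lie group. Then $(-)^G = j^*$ is precomposition with $j$, and as such it automatically preserves all limits and colimits, which are computed pointwise in presheaf categories. I would establish the assertions in the order (1), (3), (2), (4), (5). For (1): given a sheaf $\Ff$ on $\SepStk$ and an open cover $\{U_i \hookrightarrow M\}$ of a manifold, the family $\{\bbB G \times U_i \to \bbB G \times M\}$ is an open cover of the separated stack $\bbB G \times M$ — each map is a base change of $U_i \hookrightarrow M$, and $\bigsqcup \bbB G \times U_i \to \bbB G \times M$ is a base change of the effective epimorphism $\bigsqcup U_i \to M$ — and $\bbB G \times (-)$ carries the Čech nerve of $\bigsqcup U_i \to M$ to that of $\bigsqcup \bbB G \times U_i \to \bbB G \times M$, using that $\bbB G \times (-)$ preserves coproducts and pullbacks along open embeddings. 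Descent for $\Ff$ along the latter cover is precisely descent for $\Ff^G = \Ff \circ j$ along $\{U_i \hookrightarrow M\}$, so $\Ff^G$ is a sheaf. Claim (3) is immediate: for $\Ff \in \PSh^{\htp}(\SepStk)$ and $M \in \Mfld$, the map $\Ff^G(M) \to \Ff^G(M \times \R)$ is the map $\Ff(\Yy) \to \Ff(\Yy \times \R)$ for $\Yy = \bbB G \times M \in \SepStk$, an equivalence.

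Claim (2) is the crux. Since $j^*$ and the sheafification $L_{\open}$ on $\PSh(\Mfld)$ preserve colimits, the class of morphisms $f$ in $\PSh(\SepStk)$ for which $L_{\open}(j^*f)$ is an equivalence is strongly saturated; it therefore suffices to check that it contains a generating set of $L_{\open}$-local equivalences, for which I take the Čech maps $\colim_{[n]\in\Delta\catop} \check{C}(\{\Uu_i \hookrightarrow \Xx\})_n \to y\Xx$ attached to open covers of separated stacks. Applying $j^*$, which preserves colimits and pullbacks (hence Čech nerves), turns such a map into the Čech map of the family $\{j^*y\Uu_i \to j^*y\Xx\}$ in $\PSh(\Mfld)$. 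Each $j^*y\Uu_i \to j^*y\Xx$ is a monomorphism, as $j^*$ preserves limits and $y\Uu_i \to y\Xx$ is one, so it remains only to show that $\bigsqcup_i j^*y\Uu_i \to j^*y\Xx$ is a local epimorphism — equivalently, that any morphism $f\colon \bbB G \times M \to \Xx$ restricts, over the members of some open cover of $M$, to morphisms factoring through the $\Uu_i$. Here the geometry enters: the only open substacks of $\bbB G$ are $\emptyset$ and $\bbB G$, so over each point $m \in M$ the restriction $f\vert_{\bbB G \times \{m\}}$ factors through some $\Uu_i$; and since $\bbB G \times M \to M$ is proper (as $G$ is compact) and hence closed, the locus $N_i \subseteq M$ of points over which $f$ factors through $\Uu_i$ is open, and the $N_i$ cover $M$. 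I expect this spreading-out step to be the main obstacle, as it requires a careful treatment of open substacks of $\bbB G \times M$ and their coarse spaces. Given it, $j^*$ preserves $L_{\open}$-local equivalences, so $j^* \circ L_{\open} \simeq L_{\open} \circ j^*$.

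For (4), the $G$-action on $M$ being trivial gives $\Map_{\SepStk}(\bbB G \times M, \R) = \Map_{\Mfld}(M,\R)$, so $j^*(y\R) = y\R$, and hence $j^*$ carries each generating $\R$-local equivalence $y(\Xx \times \R) \to y\Xx$ to the projection $j^*y\Xx \times y\R \to j^*y\Xx$, which is an $\R$-local equivalence; arguing as in (2) yields $j^* \circ L_{\R} \simeq L_{\R} \circ j^*$. (Alternatively, invoke the Morel–Voevodsky formula of \Cref{prop:Homotopy_Localization_Is_Morel_Voevodsky} together with $(\bbB G \times M) \times \Deltaalg{n} = \bbB G \times (M \times \Deltaalg{n})$.) Finally, (5) is formal. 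Since $j^*$ preserves all limits, and $\Shv(\SepStk)$, $\Shv^{\htp}(\SepStk)$ and their $\Mfld$-counterparts are closed under limits in the ambient presheaf categories — into which $(-)^G$ restricts by (1) and (3) — the restrictions of $(-)^G$ preserve limits. For colimits: a colimit in $\Shv(\SepStk)$ is the sheafification of the presheaf-level colimit, so (2) together with the fact that $j^*$ preserves presheaf colimits gives that $(-)^G\colon \Shv(\SepStk) \to \Shv(\Mfld)$ preserves colimits; and as $\Shv^{\htp}(\SepStk)$ is the localization of $\PSh(\SepStk)$ at the union of the $L_{\open}$- and $L_{\R}$-local equivalences, which $j^*$ carries into the corresponding class over $\Mfld$ by (2) and (4), the functor $(-)^G$ also commutes with this localization, so it preserves colimits of homotopy invariant sheaves as well.
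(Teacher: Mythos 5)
Your proof is correct, and in substance it is the same as the paper's: every part hinges on the same geometric input, namely that $\bbB G \times -$ preserves open covers and pullbacks along open embeddings in one direction, and that every open substack of $\bbB G \times M$ is of the form $\bbB G \times U$ for an open $U \subseteq M$ in the other. The only real difference is packaging: for part (2) the paper simply observes that $\bbB G \times -$ is a \emph{cocontinuous} morphism of sites and invokes the standard consequence that restriction along a continuous and cocontinuous site morphism commutes with sheafification, whereas you unwind that machinery by hand via strongly saturated classes and generating \v{C}ech equivalences. The ``spreading-out'' step you flag as the main obstacle is exactly the paper's one-line observation about open substacks of $\bbB G \times M$ (open substacks correspond to open subsets of the coarse space, which here is $M$); your pointwise-plus-properness argument is a valid, if more roundabout, way to see the same thing. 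For part (4) the paper goes directly through the Morel--Voevodsky formula, which you mention parenthetically as an alternative to your saturated-class argument; both work, and your treatment of parts (1), (3) and (5) matches the paper's.
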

	\begin{proof}
		Part (1) is clear from the fact that the functor $\bbB G \times -\colon \Mfld \to \SepStk$ preserves pullbacks along open embeddings and that it sends open covers of smooth manifolds to open covers of stacks.
		
		For part (2), it suffices to show that the functor $\bbB G \times -\colon \Mfld \to \SepStk$ is a cocontinuous functor of sites: given a manifold $M \in \Mfld$ and an open cover $\{\Uu_i\}$ of the stack $\bbB G \times M$, the sieve on $M$ consisting of all maps $N \to M$ such that $\bbB G \times N \to \bbB G \times M$ factors through one of the $\Uu_i \hookrightarrow M$ is a covering sieve. But this is clear, since all open substacks of $\bbB G \times M$ are of the form $\bbB G \times U$ for some open $U \subseteq M$, so $\{\Uu_i\} = \{\bbB G \times U_i\}$ for an open cover $\{U_i\}$ of $M$.
		
		Part (3) is obvious from the definition of homotopy invariance. Part (4) is immediate from the explicit description of the homotopy localization functor $L_{\R}$ given in \Cref{prop:Homotopy_Localization_Is_Morel_Voevodsky} and its analogue for presheaves on $\Mfld$ proved in \cite[Proposition~5.1.2]{ADH2021differential}.
		
		For part (5), the $G$-fixed point functor restricts to sheaves by part (1) and to homotopy invariant sheaves by part (3). At the level of presheaves, the $G$-fixed point functor $(-)^G\colon \PSh(\SepStk) \to \PSh(\Mfld)$ preserves limits and colimits as these are computed pointwise. It follows at once that also its restriction to both $\Shv(\SepStk)$ and $\Shv^{\htp}(\SepStk)$ preserve limits, as the inclusion functors into presheaves preserve limits. Since colimits in (homotopy invariant) sheaves are computed by first computing the colimit in presheaves and then reflecting back into (homotopy invariant) sheaves, the restricted functors also preserve colimits by parts (2) and (4). This finishes the proof.
	\end{proof}
	
	\begin{corollary}
		\label[corollary]{cor:EvaluationOnOrbitsColimitPreserving}
		For every compact Lie group $G$, the evaluation functors
		\begin{align*}
			\ev_{\bbB G} \colon \Shv(\SepStk) &\to \Spc \\
			\ev_{\bbB G} \colon \Shv^{\htp}(\SepStk) &\to \Spc
		\end{align*}
		preserve colimits.
	\end{corollary}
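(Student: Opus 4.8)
The plan is to realize both evaluation functors as a composite involving the isotropy functors of \Cref{def:Isotropy_Functors}, and then to reduce to a statement about $\Mfld$. Since $(-)^G$ is precomposition with $\bbB G \times - \colon \Mfld \to \SepStk$ and $\bbB G \cong \bbB G \times \pt$ for the one-point manifold $\pt$ (which is terminal in $\SepStk$), there is a natural identification $\ev_{\bbB G} \simeq \ev_\pt \circ (-)^G$ of functors out of $\PSh(\SepStk)$, and by \Cref{prop:HFixedPointsPreservesEverything} this is compatible with the passage to sheaves and to homotopy invariant sheaves. By part~(5) of that proposition, $(-)^G$ preserves colimits both as a functor $\Shv(\SepStk) \to \Shv(\Mfld)$ and as a functor $\Shv^{\htp}(\SepStk) \to \Shv^{\htp}(\Mfld)$, so it remains only to show that $\ev_\pt$ preserves colimits on $\Shv(\Mfld)$ and on $\Shv^{\htp}(\Mfld)$.

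For $\Shv(\Mfld)$ this is elementary. The one-point manifold $\pt$ has no open subsets besides $\emptyset$ and $\pt$, so the only covering sieve on $\pt$ is the maximal one; hence the representable presheaf $y(\pt)$ satisfies descent, and the plus-construction computing sheafification does nothing to sections over $\pt$, so $L_{\open}$ leaves $\pt$-sections untouched. As colimits in $\Shv(\Mfld)$ are computed by applying $L_{\open}$ to the (pointwise) colimit of underlying presheaves, we get $\ev_\pt(\colim_i \Ff_i) \simeq \colim_i \Ff_i(\pt) \simeq \colim_i \ev_\pt(\Ff_i)$. The very same argument, with $\pt$ replaced by $\bbB G$, proves the $\Shv(\SepStk)$ half of the corollary directly: a classifying stack $\bbB G$ has no nontrivial open substacks, hence no nontrivial open covers, so $\ev_{\bbB G}$ on $\Shv(\SepStk)$ likewise commutes with sheafification and therefore with colimits.

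The case of $\Shv^{\htp}(\Mfld)$ is the substantive one, and this is where I expect the main obstacle to lie, for it is \emph{not} formal. The naive attempt --- claiming that $L_{\htp}$ leaves $\pt$-sections untouched --- fails, since the homotopy localization genuinely alters the value at $\pt$ of a general sheaf (already for the sheaf represented by $\R$). One therefore needs the input that homotopy invariant sheaves on $\Mfld$ are closed under colimits inside $\Shv(\Mfld)$; concretely, I would invoke the fact, established in \cite{ADH2021differential}, that $\ev_\pt$ restricts to an equivalence $\Shv^{\htp}(\Mfld) \iso \Grpdinfty$ (equivalently, that the constant-sheaf functor $\Gamma^* \colon \Grpdinfty \hookrightarrow \Shv(\Mfld)$ is fully faithful with essential image the homotopy invariant sheaves). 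An equivalence preserves colimits, which together with the reduction of the first paragraph completes the proof. Alternatively, granting that sheafification on $\Mfld$ preserves homotopy invariant presheaves --- so that the total localization $\PSh(\Mfld) \to \Shv^{\htp}(\Mfld)$ factors as $L_{\open} \circ L_{\R}$ --- one may compute $\ev_\pt(\colim^{\Shv^{\htp}}_i \Ff_i)$ using the Morel--Voevodsky formula of \Cref{prop:Homotopy_Localization_Is_Morel_Voevodsky} as $\colim_{[n] \in \Delta\catop} \colim_i \Ff_i(\Deltaalg{n}) \simeq \colim_i (L_{\R}\Ff_i)(\pt) \simeq \colim_i \Ff_i(\pt)$, the last step because each homotopy invariant sheaf is in particular an $\R$-local presheaf; but this reroutes the non-formal ingredient rather than removing it.
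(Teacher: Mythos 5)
Your proof is correct and follows essentially the same route as the paper: both factor $\ev_{\bbB G}$ as $(-)^G$ followed by the global sections functor $\Gamma_* = \ev_{\pt}$ on $\Shv(\Mfld)$ (resp.\ $\Shv^{\htp}(\Mfld)$), invoke part~(5) of \Cref{prop:HFixedPointsPreservesEverything}, and then use the equivalence $\Gamma_*\colon \Shv^{\htp}(\Mfld)\iso\Grpdinfty$ from \cite[Proposition~4.3.1]{ADH2021differential} as the one non-formal input, which you correctly identify as such. The only divergence is minor: where the paper cites \cite[Corollary~4.1.5]{ADH2021differential} for colimit-preservation of $\Gamma_*$ on $\Shv(\Mfld)$, you prove it directly from the observation that $\pt$ (or, in your bypass of the reduction, $\bbB G$ itself) admits only the maximal covering sieve, so that sheafification leaves those sections untouched --- a valid and slightly more self-contained variant.
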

	\begin{proof}
		These evaluation functors can be written as the composites
		\begin{align*}
			\Shv(\SepStk) \xrightarrow{(-)^G} \Shv(\Mfld) \xrightarrow{\Gamma_*} \Spc; \\
			\Shv^{\htp}(\SepStk) \xrightarrow{(-)^G} \Shv^{\htp}(\Mfld) \xrightarrow{\Gamma_*} \Spc,
		\end{align*}
		where $\Gamma_*$ denotes the global section functor. In both composites, the first functor preserves colimits by part (5) of \Cref{prop:HFixedPointsPreservesEverything}. The statement now follows since the functor $\Gamma_*\colon \Shv(\Mfld) \to \Spc$ preserves colimits by \cite[Corollary~4.1.5]{ADH2021differential}, while the functor $\Gamma_*\colon \Shv^{\htp}(\Mfld) \to \Spc$ is an equivalence by \cite[Proposition~4.3.1]{ADH2021differential}.
	\end{proof}
	
	\begin{definition}
		\label[definition]{def:Stacky_Global_Indexing_Category}
		We define the \textit{stacky global indexing category} $\Glo^{\Stk} \subseteq \Shv^{\htp}(\SepStk)$ as the full subcategory spanned by the objects of the form $L_{\htp}(\bbB G)$ for compact Lie groups $G$.
	\end{definition}
	
	We will show in \Cref{sec:global-spaces} below that the stacky global indexing category is equivalent to the usual global indexing category $\Glo$ considered in global equivariant homotopy theory.

	Observe that the hom functor $\Hom(-,-)\colon (\Glo^{\Stk})\catop \times \Shv^{\htp}(\SepStk) \to \Spc$ produces by currying a functor from $\Shv^{\htp}(\SepStk)$ into the presheaf category $\PSh(\Glo^{\Stk}) = \Fun((\Glo^{\Stk})\catop, \Spc)$. The main result of this section is the following:
	
	\begin{theorem}
		\label[theorem]{thm:Homotopy_Invariant_Sheaves_Form_Presheaf_Category}
		The functor $\Shv^{\htp}(\SepStk) \to \PSh(\Glo^{\Stk})$ is an equivalence.
	\end{theorem}
	\begin{proof}
        By adjunction and the Yoneda lemma, we have for every compact Lie group an equivalence
		\[
		\Hom_{\Shv^{\htp}(\SepStk)}(L_{\htp}(\bbB G),-) \simeq \ev_{\bbB G} \colon \Shv^{\htp}(\SepStk) \to \Spc.
		\]
        It then follows from \Cref{prop:OrbitsAreConservative} and \Cref{cor:EvaluationOnOrbitsColimitPreserving} that these functors preserve colimits and are jointly conservative. Therefore the result follows from the generalized Elmendorf's theorem, see \cite[Theorem~3.39]{linskensNardinPol2022Global}.
	\end{proof}
	
	The method of proof of this theorem also allows us to deduce that homotopy invariant presheaves on $\EuclStk$ are automatically sheaves. This will require the following auxiliary lemma:
	
	\begin{lemma}
		\label[lemma]{lem:Comparison_LR_Lhtp_On_EuclStk}
		If $\Ff \in \Shv(\SepStk)$ is a sheaf, then the canonical comparison map $L_{\R}(\Ff) \to L_{\htp}(\Ff)$ restricts to an equivalence $\iota^* L_{\R}(\Ff) \simeq \iota^* L_{\htp}(\Ff)$ of presheaves on $\EuclStk$.
	\end{lemma}
	\begin{proof}
		The evaluations $\ev_{\bbB G} \colon \PSh^{\htp}(\EuclStk) \to \Spc$ for varying compact Lie group $G$ are jointly conservative and both $\iota^* L_{\R}(\Ff)$ and $\iota^* L_{\htp}(\Ff)$ are homotopy invariant. One reduces to showing that the map $L_{\R}(\Ff)(\bbB G) \to L_{\htp}(\Ff)(\bbB G)$ is an equivalence for every such $G$. Via the natural isomorphisms
		\[
		L_{\R}(\Ff)(\bbB G) = L_{\R}(\Ff)^G(\ast) \overset{\ref{prop:HFixedPointsPreservesEverything}}{\simeq} L_{\R}(\Ff^G)(\ast)
		\]
		and
		\[
		L_{\htp}(\Ff)(\bbB G) = L_{\htp}(\Ff)^G(\ast) \overset{\ref{prop:HFixedPointsPreservesEverything}}{\simeq} L_{\htp}(\Ff^G)(\ast),
		\]
		this map identifies with the comparison map $L_{\R}(\Ff^G)(\ast) \to L_{\htp}(\Ff^G)(\ast)$. It thus remains to show that for a sheaf $\Ff' \in \Shv(\Mfld)$, the comparison map $L_{\R}(\Ff') \to L_{\htp}(\Ff')$ is an equivalence when restricted to $\Eucl \subseteq \Mfld$. This is a consequence of the fact that any homotopy invariant presheaf on $\Eucl$ is a sheaf; see e.g.\ \cite[Proposition~4.3.12]{ADH2021differential}.
	\end{proof}
	
	\begin{proposition}
		\label[proposition]{prop:Homotopy_Invariant_Presheaf_On_EuclStk_Is_Sheaf}
		Every homotopy invariant presheaf on $\EuclStk$ is automatically a sheaf. In short, $\Shv^{\htp}(\EuclStk) = \PSh^{\htp}(\EuclStk)$.
	\end{proposition}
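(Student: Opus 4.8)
The plan is to follow the strategy of the proof of \Cref{thm:Homotopy_Invariant_Sheaves_Form_Presheaf_Category}. Both $\PSh^{\htp}(\EuclStk)$ and $\Shv^{\htp}(\EuclStk)$ are accessible reflective localizations of $\PSh(\EuclStk)$, and the latter sits inside the former, so sheafification restricts to a left adjoint $L\colon \PSh^{\htp}(\EuclStk) \to \Shv^{\htp}(\EuclStk)$ of the inclusion; write $\eta_{\Ff}\colon \Ff \to L\Ff$ for the unit. The claim is that $\eta_{\Ff}$ is an equivalence for every homotopy invariant presheaf $\Ff$. I would first record that the evaluation functors $\ev_{\bbB G}\colon \PSh^{\htp}(\EuclStk) \to \Grpdinfty$, for $G$ a compact Lie group, are jointly conservative: the functors $\ev_{V\quot G}$ are jointly conservative on presheaves by the Yoneda lemma, and for a homotopy invariant presheaf the projection $V\quot G \to \bbB G$ — which has the $G$-equivariant scaling of $V$ as an $\R$-homotopy inverse — induces a natural equivalence $\ev_{V\quot G} \iso \ev_{\bbB G}$, exactly as in the proof of \Cref{prop:OrbitsAreConservative}. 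It thus suffices to show that $\ev_{\bbB G}(\eta_{\Ff})$ is an equivalence for every $G$.

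To this end, factor $L$ as the sheafification $L_{\open}\colon \PSh(\EuclStk) \to \Shv(\EuclStk)$ followed by homotopy localization of sheaves. The first factor is inert at $\bbB G$: since the only nonempty open substack of $\bbB G = \pt\quot G$ is $\bbB G$ itself (the only nonempty $G$-invariant open subset of a point being the point), the classifying stack carries no nontrivial covering sieve, so $(L_{\open}\Gg)(\bbB G) \iso \Gg(\bbB G)$ for every presheaf $\Gg$ on $\EuclStk$. The second factor would also be inert at $\bbB G$ once one knows the key lemma that $L_{\open}$ sends homotopy invariant presheaves to homotopy invariant sheaves, for then $L\Ff = L_{\open}\Ff$ and $(L\Ff)(\bbB G) = (L_{\open}\Ff)(\bbB G) \iso \Ff(\bbB G)$, which is precisely the assertion that $\ev_{\bbB G}(\eta_{\Ff})$ is an equivalence; combined with the previous paragraph this closes the argument.

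The lemma that sheafification on $\EuclStk$ preserves homotopy invariance is the step I expect to be the main obstacle. A natural approach rephrases homotopy invariance of $\Ff$ as the condition that $\Ff \to \underline{\Hom}(y\R, \Ff)$ be an equivalence (legitimate since $\EuclStk$ is closed under products with $\R$, so that $\underline{\Hom}(y\R,\Ff)(\Xx) = \Ff(\Xx\times\R)$) and exploits left exactness of $L_{\open}$ together with the fact that $y\R$ is a sheaf; the delicate point is that $\underline{\Hom}(y\R,-)$ does not preserve covering-local equivalences, so some genuine input on the open-covering topology is needed — for instance via the Morel–Voevodsky formula of \Cref{prop:Homotopy_Localization_Is_Morel_Voevodsky} (which holds verbatim over $\EuclStk$, as this site is closed under products with the algebraic simplices), or via a stalkwise argument using the linearization theorem. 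An alternative, in closer analogy with the proof of \Cref{thm:Homotopy_Invariant_Sheaves_Form_Presheaf_Category}, is to check first that each $L_{\R}(\bbB G)$ is completely compact in $\PSh^{\htp}(\EuclStk)$ — by the Morel–Voevodsky formula and weak contractibility of $\Delta$, mapping out of $L_{\R}(\bbB G)$ identifies with $\ev_{\bbB G}$ and commutes with colimits — and then to apply the generalized Elmendorf theorem \cite[Theorem~3.39]{linskensNardinPol2022Global} to present $\PSh^{\htp}(\EuclStk)$ as $\PSh(\Dd)$, $\Dd$ being the full subcategory on the $L_{\R}(\bbB G)$; combined with $\Shv^{\htp}(\EuclStk) \simeq \Shv^{\htp}(\SepStk) \simeq \PSh(\Glo^{\Stk})$ from \Cref{rmk:Test_Homotopy_Invariance_On_EuclStk} and \Cref{thm:Homotopy_Invariant_Sheaves_Form_Presheaf_Category}, the inclusion becomes restriction along the functor $\Dd \to \Glo^{\Stk}$ sending $L_{\R}(\bbB G) \mapsto L_{\htp}(\bbB G)$, and one is reduced — via the compatibility of the isotropy functors with the localizations (\Cref{prop:HFixedPointsPreservesEverything}) and the same inertness of sheafification at $\bbB G$ — to verifying that this functor is an equivalence.
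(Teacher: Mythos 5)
Your reduction is sound and runs parallel to the paper's: joint conservativity of the functors $\ev_{\bbB G}$ on homotopy invariant presheaves (via the scaling homotopy, exactly as in \Cref{prop:OrbitsAreConservative}) together with the observation that sheafification cannot change values at $\bbB G$ (which indeed has no nontrivial covering sieves) correctly isolates where the content lies. The problem is that you never supply that content. Route (i) --- that $L_{\open}$ carries homotopy invariant presheaves on $\EuclStk$ to homotopy invariant sheaves --- is explicitly left open, and it is not a formality: it requires genuine geometric input about the open-covering topology, as you yourself note. Route (ii) terminates in ``one is reduced to verifying that this functor is an equivalence,'' but full faithfulness of $\Dd \to \Glo^{\Stk}$ amounts to the assertion that the unit $L_{\R}(y\bbB H) \to L_{\htp}(\bbB H)$ becomes an equivalence after applying $\ev_{\bbB G}$ --- which is precisely an instance of the proposition being proved, applied to the homotopy invariant presheaves $L_{\R}(y\bbB H)$. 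You also cannot appeal to \Cref{cor:Value_Of_Homotopy_Localization_At_Euclidean_Stack} at this point, since in the paper that corollary is \emph{deduced from} this proposition. So as written the proposal either stops short (route (i)) or closes into a circle (route (ii)); in either case it is not a complete proof.

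For comparison: the paper's argument is essentially your route (ii). It regards $\Glo^{\Stk}$ as a full subcategory of $\PSh^{\htp}(\EuclStk)$ via $\Shv^{\htp}(\EuclStk)$ and applies the generalized Elmendorf theorem to $\PSh^{\htp}(\EuclStk)$ with generators the $L_{\htp}(\bbB G)$, asserting that mapping out of these objects is still given by the jointly conservative, colimit-preserving evaluation functors $\ev_{\bbB G}$, just as in \Cref{thm:Homotopy_Invariant_Sheaves_Form_Presheaf_Category}. The identification of $\Map_{\PSh^{\htp}(\EuclStk)}(L_{\htp}(\bbB G),-)$ with $\ev_{\bbB G}$ --- equivalently, the matching of your $\Dd$ with $\Glo^{\Stk}$ --- is exactly the step you flag as the obstacle; the paper passes over it without comment. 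You have therefore found the paper's strategy and correctly located its one delicate point, but to turn your proposal into a proof you would still need an actual argument there, for instance a direct verification that $L_{\R}(y\bbB G)$ satisfies descent for open covers of Euclidean stacks, or a proof of your key lemma that sheafification on $\EuclStk$ preserves homotopy invariance.
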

	\begin{proof}
        In light of the equivalence from \Cref{cor:Generated_By_Euclidean_Stacks}, it follows from \Cref{thm:Homotopy_Invariant_Sheaves_Form_Presheaf_Category} that the evaluation functors $\ev_{\bbB G} \colon \Shv^{\htp}(\EuclStk) \to \Spc$ induce an equivalence $\Shv^{\htp}(\EuclStk) \iso \PSh(\Glo^{\Stk})$. By 2-out-of-3, it then remains to show that also $\PSh^{\htp}(\EuclStk) \to \PSh(\Glo^{\Stk})$ is an equivalence. The argument is identical to that of \Cref{thm:Homotopy_Invariant_Sheaves_Form_Presheaf_Category}, after observing the following chain of equivalences for every homotopy invariant sheaf $\Ff$:
        \begin{align*}
            \ev_{\bbB G} \Ff &\simeq \Hom_{\PSh(\EuclStk)}(\iota^*(\bbB G),\Ff) \\
            &\simeq \Hom_{\PSh^{\htp}(\EuclStk)}(\iota^* L_{\R}(\bbB G),\Ff) \\
            &\overset{\ref{lem:Comparison_LR_Lhtp_On_EuclStk}}{\simeq} \Hom_{\PSh^{\htp}(\EuclStk)}(\iota^* L_{\htp}(\bbB G),\Ff).\qedhere
        \end{align*}
	\end{proof}
	
	\subsection{Global spaces}\label{sec:global-spaces}
	In this section we recall the definition of the \textit{global indexing category} $\Glo$, and show that it is equivalent to the stacky global indexing category $\Glo^{\Stk}$ from \Cref{def:Stacky_Global_Indexing_Category}. As a consequence, we obtain an equivalence $\Shv^{\htp}(\SepStk) \simeq \Glo\Spc$ between the $\infty$-category of homotopy invariant sheaves on $\SepStk$ and the $\infty$-category of global spaces.
	
	While there exist various models for $\Glo$ in the literature, it will be most convenient for us to work with the model considered by Gepner and Meier in \cite[Definition~2.7]{gepnerMeier2020equivariant}. By \cite[Appendix B]{gepnerMeier2020equivariant}, their model agrees with the other models of $\Glo$ in the literature. We start by recalling the following general construction from \cite{gepnerMeier2020equivariant}:
	
	\begin{construction}
		\label[construction]{cons:Gepner_Meier_Construction}
		Let $\Cc$ be an $\infty$-category admitting finite products and let $\Delta^{\bullet}\colon \Delta \to \Cc$ be a cosimplicial object. Then \cite[Construction~A.6]{gepnerMeier2020equivariant} construct an $\infty$-category $\Cc_{\abs{\Delta}}$ which has the same objects as $\Cc$ but whose mapping spaces are given by
		\[
		\Hom_{\Cc_{\abs{\Delta}}}(X,Y) = \colim_{[n] \in \Delta\catop}\Hom_{\Cc}(X \times \Delta^n, Y).
		\]
		We may construct this $\infty$-category as the colimit $\Cc_{\abs{\Delta}} := \colim_{[n] \in \Delta\catop} \Cc[\Delta^n]$ in $\Cat_{\infty}$, where $\Cc[\Delta^n] \subseteq \Cc_{/\Delta^n}$ is the full subcategory spanned by objects of the form $\pr_2\colon X \times \Delta^n \to \Delta^n$. Since the mapping space in $\Cc[\Delta^n]$ between $X \times \Delta^n$ and $Y \times \Delta^n$ is given by $\Hom_{\Cc}(X \times \Delta^n, Y)$, the results of \cite[Appendix~A]{gepnerMeier2020equivariant} provide the above formula for the mapping spaces of $\Cc_{\abs{\Delta}}$.
		
		If the object $\Delta^0$ is terminal in $\Cc$, then we have $\Cc[\Delta^0] \simeq \Cc$, and hence we obtain a functor
		\[
		L_{\abs{\Delta}}\colon \Cc \to \Cc_{\abs{\Delta}}.
		\]
		The construction of this functor is functorial in the pair $(\Cc,\Delta^{\bullet})$: if $F\colon \Cc \to \Dd$ is a finite-product-preserving functor, then equipping $\Dd$ with the composite cosimplicial object $\Delta \to \Cc \xrightarrow{F} \Dd$ we obtain a functor $F_{\abs{\Delta}}\colon \Cc_{\abs{\Delta}} \to \Dd_{\abs{\Delta}}$ fitting in the following commutative diagram:
		\[
		\begin{tikzcd}
			\Cc \dar[swap]{L_{\abs{\Delta}}} \rar{F} & \Dd \dar{L_{\abs{\Delta}}} \\
			\Cc_{\abs{\Delta}} \rar[dashed]{F_{\abs{\Delta}}} & \Dd_{\abs{\Delta}}.
		\end{tikzcd}
		\]
		Informally, the functor $F_{\abs{\Delta}}$ is given on objects by $F$ and on morphisms by taking the geometric realization of the simplicial map
		\[
		\Hom_{\Cc}(X \times \Delta^n, Y) \to \Hom_{\Dd}(F(X \times \Delta^n), F(Y)) \simeq \Hom_{\Dd}(F(X) \times \Delta^n, F(Y)).
		\]
		Observe that $F_{\abs{\Delta}}$ is fully faithful whenever $F$ is fully faithful.
	\end{construction}
	
	\begin{convention}
		In the case of $\Cc = \Mfld$, we will always choose the cosimplicial object $\Deltaalg{\bullet}\colon \Delta \to \Mfld$ given by the algebraic simplices from \Cref{def:Algebraic_Simplices}. Using the inclusions $\Mfld \hookrightarrow \SepStk \hookrightarrow \Shv(\Mfld) \hookrightarrow \Shv(\SepStk)$ we obtain cosimplicial objects in each of these $\infty$-categories. For $\Cc = \Top$, the category of topological spaces, we will also choose $\Deltaalg{\bullet}$ for convenience, but note that this agrees up to cosimplicial homotopy with the usual functor $\Delta^{\bullet}\colon \Delta \to \Top$.
	\end{convention}
	
	\begin{definition}[Global spaces, {\cite[Definitions~2.7 and 2.8]{gepnerMeier2020equivariant}}]
		We denote by $\Glo$ the full subcategory of $\Shv(\Top)_{\abs{\Delta}}$ spanned by the objects $L_{\abs{\Delta}}(\bbB G)$ for all compact Lie groups $G$, and refer to it as the \textit{(topological) global indexing category}.\footnote{The $\infty$-category $\Glo$ is sometimes also denoted by $\Orb$.} A \textit{global space} is a presheaf $\Glo\catop \to \Spc$, and we let $\Glo\Spc$ denote the $\infty$-category of all presheaves on $\Glo$.
		
		Similarly, we let $\Glo^{\Mfld} \subseteq \Shv(\Mfld)_{\abs{\Delta}} \subseteq \Shv(\SepStk)_{\abs{\Delta}}$ denote the full subcategory spanned by the objects $B_{\gl}G \coloneqq L_{\abs{\Delta}}(\bbB G)$ for all compact Lie groups $G$, and refer to it as the \textit{smooth global indexing category}.
	\end{definition}
	
	\begin{proposition}
		\label[proposition]{prop:Glo_Mfld_Vs_Glo_Top}
		The forgetful functor $\Mfld \to \Top$ induces an equivalence of $\infty$-categories $\Glo^{\Mfld} \iso \Glo$.
	\end{proposition}
	\begin{proof}
		The forgetful functor $\Mfld \to \Top$ preserves open covers, and thus induces a geometric morphism $\Shv(\Top) \to \Shv(\Mfld)$ of $\infty$-topoi. The left adjoint $\Shv(\Mfld) \to \Shv(\Top)$ is the unique colimit-preserving functor which makes the following diagram commute:
		\[
		\begin{tikzcd}
			\Mfld \rar \dar[hookrightarrow] & \Top \dar[hookrightarrow] \\
			\Shv(\Mfld) \rar[dashed] & \Shv(\Top).
		\end{tikzcd}
		\]
		In particular, it preserves the preferred cosimplicial objects $\Deltaalg{\bullet}$, and since it preserves finite products it induces a functor $\Shv(\Mfld)_{\abs{\Delta}} \to \Shv(\Top)_{\abs{\Delta}}$. Since the functor $\Shv(\Mfld) \to \Shv(\Top)$ preserves the classifying stacks of compact Lie groups, it restricts to a functor
		\[
		\Glo^{\Mfld} \to \Glo.
		\]
		We will show that this functor is an equivalence. The functor is clearly essentially surjective, so it remains to show it is fully faithful. By the description of the mapping spaces on both sides, this means we have to show that for all compact Lie groups $H$ and $G$ the maps
		\[
		\Hom_{\Shv(\Mfld)}(\Deltaalg{n} \times \bbB G, \bbB H) \to \Hom_{\Shv(\Top)}(\Deltaalg{n} \times \bbB G, \bbB H)
		\]
		induce an equivalence on geometric realizations. Both of these geometric realizations may be computed as geometric realizations of topological groupoids: in the first case we take the action groupoid of the topological $H$-space $\Hom_{\mathrm{LieGrp}}(G,H)$ of Lie group morphisms $G \to H$ and in the second case of the topological $H$-space $\Hom_{\mathrm{TopGrp}}(G,H)$ of continuous group morphisms $G \to H$. But these agree as any continuous group homomorphism between Lie groups is already smooth, see e.g.\ \cite[Proposition~3.12]{BroeckerTomDieck1985Representations}.
	\end{proof}
	
	\begin{proposition}
		\label[proposition]{prop:Glo_Mfld_Vs_Glo_Stk}
		There is an equivalence of $\infty$-categories $\Glo^{\Mfld} \iso \Glo^{\Stk}$.
	\end{proposition}
	\begin{proof}
		Consider the cosimplicial object $L_{\htp}(\Deltaalg{\bullet}) \colon \Delta \to \Shv^{\htp}(\SepStk)$, giving rise to an $\infty$-category $\Shv^{\htp}(\SepStk)_{\abs{\Delta}}$ via \Cref{cons:Gepner_Meier_Construction}. Let $\Glo' \subseteq \Shv^{\htp}(\SepStk)_{\abs{\Delta}}$ denote the full subcategory spanned by the objects of the form $L_{\abs{\Delta}}L_{\htp}\bbB G$ for all compact Lie groups $G$. Consider now the two functors
		\[
		\Shv^{\htp}(\SepStk) \xrightarrow{\;\;L_{\abs{\Delta}}\;\;} \Shv^{\htp}(\SepStk)_{\abs{\Delta}} \xleftarrow{\;\;(L_{\htp})_{\abs{\Delta}}\;\;} \Shv(\SepStk)_{\abs{\Delta}},
		\]
		where the second functor is induced by the localization functor $L_{\htp}\colon \Shv(\SepStk) \to \Shv^{\htp}(\SepStk)$. We claim that these two functors restrict to equivalences
		\[
		\Glo^{\Stk} \xrightarrow[\sim]{\;\;\;L_{\abs{\Delta}}\;\;\;} \Glo' \xleftarrow[\sim]{\;\;(L_{\htp})_{\abs{\Delta}}\;\;} \Glo^{\Mfld}.
		\]
		It is immediate that both functors are essentially surjective, so we must show that they are fully faithful. For $L_{\abs{\Delta}}$, we need to show that for compact Lie groups $H$ and $G$, the canonical functor
		\[
		\Hom_{\Shv^{\htp}(\SepStk)}(L_{\htp}\bbB H, L_{\htp} \bbB G) \to \colim_{[n] \in \Delta\catop} \Hom_{\Shv^{\htp}(\SepStk)}(L_{\htp}(\bbB H \times \Deltaalg{n}), L_{\htp} \bbB G)
		\]
		is an equivalence. But by adjunction and homotopy invariance of $L_{\htp}(\bbB G)$, we see that the colimit on the right-hand side is a simplicial colimit of a constant simplicial diagram, so that the claim follows. For $(L_{\htp})_{\abs{\Delta}}$, we must show that for compact Lie groups $H$ and $G$, the maps
		\[
		\Hom_{\Shv(\SepStk)}(\bbB H \times \Deltaalg{n}, \bbB G) \xrightarrow{L_{\htp}} \Hom_{\Shv^{\htp}(\SepStk)}(L_{\htp}(\bbB H \times \Deltaalg{n}), L_{\htp}(\bbB G))
		\]
		induce an equivalence on geometric realizations. By \Cref{lem:Comparison_LR_Lhtp_On_EuclStk}, we may identify the geometric realization of the left-hand side with $\Hom_{\Shv(\SepStk)}(\bbB H, L_{\htp}\bbB G)$ and that of the right-hand side with $\Hom_{\Shv(\SepStk)}(\bbB H, L_{\htp}(L_{\htp}\bbB G))$, with the comparison map being given by postcomposition with the unit map $L_{\htp}(\eta)\colon L_{\htp}(\bbB G) \to L_{\htp}(L_{\htp}(\bbB G))$. Since $L_{\htp}$ is a Bousfield localization, this map is an equivalence, finishing the proof.
	\end{proof}
	
	Combining all results established so far, we obtain our main result:
	\begin{theorem}
		\label[theorem]{thm:MainTheorem}
		There exists an equivalence of $\infty$-categories $\Shv^{\htp}(\SepStk) \iso \Glo\Spc$.
	\end{theorem}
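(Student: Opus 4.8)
The plan is to assemble the equivalence purely formally from the results established above; at this point no new ideas are required. First I would invoke \Cref{thm:Homotopy_Invariant_Sheaves_Form_Presheaf_Category}, which supplies an equivalence of $\infty$-categories $\Shv^{\htp}(\SepStk) \iso \PSh(\Glo^{\Stk})$, induced by mapping out of the objects $L_{\htp}(\bbB G)$. Next I would chain together the two comparison results for the indexing categories: \Cref{prop:Glo_Mfld_Vs_Glo_Stk} gives a preferred equivalence $\Glo^{\Mfld} \iso \Glo^{\Stk}$, and \Cref{prop:Glo_Mfld_Vs_Glo_Top} gives a preferred equivalence $\Glo^{\Mfld} \iso \Glo$, so that altogether $\Glo^{\Stk} \simeq \Glo$.

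Since the presheaf-category construction $\Cc \mapsto \PSh(\Cc) = \Fun(\Cc\catop, \Grpdinfty)$ carries equivalences of $\infty$-categories to equivalences, the equivalence $\Glo^{\Stk} \simeq \Glo$ induces an equivalence $\PSh(\Glo^{\Stk}) \iso \PSh(\Glo)$. Composing, one obtains
\[
\Shv^{\htp}(\SepStk) \iso \PSh(\Glo^{\Stk}) \iso \PSh(\Glo) = \Glo\Spc,
\]
where the final identification is simply the definition of $\Glo\Spc$ as the $\infty$-category of presheaves on $\Glo$. This completes the argument, and the resulting equivalence is canonical since each of the three displayed equivalences is.

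The only point worth remarking is that essentially all of the mathematical content has already been spent before reaching this statement: the genuinely nontrivial inputs are \Cref{thm:Homotopy_Invariant_Sheaves_Form_Presheaf_Category} — whose proof passes through the generalized Elmendorf theorem and relies on the joint conservativity of the $\ev_{\bbB G}$ (\Cref{prop:OrbitsAreConservative}) together with the complete compactness of the objects $L_{\htp}(\bbB G)$ (\Cref{cor:EvaluationOnOrbitsColimitPreserving}) — and the identifications of the stacky, smooth, and topological global indexing categories in \Cref{prop:Glo_Mfld_Vs_Glo_Stk,prop:Glo_Mfld_Vs_Glo_Top}, the latter ultimately resting on the automatic smoothness of continuous homomorphisms between Lie groups. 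Granting those, the present theorem is a one-line composition, so I do not anticipate any obstacle in this final step.
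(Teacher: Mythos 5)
Your proposal is correct and matches the paper's own proof essentially verbatim: both combine \Cref{thm:Homotopy_Invariant_Sheaves_Form_Presheaf_Category} with the chain of equivalences $\Glo^{\Stk} \simeq \Glo^{\Mfld} \simeq \Glo$ from \Cref{prop:Glo_Mfld_Vs_Glo_Stk} and \Cref{prop:Glo_Mfld_Vs_Glo_Top}, and then pass to presheaf categories. No issues.
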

	\begin{proof}
		We showed in \Cref{thm:Homotopy_Invariant_Sheaves_Form_Presheaf_Category} that restricting the Yoneda embedding along the inclusion $\Glo^{\Stk} \hookrightarrow \Shv^{\htp}(\SepStk)$ induces an equivalence $\Shv^{\htp}(\SepStk) \iso \PSh(\Glo^{\Stk})$. Combining this with \Cref{prop:Glo_Mfld_Vs_Glo_Top} and \Cref{prop:Glo_Mfld_Vs_Glo_Stk}, we thus obtain a chain of equivalences
		\[
		\Shv^{\htp}(\SepStk) \iso \PSh(\Glo^{\Stk}) \isol \PSh(\Glo^{\Mfld}) \iso \PSh(\Glo) = \Glo\Spc. \qedhere
		\]
	\end{proof}
	
	In light of \Cref{thm:MainTheorem}, we may assign to every separated stack its \textit{global homotopy type}:
	
	\begin{definition}
		We define the \textit{global homotopy type functor} as the composite
		\[
		\Piinfty^{\Glo}\colon \Shv(\SepStk) \xrightarrow{L_{\htp}} \Shv^{\htp}(\SepStk) \overset{\ref{thm:MainTheorem}}{\simeq} \Glo\Spc.
		\]
	\end{definition}

	Unwinding the definitions, the global homotopy type of a separated stack $\Xx$ is given on objects by homotopy types of mapping stacks, as stated in the introduction: for every compact Lie group $G$, we have
	\[
		\Piinfty^{\Glo}(\Xx)(\bbB G)\;\simeq\;\Piinfty\,\iHom_{\Shv(\Mfld)}(\bbB G,\Xx).
	\]
	Indeed, under \Cref{thm:MainTheorem} the value at $\bbB G$ is the evaluation $\ev_{\bbB G}$, so $\Piinfty^{\Glo}(\Xx)(\bbB G)\simeq(L_{\htp}\Xx)(\bbB G)$. By \Cref{lem:Comparison_LR_Lhtp_On_EuclStk} this agrees with $(L_{\R}\Xx)(\bbB G)\simeq\abs{[n]\mapsto\Xx(\bbB G\times\Deltaalg{n})}$. As $\bbB G$, $\Deltaalg{n}$ and $\Xx$ all lie in $\Shv(\Mfld)\subseteq\Shv(\SepStk)$, we have $\Xx(\bbB G\times\Deltaalg{n})\simeq\iHom_{\Shv(\Mfld)}(\bbB G,\Xx)(\Deltaalg{n})$, and this geometric realization computes $\Piinfty\,\iHom_{\Shv(\Mfld)}(\bbB G,\Xx)$ by \cite[Proposition~5.1.2]{ADH2021differential}.
	
	\begin{remark}
		We immediately obtain definitions for the global homotopy types of topological and differentiable stacks via precomposition with the geometric morphisms
		\[
		\fgt^*\colon \Shv(\Top) \to \Shv(\Mfld) \qquad \text{ and } \qquad w^!\colon \Shv(\Mfld) \hookrightarrow \Shv(\SepStk),
		\]
		where the first one is given by restriction along the forgetful functor $\fgt\colon \Mfld \to \Top$. Because of the equivalences
		\[
		\Hom_{\Shv(\Top)}(\bbB G \times \Deltaalg{n}, \Xx) \simeq \Hom_{\Shv(\Mfld)}(\bbB G \times \Deltaalg{n}, \fgt^* \Xx) \simeq \Hom_{\Shv(\SepStk)}(\bbB G \times \Deltaalg{n}, w^!\fgt^*\Xx),
		\]
		the resulting functor $\Shv(\Top) \to \Glo\Spc$ agrees with the one defined by \cite[Remark~2.9]{gepnerMeier2020equivariant}.
		
		We warn the reader that the resulting global homotopy type functors on $\Shv(\Top)$ and $\Shv(\Mfld)$ do \textit{not} satisfy descent with respect to arbitrary effective epimorphisms.
	\end{remark}
	
	Recall that the space of \textit{$G$-fixed points} $X^G$ of a global space $X$ is simply defined as its value at the global classifying space $B_{\gl} G \in \Glo$. We will now show that the space of $G$-fixed points of the global homotopy type $\Piinfty^{\Glo}(\Xx)$ of a separated stack $\Xx$ is given by the homotopy type of the $G$-isotropy stack $\Xx^G \in \Shv(\Mfld)$:
	
	\begin{proposition}
		For a compact Lie group $G$, there is a commutative square
		\[
		\begin{tikzcd}
			\Shv(\SepStk) \rar{\Piinfty^{\Glo}} \dar[swap]{(-)^G} & \Glo\Spc \dar{(-)^G} \\
			\Shv(\Mfld) \rar{\Piinfty} & \Spc,
		\end{tikzcd}
		\]
		where the bottom functor $\Piinfty$ is the left adjoint of the constant sheaf functor $\Gamma^*\colon \Spc \hookrightarrow \Shv(\Mfld)$.
	\end{proposition}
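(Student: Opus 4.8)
The plan is to rewrite the two composites around the square as explicit chains of functors built from the homotopy localization $L_{\htp}$, the isotropy functor $(-)^G$, and the global sections functor $\Gamma_*$, and then to match them using the results already established. Write $E\colon \Shv^{\htp}(\SepStk) \iso \Glo\Spc$ for the equivalence of \Cref{thm:MainTheorem}, so that $\Gamma^{\Glo}_{\sharp} = E \circ L_{\htp}$. \emph{Step 1 (the top–right composite).} Tracing through the constructions in \Cref{prop:Glo_Mfld_Vs_Glo_Top} and \Cref{prop:Glo_Mfld_Vs_Glo_Stk}, the global classifying space $B_{\gl} G$ corresponds under $E$ to the object $L_{\htp}\bbB G \in \Glo^{\Stk} \subseteq \Shv^{\htp}(\SepStk)$. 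Since the $G$-fixed point functor on $\Glo\Spc$ is by definition evaluation at $B_{\gl} G$, and evaluation at a representable presheaf is corepresented by the corresponding object under the Elmendorf-type equivalence $\PSh(\Glo^{\Stk}) \simeq \Shv^{\htp}(\SepStk)$, the functor $(-)^G\colon \Glo\Spc \to \Grpdinfty$ is carried by $E$ to $\Hom_{\Shv^{\htp}(\SepStk)}(L_{\htp}\bbB G, -)$. By the representability observation used in the proof of \Cref{thm:Homotopy_Invariant_Sheaves_Form_Presheaf_Category}, this is precisely $\ev_{\bbB G}$. Hence $(-)^G \circ \Gamma^{\Glo}_{\sharp} \simeq \ev_{\bbB G} \circ L_{\htp}\colon \Shv(\SepStk) \to \Grpdinfty$.

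\emph{Step 2 (commuting $(-)^G$ past $L_{\htp}$).} As recorded in the proof of \Cref{cor:EvaluationOnOrbitsColimitPreserving}, on $\Shv^{\htp}(\SepStk)$ one has $\ev_{\bbB G} \simeq \Gamma_* \circ (-)^G$, so the composite of Step 1 becomes $\Gamma_* \circ (-)^G \circ L_{\htp}$. The main point is that $(-)^G$ commutes with $L_{\htp}$, i.e.\ $(-)^G \circ L_{\htp} \simeq L_{\htp} \circ (-)^G$ as functors $\Shv(\SepStk) \to \Shv^{\htp}(\Mfld)$. To see this, recall from \Cref{prop:HFixedPointsPreservesEverything} that $(-)^G\colon \Shv(\SepStk) \to \Shv(\Mfld)$ preserves colimits and finite products and preserves homotopy-invariant sheaves; moreover, since $\R^G \simeq \R$ in $\Shv(\Mfld)$, it sends each generating $\R$-equivalence $\Xx \times \R \to \Xx$ to $\Xx^G \times \R \to \Xx^G$, which is again an $L_{\htp}$-equivalence (the class of $Y \in \Shv(\Mfld)$ with $Y \times \R \to Y$ an $L_{\htp}$-equivalence is closed under colimits and contains all manifolds, hence is everything). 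A colimit-preserving functor that sends a generating set of a strongly saturated class of $L_{\htp}$-equivalences into $L_{\htp}$-equivalences inverts every $L_{\htp}$-equivalence; thus $L_{\htp} \circ (-)^G$ factors through $L_{\htp}$, and the induced functor on homotopy-invariant sheaves agrees with the restriction of $(-)^G$ because $(-)^G$ already preserves homotopy invariance. This is the desired commutation, by the universal property of the Bousfield localization $L_{\htp}$.

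\emph{Step 3 (identifying $\Gamma_\sharp$ and concluding).} It remains to recognize $\Gamma_* \circ L_{\htp}\colon \Shv(\Mfld) \to \Grpdinfty$ as the left adjoint $\Gamma_\sharp$ of the constant sheaf functor $\Gamma^*$. By \cite[Proposition~4.3.1]{ADH2021differential}, $\Gamma^*$ is fully faithful with essential image the homotopy-invariant sheaves and with inverse $\Gamma_*$; hence $\Gamma^*$ factors as $\Grpdinfty \iso \Shv^{\htp}(\Mfld) \hookrightarrow \Shv(\Mfld)$, whose left adjoint is $\Gamma_* \circ L_{\htp}$. Combining Steps 1–3 yields a natural equivalence $(-)^G \circ \Gamma^{\Glo}_{\sharp} \simeq \Gamma_\sharp \circ (-)^G$, which is the commutativity of the square. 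As a pointwise sanity check, $\bbB G$ is a Euclidean stack, so \Cref{cor:Value_Of_Homotopy_Localization_At_Euclidean_Stack} gives $(\Gamma^{\Glo}_{\sharp}\Ff)^G \simeq L_{\htp}(\Ff)(\bbB G) \simeq \colim_{[n] \in \Delta\catop} \Ff(\bbB G \times \Deltaalg{n})$, and the Morel–Voevodsky formula on $\Mfld$ computes $\Gamma_\sharp(\Ff^G)$ as the same colimit $\colim_{[n] \in \Delta\catop} \Ff^G(\Deltaalg{n}) = \colim_{[n] \in \Delta\catop} \Ff(\bbB G \times \Deltaalg{n})$.

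I expect the main obstacle to be the coherence bookkeeping in Step 1 — checking that the chain of equivalences comprising $E$ genuinely identifies $L_{\htp}\bbB G$ with $B_{\gl} G$ compatibly with the two $G$-fixed-point functors — together with the universal-property argument in Step 2; Step 3 is routine cohesion.
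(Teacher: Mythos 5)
Your proof is correct and follows essentially the same route as the paper, which factors the square into three smaller squares: the commutation of $(-)^G$ with $L_{\htp}$ (your Step 2), the identification of evaluation at $B_{\gl}G$ with $\ev_{\bbB G}$ under the Elmendorf-type equivalence (your Step 1), and the identification of $\Gamma_\sharp$ with $\Gamma_*\circ L_{\htp}$ (your Step 3). The only cosmetic difference is that you re-derive the commutation $(-)^G \circ L_{\htp} \simeq L_{\htp}\circ (-)^G$ from the universal property of the Bousfield localization, whereas the paper simply cites \Cref{prop:HFixedPointsPreservesEverything}, which obtains it from the presheaf-level Morel--Voevodsky formula.
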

	\begin{proof}
		This follows from the following commutative diagram:
		\[
		\begin{tikzcd}
			\Shv(\SepStk) \rar{L_{\htp}} \dar[swap]{(-)^G} & \Shv^{\htp}(\SepStk) \dar[swap]{(-)^G} \rar{\simeq} & \PSh(\Glo^{\Stk}) \dar{(-)^G} \rar{\simeq} & \PSh(\Glo) \dar{(-)^G}\\
			\Shv(\Mfld) \rar{L_{\htp}} & \Shv^{\htp}(\Mfld) \rar{\simeq} & \Spc \rar[equal] & \Spc.
		\end{tikzcd}
		\]
		Here the left square commutes by \Cref{prop:HFixedPointsPreservesEverything}, the middle square commutes since by definition both composites are given by evaluation at $L_{\htp}(\bbB G)$, and the right square commutes by inspection.
	\end{proof}
	
	\subsection{Restriction of isotropy groups}\label{subsec:isotropy}
	\Cref{thm:MainTheorem} admits a variant in which we restrict the class of isotropy groups that are allowed. Recall that the \textit{isotropy group} $G_x$ of a separated stack $\Xx$ at a point $x\colon \pt \to \Xx$ is defined as the following pullback in $\SepStk$:
	\[
	\begin{tikzcd}
		G_x \dar \rar \drar[pullback] & \pt \dar{x} \\
		\pt \rar{x} & \Xx.
	\end{tikzcd}
	\]
	The stack $G_x$ is a compact smooth manifold by applying \cite[Theorem~5.4]{MoerdijkMrcun2003Foliations} to a Lie groupoid representing $\Xx$; compactness of $G_x$ follows from separatedness of $\Xx$. It comes with a canonical group structure turning it into a compact Lie group. See also \cite[Lemmas~3.4.2 and 3.4.3]{Cnossen2023PhD} for additional details.
	
	\begin{definition}
		Let $\Ff$ be a collection of compact Lie groups closed under isomorphisms. We denote by $\SepStk_{\Ff} \subseteq \SepStk$ the full subcategory spanned by those separated stacks all of whose isotropy groups lie in $\Ff$, and we write $\EuclStk_{\Ff} := \EuclStk \cap \SepStk_{\Ff}$ for the subcategory of Euclidean stacks of the form $V\quot G$ for some group $G \in \Ff$ and a $G$-representation $V$ all of whose isotropy groups lie in $\Ff$. We let $\Glo_{\Ff} \subseteq \Glo$ denote the full subcategory spanned by objects of the form $B_{\gl}G$ for $G \in \Ff$.
	\end{definition}
	
	Since any open substack induces isomorphisms at the level of isotropy groups, the subcategory $\SepStk_{\Ff}$ is closed under passing to open substacks, and in particular inherits a Grothendieck topology from $\SepStk$.
	
	\begin{corollary}
		\label[corollary]{cor:Main_Result_For_Families}
		For a collection of compact Lie groups $\Ff$, the equivalence from \Cref{thm:MainTheorem} restricts to an equivalence
		\[
		\Shv^{\htp}(\SepStk_{\Ff}) \iso \PSh(\Glo_{\Ff}).
		\]
	\end{corollary}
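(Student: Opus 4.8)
The plan is to revisit the proof of \Cref{thm:MainTheorem} and check that each step in the chain $\Shv^{\htp}(\SepStk)\simeq\PSh(\Glo^{\Stk})\simeq\PSh(\Glo^{\Mfld})\simeq\PSh(\Glo)$ has an analogue after restricting the allowed isotropy to $\Ff$. Since the equivalence of \Cref{thm:MainTheorem} is implemented by the evaluation functors $\ev_{\bbB G}$ (composed with the identification $\Glo^{\Stk}\simeq\Glo$), carrying out each step in a compatible way will in particular show that the resulting equivalence $\Shv^{\htp}(\SepStk_{\Ff})\simeq\PSh(\Glo_{\Ff})$ is again implemented by $\ev_{\bbB G}$ for $G\in\Ff$, which is the sense in which \Cref{thm:MainTheorem} ``restricts''.

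First I would record the geometric input. By \Cref{prop:Linearization_Theorem}, every separated stack $\Xx\in\SepStk_{\Ff}$ is covered by Euclidean substacks $V\quot G$; as an open substack induces isomorphisms on isotropy groups, both the group $G$ (the isotropy of $V\quot G$ at the origin of $V$) and all isotropy groups of $V\quot G$ occur among the isotropy groups of $\Xx$, hence lie in $\Ff$. Thus $\EuclStk_{\Ff}$ is a basis for the Grothendieck topology on $\SepStk_{\Ff}$. The argument showing that $\Shv(\SepStk)$ is hypercomplete applies verbatim to $\Shv(\SepStk_{\Ff})$, since it ultimately reduces to the hypercompleteness of $\Shv(V)$ for a finite-dimensional vector space $V$, which does not see $\Ff$; hence the $\Ff$-analogues of \Cref{cor:Generated_By_Euclidean_Stacks} and \Cref{rmk:Test_Homotopy_Invariance_On_EuclStk} give $\Shv(\SepStk_{\Ff})\simeq\Shv(\EuclStk_{\Ff})$ and $\Shv^{\htp}(\SepStk_{\Ff})\simeq\Shv^{\htp}(\EuclStk_{\Ff})$.

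Next I would set up the isotropy functors on $\SepStk_{\Ff}$. For $G\in\Ff$ the functor $\bbB G\times-$ lands in $\SepStk_{\Ff}$ (every isotropy group of $\bbB G\times M$ is a copy of $G$), is a morphism of sites which is both continuous and cocontinuous, and the Morel--Voevodsky formula of \Cref{prop:Homotopy_Localization_Is_Morel_Voevodsky} holds for presheaves on $\SepStk_{\Ff}$ with the same proof (each $\Xx\times\Deltaalg{n}$ again lies in $\SepStk_{\Ff}$). Hence the $\Ff$-analogue of \Cref{prop:HFixedPointsPreservesEverything} holds, and in particular $(-)^G$ descends to a colimit-preserving functor $\Shv^{\htp}(\SepStk_{\Ff})\to\Shv^{\htp}(\Mfld)$; composing with the equivalence $\Gamma_*\colon\Shv^{\htp}(\Mfld)\iso\Grpdinfty$ gives, as in \Cref{cor:EvaluationOnOrbitsColimitPreserving}, that $\ev_{\bbB G}\colon\Shv^{\htp}(\SepStk_{\Ff})\to\Grpdinfty$ preserves colimits for $G\in\Ff$, i.e.\ that $L_{\htp}(\bbB G)$ is completely compact. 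Joint conservativity of the $\ev_{\bbB G}$, $G\in\Ff$, follows as in \Cref{prop:OrbitsAreConservative}: via $\Shv^{\htp}(\SepStk_{\Ff})\simeq\Shv^{\htp}(\EuclStk_{\Ff})$ it suffices to test on the objects $V\quot G$ of $\EuclStk_{\Ff}$, and the homotopy equivalence $V\quot G\to\bbB G$ (a morphism of $\SepStk_{\Ff}$, with $G\in\Ff$) identifies $\ev_{V\quot G}$ with $\ev_{\bbB G}$ on homotopy invariant sheaves.

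With these ingredients, the generalized Elmendorf theorem \cite[Theorem~3.39]{linskensNardinPol2022Global}, applied as in \Cref{thm:Homotopy_Invariant_Sheaves_Form_Presheaf_Category}, yields $\Shv^{\htp}(\SepStk_{\Ff})\simeq\PSh(\Glo^{\Stk}_{\Ff})$, where $\Glo^{\Stk}_{\Ff}\subseteq\Shv^{\htp}(\SepStk_{\Ff})$ is the full subcategory on the $L_{\htp}(\bbB G)$, $G\in\Ff$; the same argument also gives the $\Ff$-analogues of \Cref{thm:Homotopy_Invariant_Presheaf_On_EuclStk_Is_Sheaf} and \Cref{cor:Value_Of_Homotopy_Localization_At_Euclidean_Stack}. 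Because $\SepStk_{\Ff}$ is a \emph{full} subcategory of $\SepStk$, the latter identifies the mapping spaces of $\Glo^{\Stk}_{\Ff}$ with the geometric realizations of $[n]\mapsto\Hom_{\SepStk}(\bbB H\times\Deltaalg{n},\bbB G)$, exactly as for $\Glo^{\Stk}$, so $\Glo^{\Stk}_{\Ff}$ is equivalent to the full subcategory of $\Glo^{\Stk}$ on the $L_{\htp}(\bbB G)$ with $G\in\Ff$. Since the equivalences of \Cref{prop:Glo_Mfld_Vs_Glo_Stk} and \Cref{prop:Glo_Mfld_Vs_Glo_Top} act on objects by $L_{\htp}(\bbB G)\mapsto B_{\gl}G\mapsto L_{\abs{\Delta}}(\bbB G)$, and $\Glo_{\Ff}$ is by definition the full subcategory of $\Glo$ on the $B_{\gl}G$, $G\in\Ff$, restricting them to the objects indexed by $\Ff$ gives $\Glo^{\Stk}_{\Ff}\simeq\Glo^{\Mfld}_{\Ff}\simeq\Glo_{\Ff}$, whence $\Shv^{\htp}(\SepStk_{\Ff})\simeq\PSh(\Glo_{\Ff})$. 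The only genuinely delicate point, I expect, is the geometric step above --- that the linearization theorem produces charts lying in $\EuclStk_{\Ff}$ and not merely in $\EuclStk$ --- which is precisely the statement that an open substack inherits the isotropy groups of the ambient stack.
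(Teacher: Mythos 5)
Your proof is correct, but it takes a genuinely different (and much longer) route than the paper's. The paper's own proof never re-runs the Elmendorf-type argument for the subsite: it restricts the basis equivalence to get $\Shv^{\htp}(\SepStk_{\Ff}) \simeq \Shv^{\htp}(\EuclStk_{\Ff})$, observes that the latter sits inside $\Shv^{\htp}(\EuclStk)$ as the subcategory generated under colimits by the objects $L_{\htp}(V\quot G)$ with $V\quot G \in \EuclStk_{\Ff}$ --- which agree up to homotopy with the $L_{\htp}(\bbB G)$ for $G \in \Ff$ --- and then matches this colimit-closure, under the already-established equivalence $\Shv^{\htp}(\EuclStk) \simeq \PSh(\Glo)$, with the subcategory $\PSh(\Glo_{\Ff}) \subseteq \PSh(\Glo)$ generated by the $B_{\gl}G$, $G \in \Ff$. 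You instead redo all of Section~\ref{sec:SepStk} with isotropy restricted to $\Ff$: hypercompleteness, the isotropy functors, joint conservativity and complete compactness of the $L_{\htp}(\bbB G)$, the generalized Elmendorf theorem, and the comparison of indexing categories. This costs more work but makes explicit what the paper's phrase ``restricts to an equivalence'' leaves implicit --- in particular that the mapping spaces of $\Glo^{\Stk}_{\Ff}$, computed inside $\Shv^{\htp}(\SepStk_{\Ff})$, coincide with those computed inside $\Shv^{\htp}(\SepStk)$ (via fullness of $\SepStk_{\Ff} \subseteq \SepStk$ and the $\Ff$-analogue of \Cref{cor:Value_Of_Homotopy_Localization_At_Euclidean_Stack}), where the paper instead relies on full faithfulness of the colimit-extension $\Shv^{\htp}(\EuclStk_{\Ff}) \hookrightarrow \Shv^{\htp}(\EuclStk)$. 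Both arguments rest on the same geometric input, which you correctly isolate as the delicate point: the linearization charts of a stack in $\SepStk_{\Ff}$ automatically lie in $\EuclStk_{\Ff}$ because open substacks inherit the isotropy groups of the ambient stack.
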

	\begin{proof}
		A separated stack $\Xx$ is contained in $\SepStk_{\Ff}$ if and only if it can be covered by open substacks contained in $\EuclStk_{\Ff}$, and thus the equivalence $\Shv^{\htp}(\SepStk) \iso \Shv^{\htp}(\EuclStk)$ restricts to an equivalence $\Shv^{\htp}(\SepStk_{\Ff}) \iso \Shv^{\htp}(\EuclStk_{\Ff})$. The subcategory $\Shv^{\htp}(\EuclStk_{\Ff}) \hookrightarrow \Shv^{\htp}(\EuclStk)$ is generated under colimits by the objects $L_{\htp}(V \quot G)$ for $V\quot G \in \EuclStk_{\Ff}$, which agree up to homotopy with the objects $L_{\htp}(\bbB G)$ for $G \in \Ff$. Passing through the equivalence $\Shv^{\htp}(\EuclStk) \iso \PSh(\Glo)$, this subcategory thus corresponds to the subcategory $\PSh(\Glo_{\Ff}) \subseteq \PSh(\Glo^{\Stk})$ generated by the objects $B_{\gl}G$ for $G \in \Ff$, finishing the proof.
	\end{proof}
	
	\section{Cohesion}\label{subsec:cohesion}
	As a consequence of the above results, we will now deduce that the $\infty$-topos $\Shv(\SepStk)$ is \textit{cohesive}, in the sense of \cite[Definition~4.1.8]{schreiber2013differential}. We further show that the subtopos $\Shv(\Orbfld)$ of sheaves on the site of orbifolds is even \textit{relatively cohesive} over the $\infty$-topos of $\Fin$-global spaces.
	
	\subsection{Cohesion for separated stacks}
	Fix a collection $\Ff$ of compact Lie groups. We start by showing that the homotopy invariant sheaves on $\SepStk_{\Ff}$ are closed under colimits:
	
	\begin{proposition}
		\label[proposition]{cor:Homotopy_Invariant_Sheaves_Closed_Under_Colimits}
		The inclusion $\Shv^{\htp}(\SepStk_{\Ff}) \hookrightarrow \Shv(\SepStk_{\Ff})$ preserves colimits.
	\end{proposition}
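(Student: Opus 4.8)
The strategy is to transport the statement to the site $\EuclStk_{\Ff}$ of Euclidean stacks, where homotopy invariance is an objectwise condition and, crucially, every homotopy invariant presheaf is already a sheaf. First I would observe that the equivalences $\Shv(\SepStk)\iso\Shv(\EuclStk)$ of \Cref{cor:Generated_By_Euclidean_Stacks} and $\Shv^{\htp}(\SepStk)\iso\Shv^{\htp}(\EuclStk)$ of \Cref{rmk:Test_Homotopy_Invariance_On_EuclStk} restrict --- exactly as in the proof of \Cref{cor:Main_Result_For_Families} --- to equivalences $\Shv(\SepStk_{\Ff})\iso\Shv(\EuclStk_{\Ff})$ and $\Shv^{\htp}(\SepStk_{\Ff})\iso\Shv^{\htp}(\EuclStk_{\Ff})$, and that under these identifications the inclusion $\Shv^{\htp}(\SepStk_{\Ff})\hookrightarrow\Shv(\SepStk_{\Ff})$ corresponds to $\Shv^{\htp}(\EuclStk_{\Ff})\hookrightarrow\Shv(\EuclStk_{\Ff})$. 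It thus suffices to show that this inclusion preserves colimits.

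The argument then rests on two observations about $\EuclStk_{\Ff}$. First, the full subcategory $\PSh^{\htp}(\EuclStk_{\Ff})\subseteq\PSh(\EuclStk_{\Ff})$ of homotopy invariant presheaves is closed under all small colimits: colimits of presheaves are computed objectwise, for $\Xx\in\EuclStk_{\Ff}$ one also has $\Xx\times\R\in\EuclStk_{\Ff}$ (adjoining a trivial summand to a representation does not enlarge its isotropy groups), and so for any diagram $q\colon K\to\PSh^{\htp}(\EuclStk_{\Ff})$ the map $(\colim_K q)(\Xx)\to(\colim_K q)(\Xx\times\R)$ is a colimit of the equivalences $q(k)(\Xx)\iso q(k)(\Xx\times\R)$, hence again an equivalence. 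Second, the proof of \Cref{thm:Homotopy_Invariant_Presheaf_On_EuclStk_Is_Sheaf} applies verbatim with $\EuclStk$ replaced by $\EuclStk_{\Ff}$ --- the evaluation functors $\ev_{\bbB G}$ for $G\in\Ff$ remain jointly conservative and colimit-preserving on $\PSh^{\htp}(\EuclStk_{\Ff})$, and $\bbB G\in\EuclStk_{\Ff}$ for $G\in\Ff$ --- so that $\PSh^{\htp}(\EuclStk_{\Ff})=\Shv^{\htp}(\EuclStk_{\Ff})$ as full subcategories of $\PSh(\EuclStk_{\Ff})$.

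Combining these, for a diagram $p\colon K\to\Shv^{\htp}(\EuclStk_{\Ff})$ I would form the objectwise colimit $Q\coloneqq\colim_K^{\PSh(\EuclStk_{\Ff})}p$. By the first observation $Q$ is homotopy invariant, and by the second it is a sheaf; consequently $Q$, together with its colimit cocone, simultaneously represents the colimit of $p$ computed in $\PSh(\EuclStk_{\Ff})$, in $\Shv(\EuclStk_{\Ff})$ (because $L_{\open}Q\simeq Q$), and in $\Shv^{\htp}(\EuclStk_{\Ff})$ (because $\PSh^{\htp}(\EuclStk_{\Ff})$ is colimit-closed in $\PSh(\EuclStk_{\Ff})$). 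Hence the canonical comparison map $\colim_K^{\Shv(\EuclStk_{\Ff})}p\to\colim_K^{\Shv^{\htp}(\EuclStk_{\Ff})}p$ is an equivalence, which is what we want.

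I expect the only step requiring genuine care to be the reduction of the first paragraph: one must check that the various Euclidean-stack equivalences are compatible with the homotopy-localization inclusions and restrict correctly once the isotropy is constrained to $\Ff$. This is a routine repetition of the bookkeeping already present in the proof of \Cref{cor:Main_Result_For_Families}, and everything afterwards is formal.
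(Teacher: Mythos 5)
Your proposal is correct and follows essentially the same route as the paper: reduce to $\EuclStk_{\Ff}$ via \Cref{rmk:Test_Homotopy_Invariance_On_EuclStk}, use that homotopy invariant presheaves are closed under pointwise colimits, and invoke (the $\Ff$-restricted version of) \Cref{thm:Homotopy_Invariant_Presheaf_On_EuclStk_Is_Sheaf} to conclude that the resulting presheaf colimit is already a sheaf. The paper phrases this as a factorization of the inclusion through $\PSh^{\htp}(\EuclStk_{\Ff})$ and $L_{\open}$, but the content is identical; your explicit remarks that $\Xx\times\R$ stays in $\EuclStk_{\Ff}$ and that the presheaf-versus-sheaf argument carries over to the family setting are, if anything, slightly more careful than the published proof.
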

	\begin{proof}
		We may equivalently show that the inclusion $\Shv^{\htp}(\EuclStk_{\Ff}) \hookrightarrow \Shv(\EuclStk_{\Ff})$ preserves colimits, see \Cref{rmk:Test_Homotopy_Invariance_On_EuclStk}. We may factor this inclusion as follows:
		\[
		\Shv^{\htp}(\EuclStk_{\Ff}) \hookrightarrow \PSh^{\htp}(\EuclStk_{\Ff}) \hookrightarrow \PSh(\EuclStk_{\Ff}) \xrightarrow{L_{\open}} \Shv(\EuclStk_{\Ff}).
		\]
		The first functor is an equivalence by \Cref{prop:Homotopy_Invariant_Presheaf_On_EuclStk_Is_Sheaf}, hence preserves colimits. For the second functor, observe that the homotopy invariant presheaves are closed under colimits as these are computed pointwise. Finally, $L_{\open}$ preserves colimits as it is a left adjoint.
	\end{proof}
	
	\begin{corollary}
		\label[corollary]{cor:Adjunctions_Homotopy_Invariant_Sheaves}
		There exists a pair of adjunctions as follows:
		\[
		\begin{tikzcd}[column sep = large]
			\Shv(\SepStk_{\Ff}) 
			\ar[shift left=8, "L_{\htp}"{description}]{rrr} 
			\ar[rrr,phantom,shift left=4,"{\scalebox{.6}{$\bot$}}"{description}] 
			\ar[hookleftarrow]{rrr}
			\ar[rrr,phantom,shift right=4,"{\scalebox{.6}{$\bot$}}"{description}] 
			\ar[shift right=8]{rrr} &&& \Shv^{\htp}(\SepStk_{\Ff}).
		\end{tikzcd}
		\]
	\end{corollary}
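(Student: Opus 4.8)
The plan is to obtain both adjunctions formally, via the adjoint functor theorem applied to the inclusion $\iota\colon \Shv^{\htp}(\SepStk_{\Ff}) \hookrightarrow \Shv(\SepStk_{\Ff})$. The left-hand adjunction requires nothing new: by construction the homotopy localization $L_{\htp}$ is left adjoint to $\iota$, exhibiting $\Shv^{\htp}(\SepStk_{\Ff})$ as a reflective subcategory of $\Shv(\SepStk_{\Ff})$. So the only content is to produce a further right adjoint $\iota^{!}$ of $\iota$, for which I would invoke \cite[Corollary~5.5.2.9]{lurie2009HTT}: a functor between presentable $\infty$-categories admits a right adjoint if and only if it preserves small colimits.

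Two things then need to be checked. That $\iota$ preserves small colimits is precisely \Cref{cor:Homotopy_Invariant_Sheaves_Closed_Under_Colimits}. That both $\infty$-categories involved are presentable is also immediate: $\Shv(\SepStk_{\Ff})$ is an $\infty$-topos, hence presentable; and $\Shv^{\htp}(\SepStk_{\Ff})$ is presentable because it is an accessible localization of $\Shv(\SepStk_{\Ff})$ --- namely the localization at the small set of morphisms $\{\Xx \times \R \to \Xx : \Xx \in \EuclStk_{\Ff}\}$, which suffices by \Cref{rmk:Test_Homotopy_Invariance_On_EuclStk} --- or, alternatively, because \Cref{cor:Main_Result_For_Families} already identifies it with the presheaf $\infty$-category $\PSh(\Glo_{\Ff})$.

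I do not expect any real obstacle here: the statement is essentially bookkeeping on top of the structural results of the previous section, and the only step warranting a word of justification is presentability, handled as above. To match the picture one may also observe that the right adjoint $\iota^{!}$ produced this way is not required to be fully faithful (and is not, since $\iota$ is not essentially surjective), so that the diagram is exactly the asserted pair of adjunctions in which only the middle arrow $\iota$ is fully faithful.
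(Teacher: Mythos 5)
Your proposal is correct and takes essentially the same route as the paper: the left adjoint $L_{\htp}$ exists formally as the reflection onto a localization, and the right adjoint is produced by the adjoint functor theorem from the fact that the inclusion preserves colimits, which is exactly \Cref{cor:Homotopy_Invariant_Sheaves_Closed_Under_Colimits}. The extra details you supply (presentability of both sides, and the observation that $\iota^{!}$ is not fully faithful) are accurate but left implicit in the paper's proof.
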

	\begin{proof}
		The fact that the inclusion $\Shv^{\htp}(\SepStk_{\Ff}) \hookrightarrow \Shv(\SepStk_{\Ff})$ admits a left adjoint $L_{\htp}$ is formal and was discussed before. The fact that it admits a right adjoint follows directly from the adjoint functor theorem, since the inclusion preserves colimits by \Cref{cor:Homotopy_Invariant_Sheaves_Closed_Under_Colimits}.
	\end{proof}
	
	\begin{corollary}
		The functor $w^*\colon \Shv(\Mfld) \hookrightarrow \Shv(\SepStk_{\Ff})$ preserves homotopy invariant sheaves.
	\end{corollary}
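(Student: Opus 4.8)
The plan is to combine the description of homotopy invariant sheaves on $\Mfld$ as constant sheaves with the fact, recorded in \Cref{cor:Homotopy_Invariant_Sheaves_Closed_Under_Colimits}, that homotopy invariant sheaves on $\SepStk_{\Ff}$ are closed under colimits. Concretely, I would first recall that by \cite[Proposition~4.3.1]{ADH2021differential} the global sections functor $\Gamma_*\colon \Shv^{\htp}(\Mfld) \to \Grpdinfty$ is an equivalence, with inverse given by the restriction of the constant sheaf functor $\Gamma^*\colon \Grpdinfty \to \Shv(\Mfld)$. Hence any homotopy invariant sheaf $\Gg \in \Shv(\Mfld)$ is of the form $\Gamma^*(K)$ for $K \coloneqq \Gamma_*\Gg \in \Grpdinfty$. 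Writing $K$ as the colimit of the constant $K$-indexed diagram on the point and using that $\Gamma^*$ preserves colimits, we conclude that $\Gg$ is a $K$-indexed colimit of copies of the terminal sheaf $\unit_{\Shv(\Mfld)}$.

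Next I would observe that the inclusion $w\colon \Mfld \hookrightarrow \SepStk_{\Ff}$ is a continuous and cocontinuous morphism of sites, exactly as in \Cref{prop:Adjunctions_SepStk_and_Mfld}, so that $w^*$ is left exact and, being a left adjoint, preserves colimits. In particular $w^*$ preserves the terminal object, whence $w^*\Gg$ is a $K$-indexed colimit of copies of the terminal sheaf $\unit_{\Shv(\SepStk_{\Ff})}$. (Equivalently, $w^* \circ \Gamma^*_{\Mfld} \simeq \Gamma^*_{\SepStk_{\Ff}}$, both sides being the inverse image of the essentially unique geometric morphism to $\Grpdinfty$, so that $w^*\Gg \simeq \Gamma^*_{\SepStk_{\Ff}}(K)$.)

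Finally, the terminal sheaf on $\SepStk_{\Ff}$ is homotopy invariant, since $\unit(\Xx) \simeq \ast \simeq \unit(\Xx \times \R)$ for every separated stack $\Xx$, and by \Cref{cor:Homotopy_Invariant_Sheaves_Closed_Under_Colimits} the homotopy invariant sheaves on $\SepStk_{\Ff}$ are closed under colimits inside $\Shv(\SepStk_{\Ff})$. Therefore $w^*\Gg$ is homotopy invariant, as desired. I do not expect a genuine obstacle here: the only substantive inputs are \cite[Proposition~4.3.1]{ADH2021differential}, which pins down the homotopy invariant sheaves on $\Mfld$, and \Cref{cor:Homotopy_Invariant_Sheaves_Closed_Under_Colimits}, while the remaining verifications — left-exactness of $w^*$ and homotopy invariance of the terminal sheaf — are routine.
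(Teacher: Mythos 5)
Your proposal is correct and is essentially the paper's own argument: both reduce to the fact that homotopy invariant sheaves on $\Mfld$ are exactly the constant sheaves, then use that $w^*$ preserves colimits and the terminal object together with \Cref{cor:Homotopy_Invariant_Sheaves_Closed_Under_Colimits}. The paper packages the final step as the universal property of $\Grpdinfty$ as the free cocomplete $\infty$-category on a point, which is precisely your explicit decomposition of $\Gamma^*(K)$ as a $K$-indexed colimit of terminal objects.
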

	\begin{proof}
		Recall that a sheaf on $\Mfld$ is homotopy invariant if and only if it is in the essential image of the constant sheaf functor $\Gamma^*\colon \Spc \hookrightarrow \Shv(\Mfld)$; see for example \cite[Section~3.1]{ADH2021differential} for an exposition. We must therefore show that the composite $w^*\Gamma^*\colon \Spc \hookrightarrow \Shv(\SepStk_{\Ff})$ lands in the subcategory of homotopy invariant sheaves, or equivalently, that the following diagram commutes:
		\[
		\begin{tikzcd}
			\Shv(\SepStk_{\Ff}) \rar[hookleftarrow] & \Shv^{\htp}(\SepStk_{\Ff})\\
			\Shv(\Mfld) \uar[hookrightarrow]{w^*} \rar[hookleftarrow]{\Gamma^*} & \Spc, \uar[hookrightarrow]
		\end{tikzcd}
		\]
		where the right vertical map is the unique left exact left adjoint from $\Spc$ to the $\infty$-topos $\Shv^{\htp}(\SepStk_{\Ff})$. But this is clear from $\Spc$ being the free cocomplete $\infty$-category on a point, since both composites preserve colimits and preserve the terminal object.
	\end{proof}
	
	\begin{theorem}
		\label[theorem]{thm:Cohesion}
		The $\infty$-topos $\Shv(\SepStk_{\Ff})$ is \emph{cohesive}, in the sense that the following two conditions are satisfied:
		\begin{enumerate}[(1)]
			\item The constant sheaf functor $\Gamma^*\colon \Spc \to \Shv(\SepStk_{\Ff})$ is fully faithful and admits a finite-product-preserving left adjoint $\Piinfty = \Gamma_{\sharp}\colon \Shv(\SepStk_{\Ff}) \to \Spc$;
			\item The global sections functor $\Gamma_*\colon \Shv(\SepStk_{\Ff}) \to \Spc$ admits a fully faithful right adjoint $\Gamma^! \colon \Spc \hookrightarrow \Shv(\SepStk_{\Ff})$.
		\end{enumerate}
	\end{theorem}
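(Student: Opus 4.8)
The plan is to check the two conditions separately, exploiting two different factorisations of the terminal geometric morphism $\Gamma_*\colon\Shv(\SepStk_{\Ff})\to\Grpdinfty$: one through $\Shv(\Mfld)$ and one through $\Glo\Spc_{\Ff}\simeq\Shv^{\htp}(\SepStk_{\Ff})$, the latter identification being \Cref{cor:Main_Result_For_Families}. Throughout I assume $\Ff$ contains the trivial group, so that the functor $w\colon\Mfld\hookrightarrow\SepStk_{\Ff}$ of \Cref{prop:Adjunctions_SepStk_and_Mfld} is available. Since $\Grpdinfty$ is the terminal $\infty$-topos, the terminal geometric morphism of $\Shv(\SepStk_{\Ff})$ factors uniquely through any intermediate $\infty$-topos; composing with the geometric morphism determined by $w$ yields $\Gamma_*\simeq\Gamma^{\Mfld}_*\circ w_*$ and $\Gamma^*\simeq w^*\circ\Gamma^*_{\Mfld}$, and composing with the essential geometric morphism whose inverse image is the inclusion $\iota\colon\Shv^{\htp}(\SepStk_{\Ff})\hookrightarrow\Shv(\SepStk_{\Ff})$ (left adjoint $L_{\htp}$, by \Cref{cor:Adjunctions_Homotopy_Invariant_Sheaves}) yields $\Gamma^*\simeq\iota\circ p^*$, where $p^*\colon\Grpdinfty\to\PSh(\Glo_{\Ff})$ is the constant-presheaf functor. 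The second identification requires that the constant sheaf $\Gamma^*(X)$ be homotopy invariant, which is exactly the corollary stated immediately before the theorem, applied to the constant sheaf $\Gamma^*_{\Mfld}(X)$ on $\Mfld$.

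Given these factorisations, condition~(2) and the full faithfulness part of condition~(1) are soft. For (2): $\Gamma_*\simeq\Gamma^{\Mfld}_*\circ w_*$ has the right adjoint $\Gamma^!:=w^!\circ(\Gamma^{\Mfld})^!$, where $w^!$ is the fully faithful right adjoint of $w_*$ from \Cref{prop:Adjunctions_SepStk_and_Mfld} and $(\Gamma^{\Mfld})^!$ is the fully faithful right adjoint of $\Gamma^{\Mfld}_*$ supplied by the cohesion of $\Shv(\Mfld)$ \cite{ADH2021differential}; a composite of fully faithful functors is fully faithful. For the first half of (1): $\Gamma^*\simeq w^*\circ\Gamma^*_{\Mfld}$ is a composite of the fully faithful functors $w^*$ (\Cref{prop:Adjunctions_SepStk_and_Mfld}) and $\Gamma^*_{\Mfld}$ (cohesion of $\Shv(\Mfld)$).

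What remains, and is the crux, is the finite-product-preserving left adjoint $\Gamma_{\sharp}$ of $\Gamma^*$. Existence I would read off from $\Gamma^*\simeq\iota\circ p^*$: the inclusion $\iota$ has left adjoint $L_{\htp}$, and the constant-presheaf functor $p^*$ always has the left adjoint $\colim_{\Glo_{\Ff}\catop}\colon\PSh(\Glo_{\Ff})\to\Grpdinfty$, so $\Gamma_{\sharp}:=\colim_{\Glo_{\Ff}\catop}\circ\,L_{\htp}$ (transported along $\Shv^{\htp}(\SepStk_{\Ff})\simeq\PSh(\Glo_{\Ff})$) is the desired left adjoint --- concretely the global-homotopy-type functor followed by the shape functor of the presheaf topos $\Glo\Spc_{\Ff}$. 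For finite-product-preservation I would treat the two factors in turn. That $L_{\htp}$ preserves finite products follows, after restricting to Euclidean stacks, from the Morel-Voevodsky formula $L_{\htp}(\Ff)(\Xx)\simeq\colim_{[n]\in\Delta\catop}\Ff(\Xx\times\Deltaalg{n})$ of \Cref{cor:Value_Of_Homotopy_Localization_At_Euclidean_Stack}, since limits of sheaves are computed pointwise and $\Delta\catop$ is sifted. That $\colim_{\Glo_{\Ff}\catop}$ preserves finite products follows once one knows $\Glo_{\Ff}$ has finite products --- with terminal object $B_{\gl}e$ and $B_{\gl}G\times B_{\gl}H\simeq B_{\gl}(G\times H)$, as one reads off from $\Hom_{\mathrm{LieGrp}}(K,G\times H)\simeq\Hom_{\mathrm{LieGrp}}(K,G)\times\Hom_{\mathrm{LieGrp}}(K,H)$ via the description of the mapping spaces of $\Glo$ --- for then $\Glo_{\Ff}\catop$ has finite coproducts, hence is sifted and weakly contractible, hence $\colim_{\Glo_{\Ff}\catop}$ commutes with finite products. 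Composing, $\Gamma_{\sharp}$ preserves finite products, and conditions~(1) and~(2) are verified. The hard part is this last chain of observations about $\Glo_{\Ff}$: it is the only place where the concrete combinatorics of the global indexing category (rather than abstract adjoint-functor bookkeeping) enters, and it tacitly requires $\Ff$ to be closed under finite products, which holds in the principal cases $\Ff=\Fin$ and $\Ff$ the collection of all compact Lie groups.
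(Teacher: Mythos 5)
Your proof is correct and takes essentially the same route as the paper's: both halves are handled by the same two factorizations, with (2) and the full faithfulness in (1) read off from \Cref{prop:Adjunctions_SepStk_and_Mfld} together with cohesion of $\Shv(\Mfld)$, and with $\Gamma_{\sharp}$ constructed as $L_{\htp}$ followed by the colimit functor on $\PSh(\Glo_{\Ff})$. The only divergence is in how finite-product preservation is verified --- the paper cites \cite[Proposition~3.4]{hoyois2017sixoperations} for $L_{\htp}$ where you argue directly from the Morel--Voevodsky formula and siftedness of $\Delta\catop$, and it treats the colimit functor as the left Kan extension of the product-preserving functor $\const_*$ where you make explicit the underlying fact that $\Glo_{\Ff}$ has finite products (hence $\Glo_{\Ff}\catop$ is sifted), a hypothesis on $\Ff$ that the paper leaves tacit.
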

	\begin{proof}
		For (1), note that $\Gamma^*$ is fully faithful as it is a composite of $\Spc \hookrightarrow \Shv(\Mfld)$ and $w^*\colon \Shv(\Mfld) \hookrightarrow \Shv(\SepStk)$. Since it alternatively factors as
		\[
		\Spc \xrightarrow{\mathrm{const}} \PSh(\Glo_{\Ff}) \simeq \Shv^{\htp}(\SepStk_{\Ff}) \hookrightarrow \Shv(\SepStk_{\Ff}),
		\]
		it admits a left adjoint given by the composite
		\[
		\Shv(\SepStk_{\Ff}) \xrightarrow{L_{\htp}} \Shv^{\htp}(\SepStk_{\Ff}) \simeq \PSh(\Glo_{\Ff}) \xrightarrow{L} \Spc,
		\]
		where the last functor $L$ is the left Kan extension of the constant functor $\const_*\colon \Glo^{\Stk}_{\Ff}  \to \Spc$. Since $\const_*$ preserves finite products, so does $L$. The functor $L_{\htp}$ also preserves finite products as a consequence of \cite[Proposition~3.4]{hoyois2017sixoperations}, see also \cite[Corollary~4.2.14]{Cnossen2023PhD}, and so the composite is finite-product-preserving.
		
		Part (2) is immediate from the adjunctions from \Cref{prop:Adjunctions_SepStk_and_Mfld} and the fact that $\Gamma_*\colon \Shv(\Mfld) \to \Spc$ admits a fully faithful right adjoint $\Gamma^!\colon \Spc \hookrightarrow \Shv(\Mfld)$.
	\end{proof}
	
	\subsection{Cohesion for orbifolds}	\label{subsec:Cohesion_for_orbifolds}
	The situation becomes even better if we restrict attention to orbifolds.
	
	\begin{definition}[{Orbifold, cf.\ \cite{moerdijk2002orbifolds}}]
		A separated stack $\Xx$ is called an \textit{orbifold} if all its isotropy groups are finite groups. We let $\Orbfld \subseteq \SepStk$ denote the full subcategory spanned by the orbifolds. More generally, given a collection $\Ff$ of finite groups, we will write $\Orbfld_{\Ff} := \SepStk_{\Ff} \subseteq \SepStk$ for the subcategory of separated stacks with isotropy in $\Ff$.
	\end{definition}
	
	The relevant fact about finite groups is the fact that the classifying sheaf $\bbB G \in \Shv(\Orbfld_{\Ff})$ is already homotopy invariant for every finite group $G$, see \Cref{cor:BG_Homotopy_Invariant}. In the remainder of this subsection we consider $\Glo_{\Ff}$ as a subcategory of $\Shv(\Orbfld_{\Ff})$ via the composite $\Glo_{\Ff}\simeq \Glo_{\Ff}^{\Stk} \subset \Shv(\Orbfld_{\Ff})$.
	
	\begin{lemma}
		If $\Ff$ consists of finite groups, the subcategory $\Glo_{\Ff} \subseteq \Shv(\Orbfld_{\Ff})$ agrees with the full subcategory of $\Orbfld_{\Ff} \subseteq \Shv(\Orbfld_{\Ff})$ spanned by the representable objects $\bbB G \in \Orbfld_{\Ff}$ for $G \in \Ff$.
	\end{lemma}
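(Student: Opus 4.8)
The plan is to deduce the statement directly from \Cref{cor:BG_Homotopy_Invariant}. First I would unwind the embedding $\Glo_{\Ff} \subseteq \Shv(\Orbfld_{\Ff})$. Tracing through the equivalences of \Cref{prop:Glo_Mfld_Vs_Glo_Top} and \Cref{prop:Glo_Mfld_Vs_Glo_Stk}, the object $B_{\gl}G \in \Glo_{\Ff}$ corresponds to $L_{\htp}(\bbB G) \in \Glo^{\Stk}_{\Ff} \subseteq \Shv^{\htp}(\Orbfld_{\Ff})$, and the composite $\Glo_{\Ff} \simeq \Glo^{\Stk}_{\Ff} \subseteq \Shv(\Orbfld_{\Ff})$ factors through the fully faithful inclusion $\Shv^{\htp}(\Orbfld_{\Ff}) \hookrightarrow \Shv(\Orbfld_{\Ff})$. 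Hence the essential image of $\Glo_{\Ff}$ in $\Shv(\Orbfld_{\Ff})$ is the full subcategory spanned by the objects $L_{\htp}(\bbB G)$ for $G \in \Ff$, and it suffices to show that for each finite group $G \in \Ff$ the image of $L_{\htp}(\bbB G)$ in $\Shv(\Orbfld_{\Ff})$ is equivalent to the representable sheaf $\bbB G$.

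This is exactly what \Cref{cor:BG_Homotopy_Invariant} delivers: for a finite group $G$, the representable sheaf $\bbB G$ is homotopy invariant. That corollary is phrased over $\Shv(\SepStk)$, but it transports to $\Shv(\Orbfld_{\Ff})$ along the restriction functor $\iota^*\colon \Shv(\SepStk) \to \Shv(\Orbfld_{\Ff})$ induced by the inclusion of sites $\iota\colon \Orbfld_{\Ff} \hookrightarrow \SepStk$. Indeed $\iota$ is a cocontinuous morphism of sites (covers of a stack in $\Orbfld_{\Ff}$ are by open substacks, which again lie in $\Orbfld_{\Ff}$), so $\iota^*$ preserves sheaves; it carries the representable sheaf $\bbB G$ to the representable sheaf $\bbB G$ by full faithfulness of $\iota$; and it preserves homotopy invariance since $\Yy \times \R$ has the same isotropy groups as $\Yy$. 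Because $\bbB G$ is already homotopy invariant, the unit map $\bbB G \to L_{\htp}(\bbB G)$ is an equivalence, which is precisely the assertion that the image of $L_{\htp}(\bbB G) \in \Shv^{\htp}(\Orbfld_{\Ff})$ under the fully faithful inclusion into $\Shv(\Orbfld_{\Ff})$ is the representable sheaf $\bbB G$.

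Putting the two steps together, and using once more that $\Shv^{\htp}(\Orbfld_{\Ff}) \hookrightarrow \Shv(\Orbfld_{\Ff})$ is fully faithful, the full subcategory of $\Shv(\Orbfld_{\Ff})$ spanned by the objects $L_{\htp}(\bbB G)$, which is the essential image of $\Glo_{\Ff}$, coincides with the full subcategory spanned by the representable objects $\bbB G$ for $G \in \Ff$. I do not expect a genuine obstacle here; the two points that require a little care are the bookkeeping identification $B_{\gl}G \simeq L_{\htp}(\bbB G)$ under the chain of equivalences of \Cref{prop:Glo_Mfld_Vs_Glo_Top} and \Cref{prop:Glo_Mfld_Vs_Glo_Stk}, and the transport of \Cref{cor:BG_Homotopy_Invariant} from $\SepStk$ to the subsite $\Orbfld_{\Ff}$.
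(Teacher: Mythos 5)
Your proposal is correct and follows essentially the same route as the paper: identify the image of $\Glo_{\Ff}$ with the objects $L_{\htp}(\bbB G)$ and then invoke \cref{cor:BG_Homotopy_Invariant} to conclude $L_{\htp}(\bbB G) \simeq \bbB G$. The only difference is that you spell out the transport of that corollary from $\Shv(\SepStk)$ to $\Shv(\Orbfld_{\Ff})$, a point the paper passes over silently; your justification of it is sound.
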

	\begin{proof}
		By definition, the image of $\Glo_{\Ff}$ in $\Shv(\Orbfld_{\Ff})$ is spanned by the sheaves $L_{\htp}(\bbB G)$ for $G \in \Ff$, but this agrees with $\bbB G$ since it is already a homotopy invariant sheaf by \Cref{cor:BG_Homotopy_Invariant}.
	\end{proof}
	
	\begin{lemma}
		The inclusion functor $i\colon \Glo_{\Ff} \hookrightarrow \Orbfld_{\Ff}$ is both a continuous and cocontinuous morphism of sites, where we equip $\Glo_{\Ff}$ with the trivial Grothendieck topology. In particular, we obtain adjunctions
		\begin{equation*}
			\begin{tikzcd}[column sep = large]
				\Shv(\Orbfld_{\Ff})
				\ar[rr,"{ i_* }"{description}]
				&&
				\Shv(\Glo_{\Ff}) = \PSh(\Glo_{\Ff}),
				\ar[ll,hook',shift right=18pt,"{ i^* }"{description}]
				\ar[ll,hook',shift right=-18pt,"{ i^! }"{description}]
				\ar[ll,phantom,shift right=9pt,"{\scalebox{.6}{$\bot$}}"]
				\ar[ll,phantom,shift right=-9pt,"{\scalebox{.6}{$\bot$}}"]
			\end{tikzcd}
		\end{equation*}
	\end{lemma}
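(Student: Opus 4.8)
The plan is to verify directly that $i$ is a continuous and cocontinuous morphism of sites; exactly as in \Cref{prop:Adjunctions_SepStk_and_Mfld}, this yields the displayed triple adjunction $i^* \dashv i_* \dashv i^!$ with $i_*$ the restriction-of-(pre)sheaves functor, after which the only remaining work is to check that $i^*$ and $i^!$ are fully faithful. Two facts are used throughout: that $\Glo_{\Ff}$ carries the \emph{trivial} topology, so that the unique covering sieve on an object of $\Glo_{\Ff}$ is the maximal one and every presheaf on $\Glo_{\Ff}$ is a sheaf; and the preceding Lemma, which lets me regard $i$ as the inclusion of the full subcategory of $\Orbfld_{\Ff}$ on the stacks $\bbB G$ for $G \in \Ff$.

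Continuity is essentially vacuous: there are no nontrivial covers in the source to preserve, and restriction along $i$ automatically sends sheaves to (pre)sheaves on $\Glo_{\Ff}$. The one point with genuine content is cocontinuity. Here I would fix $\bbB G \in \Glo_{\Ff}$ and an open covering $\{\Uu_j \hookrightarrow \bbB G\}_j$ in $\Orbfld_{\Ff}$ and show that the sieve on $\bbB G$ in $\Glo_{\Ff}$ consisting of those morphisms $f$ for which $i(f)$ factors through some $\Uu_j$ is a covering sieve, i.e.\ is the maximal one, i.e.\ contains $\id_{\bbB G}$. This reduces to a single geometric observation: the underlying topological space of the stack $\bbB G$ is a point, so the only open substacks of $\bbB G$ are $\emptyset$ and $\bbB G$ itself; since $\bigsqcup_j \Uu_j \twoheadrightarrow \bbB G$ is in particular surjective, at least one $\Uu_j$ equals $\bbB G$, and $\id_{\bbB G}$ factors through that one. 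Hence $i$ is cocontinuous.

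Granting this, restriction along $i$ defines $i_*\colon \Shv(\Orbfld_{\Ff}) \to \Shv(\Glo_{\Ff}) = \PSh(\Glo_{\Ff})$, $\Gg \mapsto \Gg \circ i$, which preserves limits pointwise and, by cocontinuity, also colimits, and therefore admits both a left adjoint $i^*$ and a right adjoint $i^!$; these are computed as the sheafifications of the left and the right Kan extension of presheaves along $i$ respectively. To finish I would show the units $\id \to i_* i^*$ and $\id \to i_* i^!$ are equivalences, which can be tested by evaluating at each $\bbB G$. Since $i$ is fully faithful, the presheaf-level left and right Kan extensions along $i$ are fully faithful and take the value $\Gg(\bbB G)$ at $\bbB G$, the relevant comma category having $\id_{\bbB G}$ as a terminal, resp.\ initial, object; and since $\bbB G$ carries no nontrivial covering sieve, sheafification leaves the value of a presheaf at $\bbB G$ unchanged, each stage of the iterated plus-construction there being a limit indexed by the maximal sieve, which has a terminal object. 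Combining these, $(i_* i^* \Gg)(\bbB G) \simeq \Gg(\bbB G) \simeq (i_* i^! \Gg)(\bbB G)$ naturally in $\Gg$, as required. I anticipate no serious obstacle: the sole non-formal ingredient is the triviality of the lattice of open substacks of $\bbB G$, which is precisely what powers both the cocontinuity check and the invariance of sheafification at $\bbB G$; the rest is the routine calculus of (co)continuous morphisms of sites.
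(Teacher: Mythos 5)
Your proof is correct and follows the same route as the paper: continuity is automatic from the trivial topology on $\Glo_{\Ff}$, and cocontinuity rests on the same key observation that $\bbB G$ admits only the trivial open cover because its coarse (underlying) space is a point. The extra verification you give that $i^*$ and $i^!$ are fully faithful (via Kan extensions along the fully faithful $i$ and the fact that sheafification does not change values at objects with no nontrivial covering sieves) is sound; the paper simply leaves this formal consequence implicit.
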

	\begin{proof}
		As the Grothendieck topology on $\Glo_{\Ff}$ is trivial, the inclusion is automatically continuous. For cocontinuity, it suffices to observe that the only open cover of the orbifold $\bbB G$ is the trivial open cover: open covers of a stack are in one-to-one correspondence with open covers of the coarse moduli space of the stack, which in case of $\bbB G$ is the one-point space.
	\end{proof}
	
	\begin{lemma}
		The inclusion $i^*\colon \PSh(\Glo_{\Ff}) \hookrightarrow \Shv(\Orbfld_{\Ff})$ agrees with the composite
		\[
		\PSh(\Glo_{\Ff}) \iso \Shv^{\htp}(\Orbfld_{\Ff}) \hookrightarrow \Shv(\Orbfld_{\Ff}),
		\]
		where the first functor is the (inverse of the) equivalence from \Cref{cor:Main_Result_For_Families} and the second functor is the inclusion. In particular, this functor admits a further left adjoint $i_!$ which preserves finite products.
	\end{lemma}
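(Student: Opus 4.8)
The plan is to use the universal property of the presheaf $\infty$-category $\PSh(\Glo_{\Ff})$: a colimit-preserving functor out of it is the left Kan extension of its restriction along the Yoneda embedding $y\colon\Glo_{\Ff}\to\PSh(\Glo_{\Ff})$. So I would first check that both functors appearing in the statement --- the functor $i^*$, and the composite $\PSh(\Glo_{\Ff})\iso\Shv^{\htp}(\Orbfld_{\Ff})\hookrightarrow\Shv(\Orbfld_{\Ff})$ --- preserve colimits, then compute their restrictions along $y$ and check that these agree, and finally read off the remaining assertion about $i_!$.

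Both functors are left adjoints, hence colimit-preserving: $i^*$ is left adjoint to $i_*$ by the preceding lemma, while the displayed composite factors as an equivalence followed by the inclusion $\Shv^{\htp}(\Orbfld_{\Ff})\hookrightarrow\Shv(\Orbfld_{\Ff})$, which preserves colimits by \Cref{cor:Homotopy_Invariant_Sheaves_Closed_Under_Colimits} (note that $\Orbfld_{\Ff}=\SepStk_{\Ff}$ since $\Ff$ consists of finite groups). For the restrictions along $y$: recall that $i^*$, being the left adjoint of the restriction functor $i_*$, is the sheafification of the left Kan extension along $i\colon\Glo_{\Ff}\hookrightarrow\Orbfld_{\Ff}$; since left Kan extension along $i$ sends the Yoneda embedding of $\Glo_{\Ff}$ to the Yoneda embedding of $\Orbfld_{\Ff}$ restricted along $i$, the composite $i^*\circ y$ is the functor $\Glo_{\Ff}\to\Shv(\Orbfld_{\Ff})$ carrying $B_{\gl}G$ to the representable sheaf $\bbB G$, i.e.\ the subcategory embedding $\Glo_{\Ff}\simeq\Glo^{\Stk}_{\Ff}\subset\Shv(\Orbfld_{\Ff})$ fixed at the start of the subsection (here using the first lemma above, which identifies $\Glo_{\Ff}$ with the representable orbifolds $\bbB G$, $G\in\Ff$). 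On the other hand, the equivalence of \Cref{cor:Main_Result_For_Families} is the restriction of the generalized Elmendorf equivalence of \Cref{thm:Homotopy_Invariant_Sheaves_Form_Presheaf_Category}, which is a restricted-Yoneda equivalence along $\Glo^{\Stk}_{\Ff}\hookrightarrow\Shv^{\htp}(\Orbfld_{\Ff})$; hence it sends $y(B_{\gl}G)$ to $L_{\htp}(\bbB G)$, and this equals $\bbB G$ because $\bbB G$ is already a homotopy invariant sheaf by \Cref{cor:BG_Homotopy_Invariant}. Post-composing with the inclusion into $\Shv(\Orbfld_{\Ff})$ therefore yields once more the embedding $\Glo_{\Ff}\simeq\Glo^{\Stk}_{\Ff}\subset\Shv(\Orbfld_{\Ff})$. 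Since both functors are colimit-preserving and have the same restriction along $y$, the universal property $\FunL(\PSh(\Glo_{\Ff}),\Shv(\Orbfld_{\Ff}))\simeq\Fun(\Glo_{\Ff},\Shv(\Orbfld_{\Ff}))$ forces them to agree, which is the first claim.

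For the last assertion, the identification just obtained presents $i^*$ as the composite of the equivalence $\PSh(\Glo_{\Ff})\simeq\Shv^{\htp}(\Orbfld_{\Ff})$ with the inclusion $\Shv^{\htp}(\Orbfld_{\Ff})\hookrightarrow\Shv(\Orbfld_{\Ff})$. The inclusion has left adjoint $L_{\htp}$, so composing with the inverse of the equivalence produces a left adjoint $i_!\colon\Shv(\Orbfld_{\Ff})\xrightarrow{L_{\htp}}\Shv^{\htp}(\Orbfld_{\Ff})\simeq\PSh(\Glo_{\Ff})$ of $i^*$; and $i_!$ preserves finite products because $L_{\htp}$ does --- by the same argument as in the proof of \Cref{thm:Cohesion}, i.e.\ via \cite[Proposition~3.4]{hoyois2017sixoperations} --- and equivalences preserve finite products. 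I expect the only genuinely delicate point to be the bookkeeping in the middle step: pinning down that the equivalence of \Cref{cor:Main_Result_For_Families} is a restricted-Yoneda equivalence, so that it carries $y(B_{\gl}G)$ to $L_{\htp}(\bbB G)$, and checking that both $i^*\circ y$ and the Elmendorf inverse followed by the inclusion literally reproduce the subcategory embedding $\Glo_{\Ff}\simeq\Glo^{\Stk}_{\Ff}\subset\Shv(\Orbfld_{\Ff})$ fixed earlier; the rest is formal.
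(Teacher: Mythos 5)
Your proposal is correct and follows essentially the same route as the paper's proof: both functors preserve colimits and agree on representables (where $L_{\htp}(\bbB G)\simeq\bbB G$ by \cref{cor:BG_Homotopy_Invariant}), hence agree, and $i_!$ is $L_{\htp}$ transported through the equivalence, preserving finite products. You have merely spelled out in detail the steps the paper states tersely.
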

	\begin{proof}
		Both of these functors preserve colimits and they agree on the representable objects, hence agree on all of $\PSh(\Glo_{\Ff})$. The last statement is an immediate consequence of the fact that the inclusion of homotopy invariant sheaves into all sheaves admits a left adjoint $L_{\htp}$ which preserves finite products.
	\end{proof}
	
	\begin{corollary}
		The functor $i_*\colon \Shv(\Orbfld_{\Ff}) \to \PSh(\Glo_{\Ff})$ exhibits its source as a cohesive $\infty$-topos over its target. \qed
	\end{corollary}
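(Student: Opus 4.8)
The plan is to read the statement off from the three lemmas just proved, which together furnish a quadruple of adjoint functors
\[
i_! \dashv i^* \dashv i_* \dashv i^!
\]
between $\Shv(\Orbfld_{\Ff})$ and $\PSh(\Glo_{\Ff})$, and then to check that this quadruple verifies the relative version of the axioms of \cite[Definition~4.1.8]{schreiber2013differential}. Concretely, the pair $(i^*, i_*)$ is a geometric morphism $\Shv(\Orbfld_{\Ff}) \to \PSh(\Glo_{\Ff})$ — it is left exact because, by the preceding lemma, $i^*$ identifies with the inclusion $\Shv^{\htp}(\Orbfld_{\Ff}) \hookrightarrow \Shv(\Orbfld_{\Ff})$ transported along the equivalence of \Cref{cor:Main_Result_For_Families}, and this inclusion is a right adjoint to $L_{\htp}$ and hence preserves all limits. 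The data witnessing cohesion over the base $\PSh(\Glo_{\Ff})$ is then exactly: (1) that $i^*$ is fully faithful and admits a finite-product-preserving left adjoint $i_!$; and (2) that $i_*$ admits a fully faithful right adjoint $i^!$.

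For (1) I would use the same identification of $i^*$ with the reflective inclusion $\Shv^{\htp}(\Orbfld_{\Ff}) \hookrightarrow \Shv(\Orbfld_{\Ff})$: reflective inclusions are fully faithful, and the left adjoint $i_!$ is $L_{\htp}$ transported along $\PSh(\Glo_{\Ff}) \simeq \Shv^{\htp}(\Orbfld_{\Ff})$. Finite-product preservation of $i_!$ then reduces to finite-product preservation of $L_{\htp}$, which is the one genuinely non-formal input and is the very same fact (\cite[Proposition~3.4]{hoyois2017sixoperations}, cf.\ \cite[Corollary~4.2.14]{Cnossen2023PhD}) already invoked in the proof of \Cref{thm:Cohesion}. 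For (2) I would invoke the lemma that $i \colon \Glo_{\Ff} \hookrightarrow \Orbfld_{\Ff}$ is a continuous and cocontinuous morphism of sites: the resulting adjunction $i_* \dashv i^!$ has $i^!$ fully faithful precisely because $i$ is a fully faithful inclusion of sites, so that the right Kan extension along it, suitably compatible with sheafification, remains fully faithful.

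I do not expect a substantive obstacle here: all the real work lives in the three preceding lemmas, and what remains is bookkeeping — matching each clause of the (relative) cohesion axioms to the appropriate lemma, and recording that $i^*$ is automatically left exact since it is a right adjoint. The closest thing to a \emph{hard part} is ensuring $i_!$ preserves finite products, but this is inherited verbatim from $L_{\htp}$ as in \Cref{thm:Cohesion}, so no new argument is required; the corollary is essentially a summary of the adjoint string assembled above.
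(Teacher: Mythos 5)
Your proposal is correct and follows exactly the route the paper intends: the corollary carries no written proof (it is marked \qed) precisely because it is just the assembly of the adjoint quadruple $i_! \dashv i^* \dashv i_* \dashv i^!$ from the three preceding lemmas, with full faithfulness of $i^*$ and $i^!$ and finite-product preservation of $i_!$ (inherited from $L_{\htp}$) supplying the relative cohesion axioms. Your bookkeeping matches the paper's implicit argument point for point.
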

	
	All in all, we have obtained the following adjunctions:
	\[
	\begin{tikzcd}[column sep = large]
		\Shv(\Mfld) \,\,
		\ar[hookrightarrow, shift left=8, "w^*"{description}]{rrr} 
		\ar[rrr,phantom,shift left=4,"{\scalebox{.6}{$\bot$}}"{description}] 
		\ar["w_*"{description}, leftarrow]{rrr}
		\ar[rrr,phantom,shift right=4,"{\scalebox{.6}{$\bot$}}"{description}] 
		\ar[hookrightarrow,shift right=8, "w^!"{description}]{rrr}
		&&& 
		\,\,\Shv(\Orbfld)\,\,
		\ar[rrr,shift right=9pt,"{ i_{*} }"{description}]
		\ar[rrr,shift left=27pt,"{ i_{!} }"{description}]
		&&& \,\,\Glo\Spc.
		\ar[lll,hook',shift right=9pt,"{ i^* }"{description}]
		\ar[lll,hook',shift right=-27pt,"{ i^! }"{description}]
		\ar[lll,phantom,shift right=0pt,"{\scalebox{.6}{$\bot$}}"]
		\ar[lll,phantom,shift right=-18pt,"{\scalebox{.6}{$\bot$}}"]
		\ar[lll,phantom,shift right=18pt,"{\scalebox{.6}{$\bot$}}"]
	\end{tikzcd}
	\]
	
	To end the section, we will give a comparison between our cohesive $\infty$-topos $\Shv(\Orbfld)$ and the ``singular-cohesive $\infty$-topos'' $\bH$ defined by Sati and Schreiber \cite[Definition~3.48]{satiSchreiber2020proper}.
	
	\begin{definition}
		The \textit{singular-cohesive $\infty$-topos} $\bH$ is defined as
		\[
		\bH := \Fun((\Glo_{\Fin})\catop, \Shv(\Eucl)).
		\]
		Observe that $\bH$ is equivalent to the sheaf $\infty$-topos $\Shv(\Glo_{\Fin} \times \Eucl)$, where $\Glo_{\Fin} \times \Eucl$ denotes the product of sites and where $\Glo_{\Fin}$ is equipped with the trivial Grothendieck topology as before.
	\end{definition}
	
	\begin{lemma}
		\label[lemma]{lem:Sati_Schreiber_Fully_Faithful}
		The functor $\Glo_{\Fin} \times \Eucl \to \Orbfld$ given by $(\bbB G, \R^n) \mapsto \bbB G \times \R^n$ is fully faithful.
	\end{lemma}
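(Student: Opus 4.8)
The plan is to reduce the claim to two elementary computations, exploiting that both factors of the source sit inside $\Orbfld$ as full subcategories. First I would invoke the preceding lemma to identify $\Glo_{\Fin}$ with the full subcategory of $\Orbfld$ spanned by the classifying stacks $\bbB G$ with $G$ finite; combined with the evident fully faithful inclusions $\Eucl \hookrightarrow \Mfld \hookrightarrow \Orbfld$, this lets me regard a morphism $(\psi, f)\colon (\bbB G, \R^n) \to (\bbB H, \R^m)$ in $\Glo_{\Fin} \times \Eucl$ as an honest pair: a morphism of stacks $\psi\colon \bbB G \to \bbB H$ together with a smooth map $f\colon \R^n \to \R^m$, with the functor in question sending $(\psi,f)$ to the product morphism $\psi \times f$.

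Next I would split the target using that $\bbB H \times \R^m$ is a product:
\[
\Map_{\Orbfld}(\bbB G \times \R^n, \bbB H \times \R^m) \simeq \Map_{\Orbfld}(\bbB G \times \R^n, \bbB H) \times \Map_{\Orbfld}(\bbB G \times \R^n, \R^m),
\]
matching the product decomposition of the source mapping space. Under this splitting the comparison map becomes the product of (a) the restriction-along-the-projection map $\Map_{\Orbfld}(\bbB G, \bbB H) \to \Map_{\Orbfld}(\bbB G \times \R^n, \bbB H)$ and (b) the map $\Map_{\Eucl}(\R^n, \R^m) \to \Map_{\Orbfld}(\bbB G \times \R^n, \R^m)$ sending $f$ to $f \circ \pr_{\R^n}$; so it suffices to show that (a) and (b) are equivalences.

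For (a), I would pass to the Yoneda embedding $\Orbfld \hookrightarrow \Shv(\Orbfld)$, after which the map is just $\Map_{\Shv(\Orbfld)}(-, \bbB H)$ applied to the projection $\bbB G \times \R^n \to \bbB G$; since $\bbB H$ is a homotopy invariant sheaf by \Cref{cor:BG_Homotopy_Invariant}, this functor inverts every projection $\Xx \times \R \to \Xx$, hence, writing $\R^n$ as an iterated product of copies of $\R$, every projection $\Xx \times \R^n \to \Xx$. For (b), I would use that $\bbB G \times \R^n$ is the quotient stack $\R^n \quot G$ for the trivial $G$-action, whose atlas $\R^n \twoheadrightarrow \bbB G \times \R^n$ is split by $\pr_{\R^n}$; since $\R^m$ is $0$-truncated, descent along this atlas identifies $\Map_{\Orbfld}(\R^n \quot G, \R^m)$ with the set of $G$-invariant smooth maps $\R^n \to \R^m$, which — the action being trivial — is all of $C^\infty(\R^n, \R^m) = \Map_{\Eucl}(\R^n, \R^m)$, and under this identification (b) is the identity. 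The only place needing real care is the bookkeeping in the first paragraph, namely checking that the functor genuinely is ``$\psi \times f$'' on morphisms and that the comparison map really decomposes as the product of (a) and (b); once that is set up, (a) is immediate from homotopy invariance of $\bbB H$ and (b) from descent along the atlas, so I do not expect a serious obstacle.
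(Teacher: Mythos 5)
Your proposal is correct and follows essentially the same route as the paper: both decompose the mapping space of the target product and reduce to the two maps (a) and (b), handling (a) by homotopy invariance of $\bbB H$ and (b) by descent along the atlas $\R^n \twoheadrightarrow \R^n \quot G$ (the paper phrases this as the equalizer coming from the simplicial nerve of $G$, which is the same computation). No gaps.
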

	\begin{proof}
		Given objects $(\bbB G, \R^n)$ and $(\bbB H, \R^m)$, we may equivalently show that the maps
		\[
		\Hom_{\Orbfld}(\bbB G, \bbB H) \to \Hom_{\Orbfld}(\bbB G \times \R^n, \bbB H)
		\]
		and
		\[
		\Hom_{\Orbfld}(\R^n, \R^m) \to \Hom_{\Orbfld}(\bbB G \times \R^n, \R^m)
		\]
		are equivalences. For the first map, this is immediate from homotopy invariance of the stack $\bbB H$, see \Cref{cor:BG_Homotopy_Invariant}. For the second map, we may write $\bbB G$ as the colimit in $\Orbfld$ of the simplicial nerve of $G$ to obtain
		\[
		\Hom_{\Orbfld}(\bbB G \times \R^n, \R^m) \iso \mathrm{eq}\left(\Hom_{\Orbfld}(\R^n, \R^m) \rightrightarrows \Hom_{\Orbfld}(\R^n \times G, \R^m)\right).
		\]
		Since this is an equalizer of sets of two identical maps, this equalizer agrees with $\Hom_{\Orbfld}(\R^n, \R^m)$, finishing the proof.
	\end{proof}
	
	\begin{proposition}
		\label[proposition]{prop:Comparison_Sati_Schreiber}
		The inclusion $\Glo_{\Fin} \times \Eucl \hookrightarrow \Orbfld$ is both a continuous and cocontinuous morphism of sites. In particular, restriction and left/right Kan extension along this inclusion define adjunctions
		\begin{equation*}
			\begin{tikzcd}[column sep = large]
				\bH \simeq \Shv(\Glo_{\Fin} \times \Eucl)
				\ar[rr,hook,shift right=18pt]
				\ar[rr,hook,shift right=-18pt]
				\ar[rr,phantom,shift right=9pt,"{\scalebox{.6}{$\bot$}}"]
				\ar[rr,phantom,shift right=-9pt,"{\scalebox{.6}{$\bot$}}"]
				&&
				\Shv(\Orbfld).
				\ar[ll]
			\end{tikzcd}
		\end{equation*}
	\end{proposition}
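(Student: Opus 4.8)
The plan is to check the two conditions making the fully faithful functor $j\colon \Glo_{\Fin} \times \Eucl \to \Orbfld$ of \Cref{lem:Sati_Schreiber_Fully_Faithful}, which sends $(\bbB G, \R^n)$ to $\bbB G \times \R^n$, a continuous and cocontinuous morphism of sites; the three functors and two adjunctions are then produced by the standard formalism. The only geometric input needed is the fact, already used in the proof of \Cref{prop:HFixedPointsPreservesEverything} and in the lemma preceding this proposition, that every open substack of $\bbB G \times M$ has the form $\bbB G \times U$ for a unique open $U \subseteq M$. Consequently the open covers of the orbifold $\bbB G \times \R^n$ are, up to refinement, precisely the families $\{\bbB G \times U_i \hookrightarrow \bbB G \times \R^n\}$ coming from open covers $\{U_i\}$ of $\R^n$, which we may always take to consist of Euclidean opens.

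For cocontinuity I would fix an object $(\bbB G, \R^n)$ and a covering sieve $R$ of $\bbB G \times \R^n$ in $\Orbfld$, and show that $j^{-1}(R)$ is a covering sieve of $(\bbB G, \R^n)$. By the geometric fact, $R$ contains all maps into $\bbB G \times \R^n$ that factor through a member of some family $\{\bbB G \times U_i \hookrightarrow \bbB G \times \R^n\}$ with $\{U_i\}$ an open cover of $\R^n$ by Euclidean opens; hence $j^{-1}(R)$ contains every inclusion $(\bbB G, W) \hookrightarrow (\bbB G, \R^n)$ with $W$ a Euclidean open contained in some $U_i$, and these $W$ cover $\R^n$. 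Since the topology on $\Glo_{\Fin}$ is trivial, such a family is a covering family in the product site, so $j^{-1}(R)$ is a covering sieve.

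For continuity I would show that restriction along $j$ preserves sheaves. For $F \in \Shv(\Orbfld)$ we have $(j_* F)(\bbB G, \R^n) = F(\bbB G \times \R^n)$, and as $\Glo_{\Fin}$ carries the trivial topology it is enough to verify Čech descent for $F(\bbB G \times -)$ along each open cover $\{U_i\}$ of $\R^n$ by Euclidean opens. By the geometric fact, $\bbB G \times (U_{i_0} \cap \dots \cap U_{i_k})$ is the iterated fiber product of the $\bbB G \times U_{i_j}$ over $\bbB G \times \R^n$, so this descent statement is exactly the sheaf condition for $F$ along the open cover $\{\bbB G \times U_i\}$. With $\Eucl$ equipped with a differentiably-good-cover topology these intersections are again Euclidean and there is nothing more to say; with the naive open-cover topology one invokes the usual comparison-of-sites argument (equivalently, passes through the equivalence $\Shv(\Eucl) \iso \Shv(\Mfld)$) to match Euclidean-indexed Čech descent with the full sheaf condition.

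Granting continuity and cocontinuity, the adjunctions are formal: $j_*\colon \Shv(\Orbfld) \to \bH$ preserves limits between presentable $\infty$-categories and so has a left adjoint $j^*$, realized as the sheafification of the presheaf-level left Kan extension along $j$, while cocontinuity says the presheaf-level right Kan extension already lands in sheaves, giving a further right adjoint $j^!$ of $j_*$. Both $j^*$ and $j^!$ are fully faithful, since Kan extension along the fully faithful functor $j$ is fully faithful at the level of presheaves and this survives sheafification because $j_*$ commutes with sheafification, forcing $j_* j^* \simeq \mathrm{id}$ and $j_* j^! \simeq \mathrm{id}$ on $\bH$. I do not expect a real obstacle here: the substantive content is \Cref{lem:Sati_Schreiber_Fully_Faithful}, already established, and what remains is a routine site-theoretic verification whose only potential fussiness is the bookkeeping with Euclidean intersections noted above.
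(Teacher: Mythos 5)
Your proposal is correct and follows essentially the same route as the paper: the paper's proof is a two-sentence verification that the inclusion preserves open coverings (continuity) and that every open cover of $\bbB G \times \R^n$ is of the form $\{\bbB G \times U_i\}$ and can be refined by Euclidean opens (cocontinuity), with the adjunctions then being formal. Your additional care about Čech descent when intersections of Euclidean opens fail to be Euclidean is a genuine subtlety that the paper silently elides (it handles the analogous point for $\EuclStk$ earlier via the basis/hypercompleteness comparison), but it does not change the argument.
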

	\begin{proof}
		It is clear that the inclusion preserves open coverings, and hence is continuous. For cocontinuity, note that any open cover of the orbifold $\bbB G \times \R^n$ consists of orbifolds of the form $\bbB G \times U$ for open subsets $U \subseteq \R^n$, which may be refined to an open cover consisting of objects in $\Glo_{\Fin} \times \Eucl$ by covering the subsets $U$ by Euclidean subspaces.
	\end{proof}
	
We conclude by fleshing out the relationship between $\bH$ and $\Shv(\Orbfld)$ exhibited in the preceding proposition. The $\infty$-topos $\bH$ is a product $\infty$-topos (in the category of $\infty$-topoi and geometric morphisms, or, equivalently, Lurie's tensor product in $\PrL$)
$$
\bH \simeq \Shv(\Mfld) \otimes \Glo\Spc_{\Fin},
$$
and the functor $\Shv(\Orbfld)\to\bH$ of Proposition~\ref{prop:Comparison_Sati_Schreiber} is the constituent pushforward of a geometric morphism into this product. It is therefore natural to ask for its components, which are given by the previously introduced functors
$$
w_* :  \Shv(\Orbfld)\to\Shv(\Mfld)
\qquad\text{and}\qquad
i_* :  \Shv(\Orbfld)\to\Glo\Spc_{\Fin}.
$$
		
The essential image of the inclusion $\Glo_{\Fin} \times \Eucl \hookrightarrow \Orbfld$ from \Cref{lem:Sati_Schreiber_Fully_Faithful} is spanned by those Euclidean stacks $\R^n\quot G$ for which the $G$-action on $\R^n$ is trivial, so that we obtain the inclusions of sites $\Mfld \subseteq \Glo_{\Fin} \times \Eucl  \subseteq \Orbfld$. The three $\infty$-toposes $\Shv(\Mfld) \subseteq \bH \subseteq \Shv(\Orbfld)$ all contain $\Orbfld$ as a full subcategory, and (similar to the discussion in the introduction for $\SepStk$), may be viewed as sheaves on $\Orbfld$, with different topologies. These topologies essentially characterise which types of orbifold singularities are taken as primitive (in a somewhat different direction to varying $\Ff$).  

Like $\Shv(\Orbfld)$, the $\infty$-topos $\bH$ is cohesive over $\Glo\Spc_{\Fin}$, and the restriction of the functor $i_!: \Shv(\Orbfld) \to \PSh(\Glo_{\Fin})$ to $\Shv(\Mfld)$ coincides with the restriction of the corresponding left adjoint $\operatorname{Shp}:  \bH \to \PSh(\Glo_{\Fin})$ to $\Shv(\Mfld)$. The $\infty$-topos $\bH$ is moreover cohesive over $\Shv(\Mfld)$, i.e., the functor $\Shv(\Mfld) \to \bH$ corresponding to $w^*$, admits a finite product preserving left adjoint. This functor sends any orbifold to its $0$-truncation. While this functor is well-defined in our setting as well, we do not expect it to form a left adjoint to $w^*$ (in fact, we have no reason to believe that $w^*$ admits a left adjoint at all). 
	
	\printbibliography[heading=bibintoc]
	
	\Addresses
	
\end{document}